\def\n{\nabla}
\def\intl#1{\int\limits_{#1}}
\def\intll#1#2{\int\limits_{#1}^{#2}}
\def\dm{|\hskip-0.05cm|}
\def\OO{\Omega}
\def\displ{\displaystyle}
\def\VSE{\vspace{6pt}\\&\displ }
\def\VS{\vspace{6pt}\\\displ }
\def\rf#1{{\rm(\ref{#1})}}
\def\chiu{\hfill$\displaystyle\vspace{4pt}
\underset\Box\null$\par}
\def\R{\Bbb R}
\def\N{\Bbb N}
\def\à{\`{a}}
\def\vep{\varepsilon}
\def\be{\begin{equation}}
\def\ba{\begin{array}}
\def\ea{\end{array}}
\def\ee{\end{equation}}
 \def\ov{\overline}
\def\po{{\partial\Omega}}
\def\é{\'{e}}
\font\sc=cmcsc10
\title{\Large \bf On the $L^p$-$L^q$ estimates     of the gradient   of  solutions\\ to the Stokes problem}
\author{\sc   Paolo Maremonti
\thanks{Dipartimento di Matematica e Fisica,  
Universit\`{a} degli Studi della Campania
``L. Vanvitelli'', via Vivaldi 43, 81100 Caserta,
 Italy.
paolo.maremonti@unicampania.it\newline\null\quad\; The research  is partially supported by GNFM (INdAM) and by MIUR via the PRIN 2017 {\it ``Hyperbolic Systems of Conservation Laws and Fluid Dynamics: Analysis and Applications''.}}}
\date{}
\begin{document}
\markboth{\footnotesize\rm P.
Maremonti} {\footnotesize\rm
On $L^p$-$L^q$-estimates of the gradient of   solutions to the Stokes problem}
\maketitle \noindent{\bf Abstract} - {\small The paper is concerned with estimates of the gradient of the solutions  to the Stokes IBVP both in a bounded and in an exterior domain. More precisely, we look for estimates of the kind $\dm \n v(t)\dm_q \leq g(t)\dm \n v_0\dm_p\,,\;q\geq p>1\,,$ for all $t>0$ where function $g$ is independent of $v_0$.} 
\vskip 0.2cm
 \par\noindent{\small Keywords:  Stokes problem,   semigroup properties. }
  \par\noindent{\small  
  AMS Subject Classifications: 35B45, 35Q30, 76D07.}  
 \par\noindent
 \vskip -0.7true cm\noindent
\newcommand{\red}{\protect\bf}
\renewcommand\refname{\centerline
{\red {\normalsize \bf References}}}
\newtheorem{ass}
{\bf Assumption} 
\newtheorem{defi}
{\bf Definition} 
\newtheorem{tho}
{\bf Theorem} 
\newtheorem{rem}
{\sc Remark} 
\newtheorem{lemma}
{\bf Lemma} 
\newtheorem{coro}
{\bf Corollary} 
\newtheorem{prop}
{\bf Proposition} 
\renewcommand{\theequation}{\arabic{equation}}
\setcounter{section}{0}
\section{Introduction}\label{intro}
 We consider the Stokes initial boundary value problem in a   domain $\OO\subseteq\R^n$, $n\geq2$, that can be assumed bounded or   exterior, whose boundary $\po$ is supposed to be smooth:
\be\label{STI}\ba{l}v_t-\Delta v=-\nabla
\pi_v,\;\nabla\cdot v=0,\mbox{ in
}(0,T)\times\OO,\vspace{3pt}\\v=0\mbox{
on
}(0,T)\times\po,\vspace{3pt}\\
v=v_0\mbox{ on }\{0\}\times\OO. \ea\ee Several authors (see e.g. \cite{BM-I}-\cite{BS}, \cite{DS-I}-\cite{DHP},\cite{FKS}-\cite{FS},\cite{Gi}-\cite{GS},\cite{I},\cite{MS-II},\cite{U}-\cite{Y}) have contributed to the study of  semigroup properties of the Stokes operator associated to problem \rf{STI}, and of the related $L^p$-$L^q$ estimates of  solutions. In particular,    for $q\in[p,\infty]$, set $\mu:=\frac n2\left(\frac1p-\frac1q\right)$, the following hold:
\be\label{JPAIn} \ba{lll}\dm v(t )\dm
_q\!\leq c\dm v(s)\dm _p(t\!-\!s)^{-\mu},&
 \mbox{for all }t-s>0;\vspace{4pt}\\\dm
\nabla v(t )\dm _q\!\leq c\dm v(s)\dm
_p(t\!-\!s)^{-\mu_1},\hskip-0.2cm&\hskip-0.1cm
\mu_1\hskip-0.1cm:=\!\!\left\{\hskip-0.2cm\ba{ll}\frac12\!+\!\mu\hskip-0.4cm
&\mbox{if }t\!-\!s\in(0,1],\\\frac12\!+\!\mu
&\mbox{if }t\!-\!s>\!0,q\!\in\![p,n],
\\\frac n{2p}&\mbox{if }t\!-\!s>1\,,q\geq n\,;\ea\right.
\vspace{4pt}\\\dm v_t(t )\dm _q\!\leq
c\dm v(s)\dm _p(t\!-\!s)^{-\mu_2},\;
&\hskip-0.1cm\mu_2\hskip-0.1cm:=1+\mu,\mbox{ for all }t-s>0;\vspace{4pt}\\\dm D^2v(t )\dm_q\!\leq c \dm v(s)\dm_p(t\!-\!s)^{-\mu_3}, \hskip-0.1cm&\hskip-0.1cm\mu_3\hskip-0.1cm:=\!\!\left\{\hskip-0.2cm\ba{ll}1\!+\!\mu\hskip-0.1cm&\mbox{if }t\!-\!s\in(0,1],\\1\!+\!\mu& \mbox{if }t\!-\!s\!>0,q\!\in\![p,\hskip-0.05cm\frac n2],
\\\frac{n}{2p}&\mbox{if }t\!-\!s>1, q\geq\frac n2\,;\ea\right.
\ea\ee
where the constant $c$ is
independent of $v$ and, in a suitable sense,  the
exponents $\mu, \,\mu_i,\,i=1$-$3$, are sharp (see Lemma\,\ref{JPA} below and related references). More recently, also   the case of the initial data in $L^\infty(\OO)$ has   been considered   by some authors (see \cite{AG-I}-\cite{BH},\cite{CC},\cite{HM},\cite{M},\cite{SMM}-\cite{Sll}). In particular for $n\ge3$ the following estimates hold: \be\label{PE}\ba{l}\dm v(t)\dm_\infty\leq c\dm v(s)\dm_\infty\,,\quad t-s>0\,,\VS  \dm \n v(t)\dm_\infty\leq c\dm v(s)\dm_\infty (t-s+1)^\frac12(t-s)^{-\frac12}\,,\quad t-s>0\,,\VS\dm v_t(t)\dm_\infty\leq c\dm v(s)\dm_\infty(t-s)^{-1}\,,\quad t-s>0\,,\ea\ee where the constant $c$ is independent of $v_0$ and again the estimate  \rf{PE}$_2$ for $\n v$ is sharp (see \cite{HM}).\par The aim of this paper is to study        $L^p$-$L^q$-norm of the gradient of the solutions, that is, we look for estimates of the kind, $q\geq p$ and $p\in (1,\infty)$,\be\label{GI}\dm \n v(t)\dm_q\leq g(t)\dm \n v_0\dm_p\,, \mbox{ for all }t>0\,,\ee where $g(t)$ is independent of $v_0$.
As far as we know, the literature related to the previous question is not wide. In the case of the Stokes operator, for any domain which is sufficiently regular, estimate \rf{GI} holds for $p=2$ (see  e.g. \cite{Hy-I,Hy-II}). Moreover, for all $p\in(1,\infty)$, making use of the representation formula of the solutions, estimate \rf{GI} holds in the case of solutions to the Cauchy problem and of the IBVP in the half-space \cite{MSh-II}, and recently, in the interesting paper \cite{HK}, the result is achieved for $p=\infty$ (see Proposition\,3.2). Even the heat equation  has only few results.  In \cite{En,EnKL}, for solutions to the $\frak p$-parabolic equation, the authors obtain    some special results which are related to some bounded domains. More precisely, in \cite{En} the author  considers the heat equation (that is $\frak p=2$), with homogenous Dirichlet boundary condition  or homogeneous Nuemann boundary condition, and  proves that the function $e^{\lambda_pt}\dm \n u(t)\dm_p$ is non increasing, for all $p\in(1,\infty)$. The constant $\lambda_p$ is the minimal eigenvalue of a suitable boundary value problem associated to $-\Delta$, where $\lambda_p$ can be negative (e.g. if $\OO\subset\R^2$ is multiconnected). Finally,   estimate \rf{GI}   is proved with $g(t)=c$ in the case of $q=p\in [\ov p/(\ov p-1),\ov p]$, for a suitable $\ov p>2$. In the paper  \cite{EnKL} there is an extension  of the results proved in \cite{En} to the solutions to the $\frak p$-parabolic heat equation.\par Before going into   the results of this paper, we point out that, beyond the intrinsic interest related to the Stokes problem,   the paper is motivated by the fact that the results allow  us to extend   to  the three-dimensional initial boundary value problem some results of the ones obtained in \cite{MSh-I, MSh-II} for the 2D-Navier-Stokes, in particular furnishing weak solutions for non decaying data.    \par In order to state our chief result we introduce the set of the hydrodynamic test functions $\mathscr C_0(\OO)$ and, for $p\in (1,\infty)$, $J^p_0(\OO):=$completion of $\mathscr C_0(\OO)$ with respect to the seminorm $\dm\n\cdot\dm_p$ (norm for $p\in(1,n))$.    
\par We are able to prove \begin{tho}\label{CT}{\sl Let $\OO$ be a bounded domain.  Let $v_0\in J_0^p(\OO)$. Then there exists a unique solution to problem \rf{STI} such that, for all $T>0$, \be\label{CT-oi}v\in C([0,T;J^p_0(\OO)),\; t^\frac12v_t,\, t^\frac12D^2v,\,t^\frac12 \n\pi_v\in L^\infty(0,T;L^p(\OO))\,.\ee For   $q\geq p$, set $\mu:=\frac n2\left(\frac1p-\frac1q\right)$,   the following   hold with a constant $c$ independent of $v$:
\be\label{CT-ii}\ba{l}\dm \n v(t)\dm_q\!\leq  c (t-\sigma)^{-\mu}\!\exp[-\gamma (t\!-\!\sigma)]\,\dm \n v(\sigma)\dm_p\,,\mbox{\,for\,all } t>\!\sigma\!\geq0\,,\VS \dm v_t(t)\dm_q\leq c (t-\sigma)^{-\frac12-\mu}\!\exp[-\gamma (t\!-\!\sigma)]\,\dm \n v(\sigma)\dm_p\,, \mbox{\,for\,all } t>\!\sigma\!\geq0\,.\ea\ee}\end{tho}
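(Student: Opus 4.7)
The plan is to reduce the estimates \eqref{CT-ii} to the known $L^p$-$L^q$ bounds \eqref{JPAIn} for the Stokes semigroup $T(t)=e^{-At}$ by trading the gradient $\n$ for the fractional power $A^{1/2}$ of the Stokes operator $A$. The key ingredient is the Giga-type equivalence
\[ c_1\dm A^{1/2}u\dm_p\leq\dm\n u\dm_p\leq c_2\dm A^{1/2}u\dm_p\,,\quad p\in(1,\infty)\,, \]
valid on a bounded smooth $\OO$ for $u$ in the domain of $A^{1/2}$, together with the exponential decay $\dm T(t)w\dm_p\leq c\,e^{-\gamma t}\dm w\dm_p$, with $\gamma>0$ governed by the lowest eigenvalue of $A$; the latter, combined with \eqref{JPAIn}, upgrades the $L^p$-$L^q$ bound to $\dm T(t)w\dm_q\leq c\,t^{-\mu}e^{-\gamma t}\dm w\dm_p$.

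First I would note that Poincar\'e's inequality gives the continuous embedding $J^p_0(\OO)\hookrightarrow L^p_\sigma(\OO)$, so by classical $L^p$-theory there exists a unique mild solution $v(t)=T(t)v_0\in C([0,T];L^p_\sigma)$ of \eqref{STI} enjoying the regularity \eqref{JPAIn}. Estimate \eqref{CT-ii}$_1$ is then obtained, for $v_0\in\mathscr C_0(\OO)$, via the chain
\[ \dm\n v(t)\dm_q\lesssim\dm A^{1/2}T(t)v_0\dm_q=\dm T(t)A^{1/2}v_0\dm_q\leq c\,t^{-\mu}e^{-\gamma t}\dm A^{1/2}v_0\dm_p\lesssim t^{-\mu}e^{-\gamma t}\dm\n v_0\dm_p\,, \]
where the outer $\lesssim$ invoke the norm equivalence on $L^q$ and $L^p$ respectively, and the middle step uses the commutativity of $A^{1/2}$ with $T(t)$. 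Re-initializing at the instant $\sigma$ with datum $v(\sigma)$ yields the stated inequality for all $t>\sigma\geq0$.

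For \eqref{CT-ii}$_2$ I would rely on $v_t=-Av$ and on the factorization $AT(t)=A^{1/2}T(t/2)\cdot A^{1/2}T(t/2)$, combining the analytic semigroup bound $\dm A^{1/2}T(s)w\dm_q\leq c\,s^{-1/2}e^{-\gamma s}\dm w\dm_q$ with the previous step applied to the inner factor, to obtain
\[ \dm v_t(t)\dm_q\leq c\,t^{-1/2}e^{-\gamma t/2}\dm A^{1/2}T(t/2)v_0\dm_q\leq c\,t^{-1/2-\mu}e^{-\gamma t}\dm\n v_0\dm_p\,. \]
The statement $t^{1/2}v_t,\,t^{1/2}D^2v,\,t^{1/2}\n\pi_v\in L^\infty(0,T;L^p)$ then follows from the case $q=p$ of this last inequality by reading $-\Delta v+\n\pi_v=-v_t$ as a stationary Stokes problem with right-hand side in $L^p$ and invoking the associated elliptic regularity.

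For a general datum $v_0\in J^p_0(\OO)$, I would approximate with $v_{0k}\in\mathscr C_0(\OO)$ and use linearity together with the $q=p$ case of the estimate already established to show that $\{v_k\}$ is Cauchy in $C([0,T];J^p_0)$, its limit being the sought solution; uniqueness descends from the underlying $L^p$-theory. I expect the main technical hurdle to be the norm equivalence recalled at the start: while it is known on bounded smooth domains for every $p\in(1,\infty)$ (due to Giga), its careful invocation simultaneously for the exponents $p$ and $q$ is what unlocks the full range covered by the theorem.
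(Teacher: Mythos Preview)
Your argument is correct and takes a genuinely different route from the paper. The paper never invokes the equivalence $\dm A^{1/2}u\dm_p\simeq\dm\nabla u\dm_p$; instead it decomposes $v=U+u$, where $U$ solves the Stokes Cauchy problem in $\R^n$ with the zero-extended datum (Lemma~\ref{LSTC}) and $u$ solves the auxiliary IBVP \rf{STP} with homogeneous initial data and boundary data $-U$. The short-time bound $\dm\nabla v^k(t)-\nabla v^m(t)\dm_q\leq ct^{-\mu}\dm\nabla v_0^k-\nabla v_0^m\dm_p$ is obtained from the heat-kernel estimate \rf{STC-I}$_1$ for $U$ together with Lemma~\ref{DuL} (which rests on the Green identity \rf{GIF} with the adjoint solution), while for $t\geq1$ the paper simply quotes the bounded-domain decay \rf{BDC-I} and converts $\dm v_0^k-v_0^m\dm_p$ to $\dm\nabla v_0^k-\nabla v_0^m\dm_p$ via Poincar\'e. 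The passage to the limit and the continuity at $t=0$ use Lemma~\ref{CGLP} and Lemma~\ref{AES}.

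Your semigroup/fractional-power approach is considerably shorter and more transparent in the bounded case; the only delicate point is that the Giga equivalence must be applied with two different exponents $p$ and $q$, but since $v_0\in\mathscr C_0(\OO)$ lies in the common part of $D(A_p)$ and $D(A_q)$ and the various realizations of $A^{1/2}$ and $T(t)$ are consistent on smooth data, the chain you wrote is legitimate. The trade-off is portability: the paper's Green-identity machinery is set up precisely because it carries over, essentially unchanged, to the exterior-domain Theorem~\ref{CTE}, where $D(A_p^{1/2})$ no longer coincides with $J^{1,p}(\OO)$ for $p\geq n$ and your shortcut breaks down. In other words, your method buys brevity on bounded domains at the cost of the unified treatment that is the paper's real objective.
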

 \begin{tho}\label{CTE}{\sl Let $\OO$ be an   exterior domain.  Let $v_0\in J_0^p(\OO)$. Then there exists a unique solution to problem \rf{STI} such that, for all $T>0$, \be\label{CT-i}v\in C([0,T;J^p_0(\OO)),\; t^\frac12v_t,\, t^\frac12D^2v,\,t^\frac12\nabla\pi_v\in L^\infty(0,T;L^p(\OO))\,.\ee    For   $q\geq p$, set $\mu:=\frac n2\left(\frac1p-\frac1q\right)$,   the following    hold with a constant $c$ independent of $v$:
 \be\label{CT-iii}\dm \n v(t)\dm_q\leq cg_p(t-\sigma)\dm \n v(\sigma)\dm_p\,,\mbox{ for all }t>\sigma\geq0\,,\ee where  \be\label{CT-iv} g_p(t-\sigma):=\hskip-0.1cm\left\{\hskip-0.2cm\ba{lll}                \null\hskip0.2cm\frac1c\,,&t-\sigma>0\,,&\mbox{if }q=p=2\,,\\ (t-\sigma)^{-\mu},&t-\sigma\in(0,1)\,,&\null \\(t-\sigma)^{\frac12-\mu},&t-\sigma>1 \,,&\mbox{if } p\ne2\,,\,n=2\,,
 \\\log^\frac32(t-\sigma+e)\,,&t-\sigma\geq1\,,& \mbox{if }q=p=n=3\,,\\\log (t-\sigma+e)\,,&t-\sigma\geq1\,, &\hskip-0.24cm\left\{\hskip-0.2cm\ba{l}\mbox{if }q>p=n=3\,,\\\mbox{if }q\geq p=n>3\,, \ea\right.\\  (t-\sigma)^{\frac12-\frac n{2p}}\,,&t-\sigma\geq1\,,&\mbox{if }q\geq p> n\geq3   \,, \ea\right.  \ee and, for $n\geq2$, $p\in(1,n)$,
\be\label{CT-ivb}g_p(t-\sigma)\!:=\!\left\{\hskip-0.3cm\ba{lll}\null\hskip0.2cm1,&t-\sigma\!\geq1,&\mbox{if }q\geq p=2\,,\,n=2\,,\\(t-\sigma)^{-\mu}\,,&t-\sigma\geq\!1,&\mbox{if }q\in[p,n)\,,\,n>2\,,\\ (t-\sigma)^{\frac12-\frac n{2p}},&t-\sigma\geq\!1,&\mbox{if }q\!\geq\!n\,, \, p\in(1,n)\,,\,n\geq3\,,\,q\ne3\,,\\ (t-\sigma)^{\frac14-\frac3{4p}-\frac\vartheta2},&t-\sigma\geq1\,,&\mbox{if }q=n=3\,,\vartheta\in(0,\frac32\big[\frac1p-\frac13\big])\,.\ea\right.\ee
 Finally, for $q\geq p>1$, we have \be\label{TD}\dm v_t(t)\dm_q\leq c (t-\sigma)^{-\frac12-\mu} \dm \n v(\sigma)\dm_p\,, \mbox{\,for\,all }t>\!\sigma\!\geq0\,.\ee }\end{tho}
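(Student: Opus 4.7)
The plan is to reduce \rf{CT-iii}--\rf{CT-ivb} to the standard Stokes $L^p$--$L^q$ semigroup bounds \rf{JPAIn} by means of a Sobolev embedding applied to the initial datum, and then to deduce existence and uniqueness by approximation. For $q=p=2$, testing \rf{STI} with $Av$ gives the energy identity $\dm\n v(t)\dm_2^2+2\intll{\sigma}{t}\dm Av(\tau)\dm_2^2\,d\tau=\dm\n v(\sigma)\dm_2^2$, and hence \rf{CT-iv}$_1$. For $p\in(1,n)$, by completion of $\mathscr C_0(\OO)$ in the seminorm $\dm\n\cdot\dm_p$ and the Sobolev inequality, $v_0\in L^{p^*}(\OO)$ with $\dm v_0\dm_{p^*}\leq c\dm\n v_0\dm_p$, where $p^*:=np/(n-p)$. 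Applying \rf{JPAIn}$_2$ with initial exponent $p^*$ and target $q\geq p^*$ then gives
\[
\dm\n v(t)\dm_q\leq c(t-\sigma)^{-\frac12-\frac{n}{2}(\frac1{p^*}-\frac1q)}\dm v(\sigma)\dm_{p^*}\leq c(t-\sigma)^{-\mu}\dm\n v(\sigma)\dm_p\,,
\]
the arithmetic $\frac12+\frac n2(\frac1{p^*}-\frac1q)=\mu$ reproducing the short-time power in \rf{CT-ivb}$_2$; for $t-\sigma>1$ and $q\geq n$, the slower exterior-domain rate $n/(2p^*)=n/(2p)-\frac12$ yields the exponent $\frac12-n/(2p)$ of \rf{CT-ivb}$_3$. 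For $q\in[p,p^*)$, I interpolate $L^q$ between $L^{p^*}$ and $L^p$, the $L^p$-endpoint being obtained by splitting $(\sigma,t)$ in half and combining \rf{JPAIn} with Sobolev on the first half.

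The super-critical range $p\geq n$ needs a different reduction. When $p>n$, Morrey gives $v_0\in L^\infty(\OO)$ with $\dm v_0\dm_\infty\leq c\dm\n v_0\dm_p$, and combining \rf{PE}$_2$ with interpolation against a sub-critical bound recovers the power $\frac12-\frac n{2p}$ of \rf{CT-iv}$_6$. At the borderline $p=n$, one decomposes $v_0=v_0^{(1)}+v_0^{(2)}$, with $v_0^{(1)}$ of small $L^{n-\varepsilon}$-norm and $v_0^{(2)}$ smooth of compact support, applies the sub-critical estimate to each piece, and sums dyadically over $\varepsilon$; the logarithmic factors in \rf{CT-iv}$_{4,5}$ are precisely the price paid for this $\varepsilon$-loss. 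The most delicate case is the critical target $q=n=3$ with $p\in(1,3)$: since $p^*$ may exceed $3$, the direct sub-critical reduction fails at the target, and $L^3$ must be interpolated between $L^{p^*}$ and a higher $L^{q_0}$ to which the slow exterior-domain rate of \rf{JPAIn}$_2$ still applies; the free parameter $\vartheta$ in \rf{CT-ivb}$_4$ is precisely this interpolation exponent, with its admissible range $(0,\frac32(\frac1p-\frac13))$ dictated by requiring both endpoint decays to remain usable and the underlying Sobolev chain to close.

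The time-derivative bound \rf{TD} follows by the same reduction, applying \rf{JPAIn}$_3$ on $[(\sigma+t)/2,t]$ to data sent by Sobolev to $L^{p^*}$. Finally, existence, uniqueness and the regularity \rf{CT-i} are obtained by approximating $v_0\in J^p_0(\OO)$ in the $\dm\n\cdot\dm_p$-seminorm by a sequence $\{v_{0k}\}\subset\mathscr C_0(\OO)$, invoking the classical Stokes theory for each $v_{0k}$, and using the gradient estimate of the first paragraph at $q=p$ to show that $\{v_k\}$ is Cauchy in $C([0,T];J^p_0(\OO))$; the remaining spaces in \rf{CT-i} follow from the corresponding estimates applied to the differences $v_k-v_j$, while uniqueness follows from the same estimates applied to the difference of two candidate solutions. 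The principal obstacle is the critical case $q=n=3$, $p\in(1,3)$, where the natural Sobolev reduction through $p^*$ breaks down at the target and the interpolation chain must be tuned to deliver a valid decay rate for every $\vartheta$ in the prescribed open interval.
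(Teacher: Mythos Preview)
Your reduction via Sobolev embedding is correct in a limited range but has a genuine gap that makes the overall strategy unworkable. The fatal point is the supercritical case $p>n$: the claimed Morrey inequality $\dm v_0\dm_\infty\leq c\dm\n v_0\dm_p$ is \emph{false} on an unbounded domain. A scaling argument shows this immediately: for $v_\lambda(x):=v(x/\lambda)$ one has $\dm v_\lambda\dm_\infty=\dm v\dm_\infty$ while $\dm\n v_\lambda\dm_p=\lambda^{(n-p)/p}\dm\n v\dm_p\to0$ as $\lambda\to\infty$. So for $p\geq n$ there is no Lebesgue norm $\dm v_0\dm_s$ controlled by $\dm\n v_0\dm_p$ alone, and your reduction to \rf{JPAIn} or \rf{PE} cannot start. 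The proposed $p=n$ decomposition inherits the same defect: there is no reason $v_0\in J_0^n(\OO)$ should lie in any $L^{n-\varepsilon}(\OO)$.

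Even in the subcritical range $p\in(1,n)$ there is a second gap. Sobolev gives $v_0\in J^{p^*}(\OO)$ and the reduction works cleanly for $q\geq p^*$, recovering exactly the exponents in \rf{CT-ivb}$_{2,3}$. But for $q\in[p,p^*)$, and in particular for the endpoint $q=p$ needed for the continuity in $J_0^p(\OO)$ stated in \rf{CT-i}, you propose interpolating against an $L^p$-gradient bound obtained by ``splitting $(\sigma,t)$ in half''. This does not close: from $v_0\in J^{p^*}$ the semigroup estimates \rf{JPAIn} only propagate to $L^q$ with $q\geq p^*$, never down to $L^p$ on an exterior domain, so the required $\dm\n v(t)\dm_p\leq c\dm\n v_0\dm_p$ is not available by this route.

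The paper avoids both obstacles by a different mechanism: it writes $v=U+u$, where $U$ solves the Cauchy problem on $\R^n$ with $v_0$ extended by zero, so that the heat-kernel representation gives the gradient-to-gradient bound $\dm\n U(t)\dm_q\leq c(t-\sigma)^{-\mu}\dm\n U(\sigma)\dm_p$ directly for all $q\geq p$, while $u$ solves \rf{STI} with zero initial data and boundary datum $-U$ and is estimated through the Green identity against the adjoint problem, using trace bounds on the stress tensor $T(\widehat\varphi,\pi_{\widehat\varphi})$ and on $U|_{\po}$. The cases $p\geq n$ are handled not by any embedding of $v_0$ but by controlling the boundary trace of $U$ (which is small because $v_0$ vanishes on $\po$), and the logarithms in \rf{CT-iv}$_{4,5}$ arise from the borderline decay of these boundary integrals rather than from an $\varepsilon$-dyadic sum.
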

  The following result ensures that in  a suitable sense   the estimates \rf{CT-iii}   for $t \geq1$ with $g_p$ defined by \rf{CT-ivb}$_{2}$-\rf{CT-ivb}$_4$ are sharp in suitable sense and that $g_p$ in  \rf{CT-iv}$_5$-\rf{CT-iv}$_6$ for $q>p$    cannot be substituted with a $\xi(t)$ such that $t^{-1}\xi(t)\in L^1(t_0,\infty)$, $t_0\geq1$.\begin{prop}\label{CCT}{\sl For the solutions of Theorem\,\ref{CTE}, we get\begin{itemize}\item[i.]for $n>2$ and $p\in[\frac n2,n)$,  estimate \rf{CT-iv} with $g_p$ defined by \rf{CT-ivb}$_{2,3}$     is sharp,  in the sense that  
there is no function $\xi(t)$ such that 
\be\label{CT-vi}\ba{l}\null\hskip0.5cm\xi(t)t^{-1}\in L^1(t_0,\infty)\mbox{ and }\dm \n v(t)\dm_{L^q(\OO\cap S_R)} \leq \xi(t)\dm \n v_0\dm_p\left\{\hskip-0.15cm\ba{l}t^{-\mu} \,,\\ t^{\frac 12-\frac{n}{2p}}\,, \ea\right.\\\mbox{\hskip-0.72cm and for }q=n=3,\,\delta\in(0,\frac32\big[\frac1p-\frac13\big]),\\\xi(t)t^{-1+\delta}\in L^1(t_0,\infty)\mbox{ and } \dm \n v(t)\dm_{L^3(\OO\cap S_R)}\leq \xi(t)\dm \n v_0\dm_pt^{\frac14-\frac3{4p}-\frac\vartheta2}\,,\ea\ee    where $R>diam(\OO^c)$ and $\xi$ are independent of $v$.
\item[ii.] for all $q\geq p\geq n>2 $ and $q\geq p>2,n=2$\,,   on the right hand side of the estimate \rf{CT-iv} the function $g_p$  cannot be substituted by  a   function $\xi(t)$ such that $\xi(t)=o(1)$.\end{itemize}
}\end{prop}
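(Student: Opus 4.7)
Both parts proceed by contradiction, combining the semigroup property of the Stokes operator with the asymptotic analysis underlying Theorem\,\ref{CTE}.

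For part (i), suppose that for some $p\in[\frac n2,n)$ there existed $\xi$ with $\xi(t)/t\in L^1(t_0,\infty)$ such that
\[
\dm\n v(t)\dm_{L^q(\OO\cap S_R)}\le\xi(t)\,\dm\n v_0\dm_p\,t^{-\mu}\qquad\forall\,v_0\in J^p_0(\OO).
\]
Integrability of $\xi/t$ forces $\liminf_{t\to\infty}\xi(t)=0$ (otherwise $\xi\ge\varepsilon$ eventually would make $\int\xi(t)t^{-1}dt=\infty$), so one extracts a sequence $t_k\to\infty$ with $\xi(t_k)\to0$, and the hypothesis implies $t_k^\mu\dm\n v(t_k)\dm_{L^q(\OO\cap S_R)}\to0$ for every $v_0\in J^p_0(\OO)$. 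It then suffices to exhibit \emph{one} $v_0\in J^p_0(\OO)$ with $\limsup_{t\to\infty}t^\mu\dm\n v(t)\dm_{L^q(\OO\cap S_R)}>0$. The natural candidate is a smooth, compactly supported, divergence-free $v_0$ with non-vanishing first moment $m:=\int_\OO v_0\,dx$: by the large-time asymptotic expansion of the exterior Stokes semigroup (the representation underlying \rf{JPAIn}), on the bounded truncation $\OO\cap S_R$ the field $\n v(t)$ is, to leading order, the gradient of a Stokes fundamental tensor contracted with $m$, whose magnitude on $\OO\cap S_R$ is precisely of order $t^{-\mu}$ (respectively $t^{\frac12-\frac n{2p}}$). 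The subcase $q=n=3$ is analogous, using the weaker hypothesis $\xi(t)t^{-1+\delta}\in L^1$ to extract $t_k$ with $\xi(t_k)t_k^\delta\to0$.

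For part (ii), suppose the function $g_p$ in the last two cases of \rf{CT-iv} admitted replacement by some $\xi(t)=o(1)$ uniformly in $v_0$. Applying the estimate with $v(\tau)$ as new initial datum and iterating on a dyadic sequence $\tau_k=2^k$ yields $\dm\n v(2^N)\dm_p\le\big(\prod_{k<N}\xi(2^k)\big)\dm\n v_0\dm_p\to0$; hence $\n v(t)\to 0$ in $L^p$ for every $v_0\in J^p_0(\OO)$. This is contradicted by exhibiting a $v_0\in J^p_0(\OO)$ whose gradient does not decay in $L^p$ — a construction available precisely in the borderline range $p\ge n\ge3$ (resp.\ $p>2$, $n=2$), where the Stokes semigroup fails to dissipate the gradient on the $L^p$-scale: concretely one takes a smooth solenoidal truncation of a Stokes fundamental tensor with a pressure-compatible boundary correction vanishing on $\po$, which belongs to $J^p_0(\OO)$ exactly in this range and for which the corresponding Stokes evolution produces a bounded trajectory whose $L^p$-gradient has a strictly positive limit.

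The main obstacle is the non-degeneracy step in part (i): one must verify that the explicit leading term of $\n v(t)$ on the bounded set $\OO\cap S_R$, built from the Stokes fundamental tensor and the first moment of $v_0$, is not identically zero on that set. This rests on a careful representation formula for the exterior-domain Stokes semigroup and a non-triviality check against the class of hydrodynamic test fields. The dyadic iteration, the extraction of the sequence $\{t_k\}$, and the non-decaying construction in part (ii) are then routine consequences of the $L^p$-$L^q$ theory already assembled in Theorem\,\ref{CTE}.
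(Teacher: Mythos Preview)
Your approach to both parts differs substantially from the paper's, and part~(i) has a genuine gap.

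\textbf{Part (ii).} Your dyadic iteration is correct but unnecessary. The paper's argument is one line: by Theorem~\ref{SSTPT} there exists a non-trivial \emph{steady} solution $(V,\pi_V)\in J_0^p(\OO)\times L^p(\OO)$ of the homogeneous Stokes system precisely when $p\ge n>2$ (or $p>2$, $n=2$). Since $V$ is time-independent it solves \rf{STI} with $v_0=V$, and the estimate would read $\dm\n V\dm_q\le \xi(t)\dm\n V\dm_p$ for all $t$; letting $t\to\infty$ with $\xi(t)=o(1)$ forces $\n V\equiv 0$, a contradiction. Your construction (``truncation of a Stokes fundamental tensor with boundary correction'') is essentially this same $V$, but you do not need to iterate anything once you observe that the counterexample is already stationary.

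\textbf{Part (i).} Here the paper does \emph{not} perform any large-time asymptotic expansion of $\n v(t)$ on $\OO\cap S_R$; instead it reduces everything to the sharpness result \rf{OPT-I}--\rf{OPT-II} of Lemma~\ref{AX} (itself proved by a duality argument against the steady exterior problem \rf{STS}, not by asymptotics). The reduction is a simple composition with the standard semigroup estimate \rf{JPA}$_2$: if \rf{CT-vi} held, then for $v_0\in\mathscr C_0(\OO)$ one writes
\[
\dm\n v(t)\dm_{L^q(\OO\cap S_R)}\le \xi(\tfrac t2)\,t^{\frac12-\frac n{2p}}\dm\n v(\tfrac t2)\dm_p\le c\,\xi(\tfrac t2)\,t^{\frac12-\frac n{2p}}\cdot t^{-\frac12-\frac n2(\frac2n-\frac1p)}\dm v_0\dm_{\frac n2}=c\,\xi(\tfrac t2)\,t^{-1}\dm v_0\dm_{\frac n2},
\]
which is exactly the forbidden estimate \rf{OPT-II} with $s=\frac n2$. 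The other cases are handled by the same two-step composition.

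Your proposed route --- extract $t_k$ with $\xi(t_k)\to 0$ and then exhibit a single $v_0$ whose local gradient has the exact order $t^{-\mu}$ on $\OO\cap S_R$ --- requires a precise leading-order expansion of the exterior Stokes semigroup near the boundary. You describe this as resting on ``a careful representation formula'' and a ``non-triviality check'', and you flag it as the main obstacle; but no such representation is developed in the paper, and it is not a routine consequence of the $L^p$-$L^q$ theory assembled in Theorem~\ref{CTE}. In short, the step you identify as the obstacle is the entire content of part~(i), and you have not supplied it. The paper's composition trick bypasses this difficulty completely by transferring the burden to Lemma~\ref{AX}.
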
 We assume  $\OO\subset\R^n$. Actually in the case of $\OO=\R^n$ the result of Theorem\,\ref{CT} is improved by estimate \rf{STC-I} of Lemma\,\ref{LSTC}. An analogous remark holds for the case of the half-space for which we refer to \cite{MSh-II} and, for $q=p=\infty$, to \cite{HK}.
 \par The paper is developed on the wake of the technique adopted in \cite{MS-II}. Actually  the  arguments are essentially based on the   Green identity \rf{GIF}  (below) related to the Stokes problem. Initially, we consider the solution $U$ to the Stokes Cauchy problem  \rf{STC} (below) with   data $v_0$ extended to zero in $\R^n-\OO$. The use of the representation formula simplifies the realization of the task for the field $U$.   Then we study the Stokes initial boundary value problem \rf{STP} (below)  related to $u:=v-U$ with homogeneous initial data and boundary data $-U$. This approach leads to   estimate  the function $u$ simply by making use of the Green identity \rf{GIF} (below) written by means of the adjoint problem. The Green identity  \rf{GIF}    involves only the boundary value, that is  the trace   on $(0,T)\times\po$ of the field  $U$, and the trace on $(0,T)\times \po$ of the stress tensor of the adjoint problem that, roughly speaking, obeys the usual $L^p$-$L^q$ estimates of solutions.\par By Theorem\,\ref{CT}, $\OO$ bounded domain, we completely realize our aims. Of course, our theorem include the results of paper \cite{En} related to the heat equation with Dirichlet boundary condition.\par By Theorem\,\ref{CTE}, $\OO$ exterior domain, if  we consider $p\in(1,n)$, $n>2$, the results are, roughly speaking, in line with expectations.  In the case of $n=2$ the result is weaker. \par For $p\geq n$, the $L^p$-$L^p$ estimates of Theorem\,\ref{CTE} furnish just a result of continuous dependence.  We are neither able to prove that   $L^p$-$L^p$ estimates    holds  with $g_p\in L^\infty(0,\infty)$   nor that they do not hold for a such $g_p$. In the case of $L^p$-$L^q$ estimates, for $t\in(0,1)$ the function $g_p(t)$ in \rf{CT-iii} has the right dimensional balance, for $t>1$, the  function $g_p(t)$ in \rf{CT-iii} is growing.  However Proposition\,\ref{CCT} ensures that no decay is possible\,\footnote{\,The present statement ii. of Proposition\,\ref{CCT} improves a result previously stated by the author. It has been achieved by the author during a conversation with Prof. G.P. Galdi. Actually, by a comment on the results of the paper G.P. Galdi implicitly makes to realize the proof of the statement ii.\,.}. Different is the case of $p\in[\frac n2,n)$, where the sharpness holds as the one expressed by means of   \rf{JPAIn}$_2$ and \rf{PE}$_2$ in the case of the ordinary $L^p-L^q$ estimates.         \par The plan of the paper is the following. In sect.\,\ref{NPR} we give some notation and preliminary results concerning the trace spaces and the Gagliardo-Nirenberg inequalities,  and the solutions to the Stokes problem. In sect. \ref{US} we study the Stokes Cauchy problem assuming $v_0\in J_0(\R^n)$. In sect. \ref{uS} we study a special auxiliary Stokes IBVP. In sect. \ref{CSTF}  we furnish some implications of the results proved in sect. \ref{US} and in sect. \ref{uS}. Finally in sect. \ref{PCT} we are able to furnish the proof of   Theorem\,\ref{CT}, Theorem\,\ref{CTE} and of Proposition\,\ref{CCT}.
\section{\label{NPR}Some notation and preliminary  results} The following    spaces of completion will be considered: $J^p(\OO):=$ completion of $\mathscr C_0(\OO)$ with respect to the $L^p$-norm, $J^{1,p}(\OO):=$ completion of $\mathscr C_0(\OO)$ with respect to the $W^{1,p}(\OO)$, and, as in the introduction, $J_0^p(\OO)$:=completion of $\mathscr C_0(\OO)$ with respect to $\dm\n\cdot\dm_p$\,. We refer the reader to \cite{Gl} Theorem\,6.1 (p.68) for some properties related to the functions belonging to $J_0^p(\OO)$. \par We adopt the notations: $(u,v):=\intl\OO u \cdot v\,dx$ and $(u,v)_{\po}:=\intl\po u\cdot v\,d\omega$, respectively to mean the integral on the domain $\OO$ and the one  on the surface $\po$ related to the product of two functions $u,v$\,. The normal to the boundary is denoted by $\nu$\,.  \par By the symbol $<h>^\lambda_r$ we mean the seminorm:
$$\lambda\in(0,1)\,,\quad\big(\intl \po\intl \po
\frac{\vert h(x)-h(y)\vert^r} {\vert
x-y\vert^{n-1+\lambda r}}d\omega_x
d\omega_y\big)^{\frac1r}\,,$$
that for $\lambda=1-\frac1r$ furnish the classical one of the trace space $W^{1-\frac1r,r}(\po)$, that we consider normed by the functional
$$\dm h\dm_{1-\frac1r,r}:=\dm h\dm_{L^r(\po)}+<h>^{1-\frac1r}_r\,.$$ We recall (see e.g \cite{EG}) that, for all Lipchitz domain  $D$ such that $\ov D\cap  \po=\po$,   we get 
\be\label{tr}\ba{l}\dm h\dm_{L^r(\po)}\leq c(\dm h\dm_{L^r(D)}+\dm h\dm_{L^r(D)}^{\frac1{r'\hskip-0.1cm\null}}\hskip0.1cm\dm \n h\dm_{L^r(D)}^\frac1r)\,,\VS \dm h\dm_{1-\frac1r,r}\leq c(\dm h\dm_{L^r(D)}+\dm \n h\dm_{L^r(D)})\,,\ea\ee
with $c$ independent of $h\in W^{1,r}(D)$\,. By the symbol $W^{-\frac1r,r}(\po)$ we mean the dual space of $W^{\frac1{r'\hskip-0.1cm\null}\hskip0.1cm,r'}(\po)$. \par We denote by $T(w,\pi_w)$ the newtonian  stress tensor for a soleinodal field, and recall that $\n\cdot T(w,\pi_w)=\Delta w-\n\pi_w$\,. \par A key tool in our proof is the Green identity. If   $(\varphi,\pi_\varphi)$ is a   solutions to system  \rf{STI}$_1$, we define $\widehat\varphi(\tau,x):=\varphi(t-\tau,x)$ and $\pi_{\widehat\varphi}:=\pi_\varphi(t-\tau,x)$. It is known that $(\widehat\varphi,\pi_{\widehat\varphi})$ is a solution to the adjoint problem on $(0,t)\times\OO$, that is
\be\label{AGPR}\ba{l}\widehat\varphi_\tau+\Delta\widehat\varphi-\nabla\widehat\varphi=0,\quad\nabla\cdot\widehat \varphi=0,\mbox{ in }(0,t)\times\OO,\VS \widehat\varphi=0\mbox{ on }(0,t)\times\po,\VS \widehat\varphi=\varphi_0\mbox{ on }\{t\}\times\OO
.\ea\ee Multiplying the first equation of $(u,\pi_u)$ by $\widehat\varphi$ , and after integrating by parts on $(s,t)\times\OO$, we get the Green identity:
\be\label{GIF}\ba{l}\displ \displ(u(t),\varphi(0))+\intll st\!(\widehat\varphi,\nu\!\cdot\! T(u,\pi_u))_\po d\tau  \\\displ\hskip 4.5cm=(u(s),\widehat\varphi(t-s))+\!\!\intll st\!(u,\nu\!\cdot\! T(\widehat\varphi,\pi_{\widehat\varphi}))_\po d\tau  , \ea\ee
where the symbol $\nu$ denotes the normal on $\po$ and $s\in[0,t)$.
  \par
We recall some results that will be crucial for our aims.
\begin{lemma}\label{GN}{\sl  Let $\OO$
be a bounded  domain with the cone
property.
 Let $m\in\N$
and let $r\in[1,\infty)$ and $q\in[1,\infty]$. Let
$u\in L^q(\OO)$ and, for
$|\alpha|=m$, $D^\alpha u\in
L^r(\OO)$. Then there exists a
constant $c$ independent of $u$
such that \be\label{GN-I} \dm D^\beta
u\dm_p\leq c \dm D^\alpha
u\dm_r^a\dm u\dm_q^{1-a}+c_0 \dm u\dm_q \,,\ee
provided that for $j:=|\beta|$ the
following relation
holds:\vskip0.2cm\par\noindent
\centerline{$\frac1p=\frac
jn+a\left(\frac 1r-\frac
mn\right)+(1-a)\frac1q\,,$}\vskip0.2cm\noindent
with $a\in[\frac jm,1]$ either if
$p=1$ or if $p>1$ and $m-j-\frac
nr\notin \N\cup\{0\}$, while $a \in
[\frac jm,1)$ if $p>1$ and
$m-j-\frac
nr\in\N\cup\{0\}$. Finally, if $u\in W^{m,r}_0(\OO)$, then we can set   $c_0=0$ in \rf{GN-I}.}\end{lemma}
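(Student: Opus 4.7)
The plan is to reduce the statement to the classical Nirenberg inequality on all of $\R^n$ via an extension argument, and to recover the $W^{m,r}_0$-case (with $c_0=0$) by zero-extension.

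\emph{Step one: the $\R^n$-case without the additive term.} For $u\in\mathscr C_0^\infty(\R^n)$, the inequality in the form
\[ \dm D^\beta u\dm_p \leq c\, \dm D^\alpha u\dm_r^{\,a}\, \dm u\dm_q^{\,1-a} \]
is Nirenberg's original theorem. The endpoint $a=j/m$ (and the dual endpoint $a=1$, when admissible) can be obtained by iterated one-dimensional Gagliardo estimates—integration by parts in each variable combined with H\"older's inequality and a mixed-norm bookkeeping—while intermediate $a\in(j/m,1)$ follows from the log-convexity of $L^p$-norms in $1/p$, i.e.\ a Hadamard three-line lemma applied to a one-parameter analytic family of interpolants. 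The exceptional subcase $m-j-\frac{n}{r}\in\N\cup\{0\}$ is precisely the borderline of the Sobolev embedding $W^{m-j,r}\hookrightarrow L^\infty$, which holds only up to a logarithmic loss; this is why $a=1$ must be excluded there.

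\emph{Step two: transfer to a bounded cone domain.} I would invoke a Calder\'on (or Stein) total extension operator $E:W^{m,r}(\OO)\cap L^q(\OO)\to W^{m,r}(\R^n)\cap L^q(\R^n)$ with $Eu|_\OO=u$ and $\dm Eu\dm_{W^{k,r}(\R^n)}+\dm Eu\dm_{L^q(\R^n)}\leq c(\dm u\dm_{W^{k,r}(\OO)}+\dm u\dm_{L^q(\OO)})$ for $0\leq k\leq m$; this operator exists because of the cone property. Applying step one to $Eu$ and restricting to $\OO$ yields
\[ \dm D^\beta u\dm_{L^p(\OO)}\leq c\bigl(\dm D^\alpha u\dm_{L^r(\OO)}+\dm u\dm_{W^{m-1,r}(\OO)}\bigr)^{\!a}\dm u\dm_{L^q(\OO)}^{\,1-a}. \]
The intermediate Sobolev norm on the right is then handled by induction on $m$: for each $j'<m$ one applies \rf{GN-I} again to estimate $\dm D^{\beta'}u\dm_r$, $|\beta'|=j'$, by $\dm D^\alpha u\dm_r^{a'}\dm u\dm_q^{1-a'}$ plus a further $\dm u\dm_q$ term. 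A Young-type absorption collects all these residues into the single additive contribution $c_0\dm u\dm_q$ appearing in \rf{GN-I}.

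\emph{Step three: the $W^{m,r}_0$-case.} If $u\in W^{m,r}_0(\OO)$, its zero-extension $\widetilde u$ lies in $W^{m,r}(\R^n)$ with $D^\alpha\widetilde u=\widetilde{D^\alpha u}$ a.e., so $\dm\widetilde u\dm_{L^q(\R^n)}=\dm u\dm_{L^q(\OO)}$ and $\dm D^\alpha\widetilde u\dm_{L^r(\R^n)}=\dm D^\alpha u\dm_{L^r(\OO)}$. Step one applied to $\widetilde u$ gives \rf{GN-I} directly with $c_0=0$.

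The main obstacle is step one: the identification of the exceptional range in which $a$ cannot reach $1$, and the accompanying log-convexity interpolation at the critical exponent. The rest of the argument amounts to standard extension and absorption machinery, so once the full-space statement is available in its full precision, steps two and three are essentially bookkeeping.
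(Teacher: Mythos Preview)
The paper does not actually prove this lemma: it is stated as the classical Gagliardo--Nirenberg interpolation inequality and used without proof (the subsequent Lemma~\ref{ICM} for exterior domains is likewise only cited, to \cite{CM}). So there is no ``paper's proof'' to compare against; the authors treat \rf{GN-I} as background.

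Your outline is the standard route and is essentially correct. One small technical caveat in Step~2: the Stein universal extension operator (bounded simultaneously on all $W^{k,r}$ and on $L^q$) is usually stated for Lipschitz domains, while the hypothesis here is only the cone property. For cone domains one typically either uses Calder\'on's extension (which covers $1<r<\infty$) or, closer to Nirenberg's own argument, works locally on cones and patches via a partition of unity rather than invoking a single global extension. This does not affect the logic of your reduction, but if you want the full range $r\in[1,\infty)$, $q\in[1,\infty]$ under the cone condition alone you should point to a reference (e.g.\ Adams--Fournier or Friedman) rather than to Stein's operator.
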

\begin{lemma}\label{ICM}{Let $\OO$
be an exterior domain with the cone
property.
 Let $m\in\N$
and let $q,r\in[1,\infty)$. Let
$u\in L^q(\OO)$ and, for
$|\alpha|=m$, $D^\alpha u\in
L^r(\OO)$. Then there exists a
constant $c$ independent of $u$
such that \be\label{CM} \dm D^\beta
u\dm_p\leq c\dm D^\alpha
u\dm_r^a\dm u\dm_q^{1-a}\,,\ee
provided that for $j:=|\beta|$ the
following relation
holds:\vskip0.2cm\par\noindent
\centerline{$\frac1p=\frac
jn+a\left(\frac 1r-\frac
mn\right)+(1-a)\frac1q\,,$}\vskip0.2cm\noindent
with $a\in[\frac jm,1]$ either if
$p=1$ or if $p>1$ and $m-j-\frac
nr\notin \N\cup\{0\}$, while $a \in
[\frac jm,1)$ if $p>1$ and
$m-j-\frac
nr\in\N\cup\{0\}$.}\end{lemma}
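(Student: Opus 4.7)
The plan is to reduce the exterior-domain estimate to the classical multiplicative Gagliardo--Nirenberg inequality on $\R^n$ combined with the bounded-domain statement of Lemma~\ref{GN}, and then to eliminate the parasitic additive term by a dilation argument that exploits the scale-invariance of the target inequality. Fix $R_0$ with $\R^n\setminus\OO\subset B_{R_0}$ and a cutoff $\phi\in C_c^\infty(B_{2R_0})$, $\phi\equiv 1$ on $B_{R_0}$. Split $u=\phi u+(1-\phi)u$: the first piece is supported in the bounded cone-type domain $\OO\cap B_{2R_0}$, to which Lemma~\ref{GN} applies; the second vanishes near the obstacle, so its zero-extension to $\R^n$ lies in $W^{m,r}(\R^n)\cap L^q(\R^n)$, where the purely multiplicative Nirenberg inequality holds.

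Combining the two estimates, expanding the Leibniz derivatives in $D^\beta(\phi u)$ and $D^\beta((1-\phi)u)$, and absorbing the lower-order terms $D^\gamma\phi\cdot D^{\beta-\gamma}u$ by iterated interpolation between $\dm D^\alpha u\dm_r$ and $\dm u\dm_q$ (a bootstrap using the same intermediate-derivative Gagliardo--Nirenberg), yields a preliminary estimate on $\OO$ of the form
\[
\dm D^\beta u\dm_p\le c\dm D^\alpha u\dm_r^a\dm u\dm_q^{1-a}+c\dm u\dm_q\,,
\]
that is, the target inequality plus an unwanted additive residual inherited from the $c_0\dm u\dm_q$ term in Lemma~\ref{GN}.

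To remove this residual I would apply the preliminary bound to $u_\lambda(x):=u(\lambda x)$ on the dilated exterior domain $\OO_\lambda:=\lambda^{-1}\OO$. The scaling identities $\dm D^\gamma u_\lambda\dm_{L^s(\OO_\lambda)}=\lambda^{|\gamma|-n/s}\dm D^\gamma u\dm_{L^s(\OO)}$, together with the dimensional relation $\frac1p=\frac jn+a(\frac1r-\frac mn)+(1-a)\frac1q$, force the $\lambda$-powers on the left and on the multiplicative term on the right to cancel identically, leaving
\[
\dm D^\beta u\dm_p\le c\dm D^\alpha u\dm_r^a\dm u\dm_q^{1-a}+c\,\lambda^{a(n/r-m-n/q)}\dm u\dm_q\,.
\]
Sending $\lambda\to\infty$ or $\lambda\to 0^+$ according to the sign of the exponent annihilates the residual. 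The only degenerate cases are $a=0$ (trivial, since then $\beta=0$ and $p=q$) and the Sobolev-critical relation $\frac1r-\frac1q=\frac mn$, for which the estimate follows directly from $W^{m,r}(\OO)\hookrightarrow L^q(\OO)$ (or as the limit from the non-critical range). The exceptional integer exponents $m-j-\frac nr\in\N\cup\{0\}$ in which the endpoint $a=1$ must be excluded are inherited verbatim from Lemma~\ref{GN}.

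The main obstacle is the a priori $\lambda$-dependence of the constants $c,c_0$ in Lemma~\ref{GN} when applied to the rescaled bounded piece of $\OO_\lambda$: the cone-property parameters of that piece (cone height, diameter) degenerate under dilation, so that a direct application of Lemma~\ref{GN} to $\OO_\lambda$ does not yield a bound uniform in $\lambda$. To neutralize this, one should not apply Lemma~\ref{GN} globally to the scaled domain, but rather on balls of a fixed reference radius covering $\OO$ (interior balls treated by the $\R^n$ inequality, boundary balls by the cone-adapted local version of Lemma~\ref{GN}), and then sum the local estimates by the discrete Hölder inequality for the $\ell^p$--$\ell^r$--$\ell^q$ triple. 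Providing this scale-uniform covering estimate, rather than the cutoff decomposition above applied naively to $\OO_\lambda$, is the technical heart of the argument.
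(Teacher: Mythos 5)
The paper does not actually prove Lemma~\ref{ICM}: it is taken verbatim from \cite{CM}, as the sentence following the statement says, so there is no in-paper argument to compare yours with and your proposal has to stand on its own. Its first half is sound: the cutoff at a fixed radius $R_0$, Lemma~\ref{GN} on $\OO\cap B_{2R_0}$, the multiplicative Nirenberg inequality on $\R^n$ for the zero-extended far part, and the reabsorption of the Leibniz terms do yield the inequality with the parasitic additive term $c_0\dm u\dm_q$ (the intermediate derivatives $D^\gamma u$, $0<|\gamma|<m$, are not assumed finite on the annulus, so a preliminary local step is needed before Leibniz can be invoked, but that is fixable). The genuine gap is in the second half. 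The dilation argument requires the preliminary inequality on $\OO_\lambda=\lambda^{-1}\OO$ with constants $c,c_0$ \emph{uniform in} $\lambda$, and this is not a technicality that a covering by fixed-radius balls restores: $\{\OO_\lambda\}$ is not a family of dilations of one fixed domain, because the obstacle $\OO^c$ has fixed size in $\OO$ and hence size of order $\lambda^{-1}$ in $\OO_\lambda$, shrinking to a point as $\lambda\to\infty$ (and blowing up if the sign of the exponent forces $\lambda\to0$). The additive term in Lemma~\ref{GN} is generated precisely by the neighbourhood of $\po$, where $u$ satisfies no boundary condition, and that region is pinned to the obstacle: it takes no part in the scale cancellation you invoke. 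Your covering fix therefore reduces the lemma to a Gagliardo--Nirenberg inequality on ``a ball minus a hole of radius $\lambda^{-1}$'' with constants uniform in the hole size --- a capacity-type statement essentially equivalent to the lemma itself, and certainly not supplied by a ``cone-adapted local version'' of Lemma~\ref{GN}, whose constants degenerate along this family.

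The structural point your argument misses is why the purely multiplicative inequality can hold at all on an exterior domain without boundary conditions, while it is false on a bounded domain: take $u$ a polynomial of degree $<m$ with $D^\beta u\ne0$ (possible when $j<m$); then the right-hand side of \rf{CM} vanishes while the left does not, so on a bounded domain the additive term is unavoidable. On an exterior domain such a polynomial is excluded only because it fails to lie in $L^q(\OO)$ with $q<\infty$ over an unbounded set. A correct proof must therefore use the unbounded component to control $u$ on the fixed neighbourhood of the obstacle --- for instance by bounding $\dm u\dm_{L^q(\OO\cap B_{2R_0})}$ itself through the behaviour of $u$ outside a large ball, or by a normalization/compactness argument that rules out the polynomial degeneration. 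Pure dilation cannot see this: it rescales the far field, where the inequality already holds multiplicatively, but never removes the fixed-size region on which the residual term lives.
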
 The
above lemma, proved in \cite{CM},
gives an interpolation inequality
of
Gagliardo-Nirenberg's type \rf{GN-I} with $c_0=0$ in exterior domains. The difference
with respect to the usual result is
the fact that the function $u$ does
not belong to a completion space of
$C^\infty_0(\OO)$.
\begin{lemma}\label{AES}{\sl Let $D^2u\in L^q(\OO)$ and, for all bounded $\OO'\subset\OO$ such that $\po \cap\partial(\OO-\OO')=\emptyset$, assume that $u\in W^{1,q}(\OO')$ with zero trace on $\po$. Finally, assume that $\nabla \cdot u=0$ almost everywhere. Then there exists  a pressure field $\pi_u$ and a constant $c$ independent of $u$ such that
\be\label{SAE} \dm D^2u\dm_q+\dm\nabla \pi_u\dm_q+ \dm u\dm_{W^{1,q}(\OO')}\leq c(\dm P_q\Delta u\dm_q+\dm u\dm_{L^q(\OO')}).\ee   If $\OO$ is a bounded domain, then we get
\be\label{SAEI} \dm D^2u\dm_q+\dm\nabla \pi_u\dm_q+ \dm u\dm_{W^{1,q}(\OO)}\leq c\dm P_q\Delta u\dm_q ,\ee with $c$ independent of $u$ and depending on $\OO$.}\end{lemma}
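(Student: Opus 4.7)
My plan is to introduce the pressure via the Helmholtz--Weyl decomposition and then to reduce the assertion to the classical Cattabriga--Solonnikov stationary Stokes $L^q$-estimate, combined with a localisation argument in the exterior case. Since $D^2u\in L^q(\OO)$ gives $\Delta u\in L^q(\OO)$, I set
\[
\nabla\pi_u:=\Delta u-P_q\Delta u,
\]
which defines $\pi_u$ up to an additive constant with $\nabla\pi_u\in L^q(\OO)$. By construction $(u,\pi_u)$ solves the stationary Stokes system $-\Delta u+\nabla\pi_u=-P_q\Delta u$, $\n\cdot u=0$, with zero trace on $\po$. Taking the divergence of the momentum equation and using $\n\cdot P_q\Delta u=0$ shows that $\pi_u$ is harmonic on $\OO$, a fact I will use at the end.

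For the bounded case \rf{SAEI} I would invoke directly the Cattabriga estimate for the stationary Stokes problem on a smooth bounded domain, which yields $\dm D^2u\dm_q+\dm\n\pi_u\dm_q\leq c\dm P_q\Delta u\dm_q$. Since $u$ has zero trace on $\po$, the Poincar\'e inequality together with the interpolation inequality of Lemma \ref{GN} provide $\dm u\dm_{W^{1,q}(\OO)}\leq c\dm D^2u\dm_q$, whence \rf{SAEI} follows.

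For the exterior case \rf{SAE}, I would localise with a smooth cut-off $\zeta$ that vanishes on a neighbourhood of $\po$ contained in $\OO'$ and equals $1$ outside a larger bounded set $\OO''\subset\OO'$. After correcting the divergence $u\cdot\n\zeta$ via the Bogovskii operator supported in the bounded annulus $\{0<\zeta<1\}$, the localised field extends by zero to $\R^n$ and satisfies there a Stokes system with right-hand side $-\zeta P_q\Delta u$ plus commutator terms supported in $\OO'$ and depending on $u,\n u,\pi_u$. The whole-space $L^q$ Stokes estimate then controls $\dm D^2u\dm_{L^q(\OO\setminus\OO'')}+\dm\n\pi_u\dm_{L^q(\OO\setminus\OO'')}$ by $\dm P_q\Delta u\dm_q$ plus local norms on $\OO'$. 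On the complementary bounded subdomain containing $\po$, the Cattabriga estimate applied to the cut-off $(1-\zeta)u$ (again after Bogovskii correction) delivers the analogous bound, which adds up to \rf{SAE} modulo a lower-order pressure term.

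The main technical obstacle is precisely this pressure term $\dm\pi_u\dm_{L^q(\OO')}$ arising from the commutators, which \rf{SAE} does not permit on the right. I remove it by fixing the additive constant so that $\pi_u$ has zero mean on a fixed open subset $\OO_0\subset\OO''$; since $\pi_u$ is harmonic on $\OO$, interior estimates for harmonic functions together with the Poincar\'e inequality absorb $\dm\pi_u\dm_{L^q(\OO')}$ into $\dm\n\pi_u\dm_{L^q(\OO'')}$, which in turn is already controlled by the local bounded-domain Stokes estimate in terms of $\dm P_q\Delta u\dm_q+\dm u\dm_{L^q(\OO')}$. Substituting back closes \rf{SAE}.
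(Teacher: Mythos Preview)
The paper does not actually prove this lemma: its entire proof reads ``For the proof see for example \cite{Gl} or \cite{MSii}.'' Your sketch is precisely the standard argument one finds in those references (Cattabriga--Solonnikov estimate on bounded domains, cut-off plus Bogovski\u{\i} correction to split the exterior problem into a whole-space piece and a bounded-domain piece near $\po$), so in content you are in full agreement with the cited sources.

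One small point worth tightening: your absorption of the lower-order pressure term is the only delicate step, and the appeal to harmonicity of $\pi_u$ plus interior estimates is not quite the right mechanism. The commutator terms involve $\pi_u$ only on the annulus $\{0<\zeta<1\}$, which is contained in a fixed bounded region; normalising $\pi_u$ to have zero mean there and applying Poincar\'e on that annulus yields $\dm\pi_u\dm_{L^q(\text{annulus})}\leq c\dm\n\pi_u\dm_{L^q(\text{annulus})}$, and this is then absorbed into the left-hand side of the combined estimate (in Galdi's treatment this last absorption is carried out via a contradiction/compactness argument rather than by smallness). With that adjustment your outline is complete.
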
\begin{proof} For the proof  see for example \cite{Gl} or \cite{MSii}.
\end{proof}
Let us consider the Stokes homogeneous  problem
\be\label{SSTP}-\Delta V+\n \pi_V=0,\quad \nabla \cdot V=0\mbox{ on }\OO,\mbox{ with }V=0\mbox{ on }\po\,.\ee\begin{tho}\label{SSTPT}{\sl Let $\OO$ be an exterior domain. Let $p\geq n>2$ or $p>2$ if $n=2$. Then problem \rf{SSTP} admits a regular non trivial solution $(V,\pi_V)\in J_0^p(\OO)\times L^p(\OO)$.}\end{tho}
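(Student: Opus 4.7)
The plan is to construct a non-trivial solution directly, using classical exterior Stokes theory. The key observation is that in exterior domains the space $J_0^p(\Omega)$ for $p\geq n$ contains functions that need not decay to zero at infinity, so solutions $V$ vanishing on $\partial\Omega$ but approaching a non-zero constant (or growing logarithmically in 2D) at infinity are admissible candidates. Such solutions are precisely the obstruction that the Stokes problem \rf{SSTP} cannot have uniqueness in the wider class.

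For $n\geq 3$, I would invoke the classical theory of the Stokes flow past a body (Odqvist/Finn). Fix any $a\in\R^n$, $a\neq 0$: there exists a regular solution $(V,\pi_V)$ to \rf{SSTP} with the additional asymptotic condition $V(x)\to a$ as $|x|\to\infty$. The sharp pointwise decay of the derivatives is
$$\nabla V(x)=O(|x|^{-(n-1)})\,,\qquad \pi_V(x)=O(|x|^{-(n-1)})\,,\qquad |x|\to\infty\,,$$
so $\nabla V,\,\pi_V\in L^p(\Omega)$ whenever $p(n-1)>n$; in particular for all $p\geq n$. Non-triviality is immediate from $V\not\to 0$, so $V\not\equiv 0$.

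For $n=2$, the Stokes paradox forbids a bounded-at-infinity flow past the body, but one can still obtain a solution by prescribing the leading-order term to be a multiple of the 2D Stokeslet $E(x)=\frac1{4\pi}\bigl(-\log|x|\,I+\frac{x\otimes x}{|x|^2}\bigr)$ centered at a point of $\Omega^c$. Given a constant vector $b\neq 0$, pick a smooth cut-off $\chi$ equal to $1$ outside a large ball and vanishing on a neighborhood of $\partial\Omega$, define the ansatz $V_0=\chi\,E\,b$ (with a Bogovski\u\i\ correction to restore $\n\cdot V_0=0$), and solve the Stokes system for the remainder $V_1$ with compactly supported data and suitable behavior at infinity using Lemma \ref{AES}. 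The resulting field $V=V_0+V_1$ satisfies $V=0$ on $\po$, $\n\cdot V=0$, $-\Delta V+\n\pi_V=0$ in $\Omega$, with $V(x)\sim E(x)b$ at infinity. Then $|\n V(x)|=O(|x|^{-1})$ and $|\n V|^p$ is integrable at infinity precisely when $p>2$.

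Finally, I would verify $V\in J_0^p(\Omega)$ by exhibiting an approximating sequence. Let $\chi_R$ be smooth cut-offs with $\chi_R=1$ on $B_R$ and supported in $B_{2R}$, and define $\widetilde V_R=\chi_RV-B_R[\n\chi_R\cdot V]$, where $B_R$ is the Bogovski\u\i\ operator on the annulus $B_{2R}\setminus B_R$ restoring the solenoidal character. Each $\widetilde V_R\in\mathscr C_0(\Omega)$. Because the integrability tails of $|\nabla V|^p$ are finite in the stated range of $p$, the term $\chi_R\n V$ converges to $\n V$ in $L^p$ by dominated convergence, and the $L^p$-continuity of Bogovski\u\i\ together with the decay of $V$ on the annulus handles the commutator term. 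This shows $\n\widetilde V_R\to\n V$ in $L^p(\Omega)$ and hence $V\in J_0^p(\Omega)$. The main obstacle I expect is precisely this approximation step: controlling the Bogovski\u\i\ correction in the $n=2$ case, where $V$ itself grows like $\log|x|$, requires checking that the $L^p$-norm of $V$ on the annulus $B_{2R}\setminus B_R$, weighted against $|\n\chi_R|\sim R^{-1}$, still tends to zero as $R\to\infty$ when $p>2$.
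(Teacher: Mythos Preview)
The paper does not prove this statement at all; it simply cites Lemma 5.1 of Galdi's monograph \cite{Gl}. Your proposal is therefore a genuine attempt at what the reference does, and the overall strategy --- take the classical Stokes flow past an obstacle (or a Stokeslet-based field in 2D) and show it lies in $J_0^p(\OO)$ --- is the correct one.

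There is, however, a real gap in your approximation argument, and it is not where you think. For $n\geq3$ and $p=n$ (the borderline case included in the hypotheses), the naive cut-off fails: since $V\to a\neq0$ at infinity, on the annulus $A_R=B_{2R}\setminus B_R$ one has
\[
\dm V\,\nabla\chi_R\dm_{L^n(A_R)}\;\gtrsim\;|a|\,R^{-1}\,|A_R|^{1/n}\;\sim\;|a|\,,
\]
which stays bounded away from zero, and the Bogovski\u\i\ term is of the same order. The cure is a logarithmic cut-off $\psi_R(x)=\eta\bigl(\log|x|/\log R\bigr)$, for which $|\nabla\psi_R|\lesssim (|x|\log R)^{-1}$ and the analogous integral is $O\bigl((\log R)^{1-n}\bigr)\to0$; one then has to be careful that the Bogovski\u\i\ constant on the stretched annulus $B_{R^2}\setminus B_R$ is controlled, which requires an additional decomposition into dyadic shells. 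This is exactly the technical content behind Galdi's characterisation of $\widehat D_0^{1,n}(\OO)$ that your argument needs.

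Ironically, the $n=2$, $p>2$ case you flag as the obstacle is actually fine with your naive cut-off: $V=O(\log|x|)$ gives $\dm V\nabla\chi_R\dm_p\lesssim R^{2/p-1}\log R\to0$ since $2/p-1<0$. A minor further point: your $\widetilde V_R$ equals $V$ near $\po$ and so vanishes only on $\po$, not in a neighbourhood of it; an extra (standard) mollification near the boundary is needed before you land in $\mathscr C_0(\OO)$.
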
\begin{proof} See Lemma\,5.1 in \cite{Gl}\,.\end{proof}
\begin{lemma}\label{CLM}{\sl Let $\Phi\in C(\ov \OO)\cap L^q(\OO)$ with $\intl{\po}\Phi\cdot\nu d\sigma=0$. Assume that $\n\cdot\Phi=0$ in weak sense. If the following holds  $$|(\Phi,v_0)|\leq M\dm v_0\dm_{q'}\,,\mbox{ for all }v_0\in\mathscr C_0(\OO)\,,$$ then there exists a constant $c$ independent of $\Phi$ such that \be\label{CLM-I}\dm \Phi\dm_q\leq c(M+\dm \gamma_{tr}(\Phi\cdot \nu)\dm_{-\frac1q,q})\,.\ee  }\end{lemma}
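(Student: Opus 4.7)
The plan is to argue by duality: $\dm\Phi\dm_q = \sup\{|(\Phi,\psi)|:\psi\in\mathscr C^\infty_0(\OO),\ \dm\psi\dm_{q'}\leq 1\}$. Given such a $\psi$, I would apply the $L^{q'}$ Helmholtz--Weyl decomposition (available for $1<q'<\infty$ in any smooth bounded or exterior domain by results of Miyakawa and Simader) to split $\psi=\psi_\sigma+\n p$ with $\psi_\sigma\in J^{q'}(\OO)$ solenoidal, $\n p\in L^{q'}(\OO)$, and
$\dm\psi_\sigma\dm_{q'}+\dm\n p\dm_{q'}\leq c\dm\psi\dm_{q'}$.

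The solenoidal part is controlled directly by the hypothesis: since $\mathscr C_0(\OO)$ is dense in $J^{q'}(\OO)$, the assumed inequality $|(\Phi,v_0)|\leq M\dm v_0\dm_{q'}$ extends from $v_0\in\mathscr C_0(\OO)$ to $\psi_\sigma$, yielding $|(\Phi,\psi_\sigma)|\leq M\dm\psi_\sigma\dm_{q'}\leq cM\dm\psi\dm_{q'}$.

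For the gradient part, I use the weak solenoidality of $\Phi$: since $\n\cdot\Phi=0$ in the weak sense and $\Phi\in L^q(\OO)\cap C(\ov\OO)$, the normal trace $\gamma_{tr}(\Phi\cdot\nu)$ is well-defined (classically on $\po$, and as an element of $W^{-1/q,q}(\po)$), and integration by parts gives $(\Phi,\n p)=(\gamma_{tr}(\Phi\cdot\nu),p)_\po$. Because of the mean-zero hypothesis $\intl\po\Phi\cdot\nu\,d\sigma=0$, this pairing is invariant under $p\mapsto p-k_0$ for any constant $k_0$. I take $k_0$ to be the mean of $p$ over a bounded Lipschitz collar $D$ of $\po$ (in the bounded case $D=\OO$; in the exterior case $D=\OO\cap B_R$ with $R$ large enough that $\po\subset B_R$). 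By Poincar\'e's inequality on $D$ together with the trace estimate \rf{tr}, $\dm p-k_0\dm_{1-1/q',q'}\leq c(\dm p-k_0\dm_{L^{q'}(D)}+\dm\n p\dm_{L^{q'}(D)})\leq c\dm\n p\dm_{q'}\leq c\dm\psi\dm_{q'}$, and duality then gives $|(\Phi,\n p)|\leq c\dm\gamma_{tr}(\Phi\cdot\nu)\dm_{-1/q,q}\dm\psi\dm_{q'}$.

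Combining the two estimates and taking the supremum over $\psi$ yields \rf{CLM-I}. The main technical subtlety I expect is the precise interpretation of the normal trace $\gamma_{tr}(\Phi\cdot\nu)$ for a merely $L^q$-integrable, weakly solenoidal vector field and the justification of the integration-by-parts identity used above; once this standard trace theory and the $L^{q'}$ Helmholtz decomposition (especially in the exterior setting) are in hand, the proof reduces to a clean duality argument with a localized Poincar\'e estimate near $\po$.
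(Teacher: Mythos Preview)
Your proposal is correct and follows essentially the same approach as the paper: duality against arbitrary test functions $\psi$, Helmholtz decomposition $\psi=P\psi+\nabla\Pi_\psi$ with $L^{q'}$ bounds, the hypothesis (extended by density to $J^{q'}(\OO)$) to control the solenoidal part, and for the gradient part the zero-flux condition to subtract the mean of $\Pi_\psi$ over a bounded collar $\OO'$ of $\po$, followed by the trace inequality and Poincar\'e. The paper's proof is exactly this argument, with the same localized Poincar\'e step near the boundary.
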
\begin{proof} By virtue of the Helmholtz decomposition, for all $\psi\in  C_0(\OO)$  we get $\psi=P\psi+\n\Pi_\psi\,$ with $\dm P\psi\dm_{q'}+\dm \n\Pi_\psi\dm_{q'}\leq c\dm \psi\dm_{q'}$.  Hence we get \be\label{CLM-II}|(\Phi,\psi)|\leq |(\Phi,P\psi)|+|(\Phi,\n\Pi_\psi)|=:I_1+I_2\,.\ee
By the assumption we   deduce $I_1\leq M\dm\psi\dm_{q'}$. Instead for $I_2$, via the assumption of zero flux for $\Phi$, applying the trace theorem, we get
$$\ba{ll}I_2\hskip-0.2cm&=|(\Phi\cdot \nu,\Pi_\psi\!-\ov \Pi_\psi)_{\partial \Omega}|\leq \dm \gamma_{tr}(\Phi\cdot \nu)\dm_{-\frac1q,q}\dm \Pi_\psi\!-\ov \Pi_\psi\dm_{1-\frac{1\hskip-0.1cm\null}{q'},q'}\VSE \leq\! c\dm \gamma_{tr}(\Phi\!\cdot \!\nu)\dm_{-\frac1q,q}\dm \Pi_\psi\!-\ov \Pi_\psi\dm_{W^{1,q'}(\OO')}\!\leq\! c\dm \gamma_{tr}(\Phi\!\cdot\! \nu)\dm_{-\frac{1}{q},q}\dm \n\Pi_\psi \dm_{q'}\VSE               \leq c\dm \gamma_{tr}(\Phi\!\cdot\! \nu)\dm_{-\frac{1}{q},q}\dm \psi\dm_{q'}\,,\,\ea$$ where we applied the Poincar\é inequality after setting $\ov\Pi_\psi:=\frac 1{|\OO'|}\intl{\OO'}\pi_\psi dx$ with $\OO'\subset\OO$ and $\partial (\OO-\OO')\cap\partial \OO=\emptyset$. Estimating the right hand side of \rf{CLM-II} by means of the estimates  deduced for $I_1$ and $I_2$, since $\psi$ is arbitrary we easily arrive at \rf{CLM-I}. \end{proof}
\begin{lemma}\label{PM}{\sl Let $n\geq 3$ and $\Phi\in L^p(\OO)$, $p>\frac n{n-1}$\,. Assume that $\n\cdot\Phi=0$ in weak sense and  $$|(\Phi,v_0)|\leq M\dm v_0\dm_{q'}\,,\mbox{ for all }v_0\in\mathscr C_0(\OO)\,,$$ for some $q'>p'$, then $\Phi\in L^q(\OO)$.}\end{lemma}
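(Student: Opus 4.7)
The plan is to obtain $\Phi\in L^q(\OO)$ by duality: I aim to establish $|(\Phi,\psi)|\leq c\dm\psi\dm_{q'}$ for every $\psi\in C_0^\infty(\OO)$, after which density of $C_0^\infty(\OO)$ in $L^{q'}(\OO)$ and the Riesz representation theorem identify $\Phi$ with an element of $L^q(\OO)$. The central device is the $L^{q'}$-Helmholtz decomposition $\psi=P\psi+\n\Pi_\psi$, with $\dm P\psi\dm_{q'}+\dm\n\Pi_\psi\dm_{q'}\leq c\dm\psi\dm_{q'}$, reducing the task to estimating the solenoidal and gradient contributions of $(\Phi,\psi)$ separately.

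For the solenoidal part, since $P\psi\in J^{q'}(\OO)$ is the $L^{q'}$-limit of a sequence $v_0^{(k)}\in\mathscr C_0(\OO)$, I would arrange the approximants to converge also in $L^{p'}$ (feasible because $\psi\in C_0^\infty\subset L^{p'}\cap L^{q'}$ and $P$ is bounded on both spaces, so a standard cut-off and mollification on $P\psi$ produces $v_0^{(k)}\to P\psi$ in $L^{p'}\cap L^{q'}$), so that the integral pairing with $\Phi\in L^p$ is well defined by H\"older and agrees with $\lim_k(\Phi,v_0^{(k)})$. The hypothesis then supplies the Cauchy property and delivers $|(\Phi,P\psi)|\leq M\dm P\psi\dm_{q'}\leq cM\dm\psi\dm_{q'}$. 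For the gradient part I would exploit $\n\cdot\Phi=0$ and integrate by parts. Fixing the gauge of $\Pi_\psi$ by $\Pi_\psi(x)\to0$ at infinity, the compact support of $\psi$ forces $\Pi_\psi$ to solve a Neumann--Laplace problem in $\OO$ with compactly supported source (of zero mean, being a divergence) and vanishing Neumann datum on $\po$, whence $\Pi_\psi(x)=O(|x|^{-(n-1)})$ at infinity. Integration by parts gives
$$
(\Phi,\n\Pi_\psi)=\big(\gamma_{tr}(\Phi\cdot\nu),\Pi_\psi\big)_{\po}-\lim_{R\to\infty}\int_{|x|=R}(\Phi\cdot\hat x)\,\Pi_\psi\,d\sigma,
$$
with the limit at infinity vanishing along a suitable subsequence $R_k\to\infty$, using $\dm\Phi\dm_{L^p(S_R)}=o(R^{-1/p})$ (subsequentially, via $\dm\Phi\dm_p<\infty$) together with the decay of $\Pi_\psi$; the assumption $p>\tfrac n{n-1}$ enters here to make the resulting power of $R$ strictly negative. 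The normal trace satisfies $\dm\gamma_{tr}(\Phi\cdot\nu)\dm_{-1/p,p}\leq c\dm\Phi\dm_p$ for any weakly divergence-free $L^p$ field, while the trace theorem combined with a Poincar\'e inequality in a bounded neighborhood of $\po$ yields $\dm\Pi_\psi\dm_{1-1/q',q'}\leq c\dm\n\Pi_\psi\dm_{q'}\leq c\dm\psi\dm_{q'}$. Since $q<p$ (i.e.\ $q'>p'$), the Sobolev embedding on the compact $(n-1)$-dimensional manifold $\po$ gives $W^{1-1/q',q'}(\po)\hookrightarrow W^{1-1/p',p'}(\po)$, and the $W^{-1/p,p}$-$W^{1-1/p',p'}$ duality produces $|(\Phi,\n\Pi_\psi)|\leq c\dm\Phi\dm_p\dm\psi\dm_{q'}$.

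Summing the two contributions yields $|(\Phi,\psi)|\leq c\dm\psi\dm_{q'}$ for every $\psi\in C_0^\infty(\OO)$, which by density closes the argument. I expect the main obstacle to be the rigorous justification of the integration by parts in the gradient part: fixing the correct gauge for $\Pi_\psi$, producing the refined decay $O(|x|^{-(n-1)})$ (which uses that the source $\n\cdot\psi$ has vanishing mean), and then showing that the surface term at infinity dies along a subsequence, where the lower bound $p>\tfrac n{n-1}$ plays an essential role. The verification of the Sobolev embedding between the trace spaces on $\po$ is a minor companion technicality.
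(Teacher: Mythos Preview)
The paper itself gives no proof here; it simply cites Lemma~2.6 of \cite{MS-II}. Your duality-plus-Helmholtz approach is exactly the scheme the paper uses for the closely related Lemma~\ref{CLM}, so in spirit you are following the intended route.

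Two points in your sketch need repair. First, your Poincar\'e step ``$\dm\Pi_\psi\dm_{1-1/q',q'}\leq c\dm\n\Pi_\psi\dm_{q'}$'' is not what Poincar\'e gives: it only controls $\Pi_\psi-\ov\Pi_\psi$, and here, unlike in Lemma~\ref{CLM}, you have \emph{not} assumed the zero-flux condition $\intl{\po}\Phi\cdot\nu\,d\sigma=0$ that would let you absorb the constant. Indeed the radial field $\Phi=c|x|^{-(n-1)}\hat x=\n\big(-\tfrac{c}{n-2}|x|^{-(n-2)}\big)$ is divergence-free in $\OO$, lies in $L^p(\OO)$ for every $p>\tfrac{n}{n-1}$, satisfies $(\Phi,v_0)=0$ for all $v_0\in\mathscr C_0(\OO)$, yet $\Phi\notin L^q(\OO)$ once $q\le\tfrac{n}{n-1}$; so the statement really needs $q>\tfrac{n}{n-1}$ (equivalently $q'<n$), and the application in Lemma~\ref{AX} has $q=\tfrac{n}{n-2}>\tfrac{n}{n-1}$, consistent with this. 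With $q'<n$ you fix the gap not by Poincar\'e but by the Sobolev inequality for the gauge $\Pi_\psi\to0$: $\dm\Pi_\psi\dm_{(q')^*}\le c\dm\n\Pi_\psi\dm_{q'}$, whence on the bounded collar $\OO'$ one gets $\dm\Pi_\psi\dm_{W^{1,q'}(\OO')}\le c\dm\psi\dm_{q'}$ and then the trace bound you want.

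Second, your attribution of the hypothesis $p>\tfrac{n}{n-1}$ to the surface term at $|x|=R$ is off: with the refined decay $\Pi_\psi=O(|x|^{-(n-1)})$ that you correctly derive, the cut-off error is $O(R^{-n/p})$ and vanishes for every finite $p$. The constraint $p>\tfrac{n}{n-1}$ (i.e.\ $p'<n$) is rather the compatibility condition forced by $q<p$ together with the restriction $q>\tfrac{n}{n-1}$ just discussed, and it is the Sobolev step above---not the far-field term---that makes the range of exponents enter.
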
\begin{proof} See Lemma\,2.6 p.406 of \cite{MS-II}.\end{proof}
 Concerning the Stokes problem \rf{STI} we recall the following
\begin{lemma}\label{LI}{\sl Let $s\in(1,\infty)$. For all $\varphi_0\in  J^s(\OO)$ there exist a unique solution to problem \rf{STI} such that   
\be\label{JPAA}\ba{l}\eta>0,\; \varphi\in
C([0,T)J^s(\OO))\cap
L^\infty(\eta,T;J^{1,s}(\OO)\cap
W^{2,s}(\OO)),\\
\nabla\pi_\varphi,\,\varphi_t\in
L^\infty(\eta,T;L^s(\OO)).\ea \ee
Moreover, for $q\in[s,\infty]$ and $t>\sigma\geq0$, set $\mu:=\frac n2\left(\frac1s-\frac1q\right)$, we get
\be\label{JPA}\ba{ll}\dm \varphi(t )\dm
_q\leq c\dm \varphi(\sigma)\dm _s(t\!-\!\sigma)^{-\mu},\hskip-0.17cm&
 \mbox{for all }t-\sigma>0;\vspace{4pt}\\\dm
\nabla\! \varphi(t )\dm _q\!\leq \!c\dm \varphi(\sigma)\dm
_s(t\!-\!\sigma)^{-\mu_1},\hskip-0.17cm
&\hskip-0.1cm
\mu_1\hskip-0.1cm:=\!\left\{\hskip-0.2cm\ba{ll}\frac12\!+\!\mu\hskip-0.55cm
&\mbox{if }t\!-\!\sigma\in(0,1],\\\frac12\!+\!\mu
&\mbox{if }t\!-\!\sigma\!>\!1,q\leq\! n,
\\\frac n{2s}&\mbox{if }t\!-\!\sigma\!>\!1\,,q\!\geq n;\ea\right.
\vspace{4pt}\\\dm \varphi_t(t )\dm _q\leq
c\dm \varphi(\sigma)\dm _s(t\!-\!\sigma)^{-\mu_2},\hskip-0.17cm
& \hskip-0.1cm\mu_2\hskip-0.1cm:=1+\mu,\mbox{ for all }t-\sigma>0;\vspace{4pt}\\\dm D^{ 2}\!\varphi(t )\!\dm_q\leq\! c \dm \varphi(\sigma)\dm_s(t\!-\!\sigma)^{-\mu_3}, \hskip-0,17cm&\hskip-0.1cm\mu_3\hskip-0.1cm:=\!\left\{\hskip-0.2cm\ba{ll}1\!+\!\mu\hskip-0.25cm&\mbox{if }t\!-\!\sigma\in(0,1],\\1\!+\!\mu& \mbox{if }t\!-\!\sigma\!>\!1,q<\!\frac n2,
\\\frac{n}{2s}&\mbox{if }t\!-\!\sigma>\!1, q\!\geq\!\frac n2;\ea\right.
\ea\ee
where the constant $c$ is
independent of $\varphi_0$ and the
exponent $\mu_1$ is sharp for $s\geq\frac n2\,,\,n\geq3$ in the sense that there is no function $\xi(t)$ such that 
\be\label{OPT-I}t^{-1}\xi(t)\in L^1(t_0,\infty)\mbox{ and }\dm \n \varphi(t)\dm_{L^q(\OO\cap S_R)}\leq \xi(t) t^{-\mu} \dm \varphi_0\dm_s \,,\ee where $R>diam(\OO^c)$ and $\xi$ are independent $\varphi$. Finally, for all $s\in(1,\infty)$ and $\varphi_0\in J^s(\OO)$ the following limit property holds:
\be\label{FPL}\lim_{t\to\infty}\dm \varphi(t)\dm_s=0\,.\ee   }\end{lemma}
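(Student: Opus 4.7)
The plan is to assemble Lemma \ref{LI} from a combination of classical Stokes semigroup results and two specific arguments (for the sharpness and for the limit (\ref{FPL})). Existence, uniqueness and the interior-in-time regularity (\ref{JPAA}) are the standard consequence of analyticity of the Stokes semigroup on $J^s(\OO)$: this is classical in both the bounded case (Solonnikov, Giga) and the exterior case (Giga-Sohr, Iwashita), all of which appear in the reference list. The $L^s$-$L^q$ decay for $\|\varphi(t)\|_q$ is the Giga-Sohr estimate; the bounds for $\varphi_t$ and $D^2\varphi$ follow from this together with $\varphi_t=P_q\Delta\varphi$, analyticity ($\|A_s\varphi(t)\|_s\leq c(t-\sigma)^{-1}\|\varphi(\sigma)\|_s$), and the Stokes a priori inequality of Lemma \ref{AES}.

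The only point where care is needed in (\ref{JPA}) is the gradient estimate $\|\nabla\varphi(t)\|_q$. For $t-\sigma\in(0,1]$ the exponent $\frac{1}{2}+\mu$ is the parabolic scaling rate, already embedded in the analyticity on $J^s$. For $t-\sigma>1$ one splits the cases $q\leq n$ and $q\geq n$: the first uses the same semigroup identity $\nabla e^{-tA_s}=\nabla e^{-A_s}\cdot e^{-(t-1)A_s}$; the second is the sharp long-time exterior-domain statement established in \cite{MS-II} (and foreshadowed by Iwashita), which produces the saturation at rate $n/(2s)$ instead of $\frac12+\mu$. I would simply cite these references for the bulk of (\ref{JPA}).

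For the sharpness of $\mu_1$ when $s\geq n/2$, $n\geq 3$, the argument uses Theorem \ref{SSTPT}, which provides a non-trivial $V\in J_0^p(\OO)\cap L^p(\OO)$ solving the steady Stokes system. Assuming for contradiction that $\|\nabla\varphi(t)\|_{L^q(\OO\cap S_R)}\leq\xi(t)t^{-\mu}\|\varphi_0\|_s$ with $\xi(t)/t\in L^1(t_0,\infty)$, one applies the Green identity (\ref{GIF}) with $(V,\pi_V)$ (or rather, with the adjoint Stokes flow generated by a test datum $\varphi_0$ paired against $V$) over $(t_0,t)\times\OO$, integrates the boundary terms using the trace inequalities (\ref{tr}) and Lemma \ref{CLM} and sends $t\to\infty$; the $L^1$-summability of $t^{-1}\xi(t)$ forces the boundary integral to converge, producing a relation that contradicts the non-triviality of $V$. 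This is the main obstacle in the proof and is the delicate part I would need to work out carefully; I expect the details to mirror the argument sketched for Proposition \ref{CCT} later in the paper.

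Finally, for the limit property (\ref{FPL}), I would argue by density. Pick $\varphi_0^{(k)}\in\mathscr{C}_0(\OO)$ approximating $\varphi_0$ in the $L^s$-norm. Each $\varphi_0^{(k)}$ lies also in $J^r(\OO)$ for some $r<s$ (in the exterior case by compact support; in the bounded case it lies in every $L^r$), so applying (\ref{JPA})$_1$ with that $r$ gives $\|\varphi^{(k)}(t)\|_s\leq ct^{-\mu(r,s)}\|\varphi_0^{(k)}\|_r\to 0$ as $t\to\infty$. The uniform contraction $\|\varphi(t)-\varphi^{(k)}(t)\|_s\leq c\|\varphi_0-\varphi_0^{(k)}\|_s$ from analyticity then yields $\limsup_{t\to\infty}\|\varphi(t)\|_s\leq c\|\varphi_0-\varphi_0^{(k)}\|_s$, and letting $k\to\infty$ concludes (\ref{FPL}).
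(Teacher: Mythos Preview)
Your handling of existence, regularity, and the estimates \rf{JPA}$_{1,2,3}$ by citation is exactly what the paper does: it refers to \cite{MS-II} (and \cite{DS-I,DS-II} for the $n=2$, $q=\infty$ endpoint). Your derivation of \rf{JPA}$_4$ via $P\Delta\varphi=\varphi_t$ and the a~priori estimate \rf{SAE} also matches the paper. Your density argument for \rf{FPL} is standard and acceptable; the paper again simply cites \cite{MS-II}.

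The genuine gap is in your sharpness argument. You propose to use the nontrivial homogeneous steady solution $V$ of Theorem~\ref{SSTPT}. But $V=0$ on $\partial\Omega$, so in the Green identity \rf{GIF} between $V$ and the evolutionary solution $\varphi$ all boundary terms vanish; one obtains only that $(\varphi(t),V)$ is constant in $t$, which carries no information about the decay of $\|\nabla\varphi(t)\|_{L^q(\Omega\cap S_R)}$. The paper's actual proof (deferred to Lemma~\ref{AX}) is different in a crucial way: it introduces a steady Stokes solution $\Phi$ with \emph{non}-homogeneous boundary datum $\Phi=a$ on $\partial\Omega$, so that $\Phi=O(|x|^{2-n})$ and $\Phi\in L^q(\Omega)$ only for $q>\frac{n}{n-2}$. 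Pairing $\Phi$ with the evolutionary solution $\varphi$ (initial datum $\varphi_0\in\mathscr C_0(\Omega)$) and integrating in time, the boundary integrals now survive and involve $\|\nabla\varphi\|_{L^q(\partial\Omega)}$ and $\|\pi_\varphi\|_{L^q(\partial\Omega)}$; these in turn are controlled by $\|\nabla\varphi\|_{L^q(\Omega\cap S_R)}+\|\varphi_t\|_q$ via \rf{tr}, \rf{SAE} and \rf{traa}--\rf{trbb}. Under the hypothetical improved decay \rf{OPT-I}, the time integral converges, and letting $t\to\infty$ (using \rf{FPL}) yields $|(\Phi,\varphi_0)|\leq c\|\varphi_0\|_{n/2}$ for all $\varphi_0\in\mathscr C_0(\Omega)$. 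Lemma~\ref{PM} then forces $\Phi\in L^{n/(n-2)}(\Omega)$, contradicting the asymptotics of $\Phi$ for generic $a$. Note also that Theorem~\ref{SSTPT} is used in the paper only for item~ii.\ of Proposition~\ref{CCT}, not for the sharpness in Lemma~\ref{LI}; your expectation that the details ``mirror'' Proposition~\ref{CCT} is therefore misplaced, since that proposition in fact \emph{reduces} to the sharpness proved in Lemma~\ref{AX}.
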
\begin{proof} With exception of \rf{JPA}$_1$ in the case of  $n=2$ and $q=\infty$, for which we refer to \cite{DS-I,DS-II}, the   claims of the lemma are essentially the ones proved in 
  \cite{MS-II}. Estimate \rf{JPA}$_4$ is contained in \cite{MS-II} but it is not stated in no theorem. However, after remarking that $P\Delta \varphi=\varphi_t$, for the task it is enough to apply  estimate \rf{SAE} and suitably estimates \rf{JPA}$_{1,2,3}$\,. As well the optimality expressed by \rf{OPT-I} is an improvement of the ones given in \cite{MS-II} (see also \cite{DS-I,DS-II,GT}). We furnish the proof of the optimality stated by means of \rf{OPT-I} in Lemma\,\ref{AX} below.  \end{proof}\begin{coro}\label{BDC}{\sl In the same hypotheses of Lemma\,\ref{LI} and furthermore assuming $\OO$ bounded domain, then for $t>\sigma\geq0$ the following holds:
\be\label{BDC-I}(t-\sigma)\dm \varphi_t(t)\dm_q+(t-\sigma)^\frac12\dm \n \varphi(t)\dm_q+\dm \varphi(t)\dm_q\leq c\dm \varphi(\sigma)\dm_se^{-c_1(t-\sigma)}(t-\sigma)^{-\mu}, \ee where $c_1$ is a constant depending on the size of $\OO$ and constants $c,c_1$ are independent of $\varphi_0$ and of $t,\sigma$. }\end{coro}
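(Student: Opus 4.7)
The plan splits into two regimes according to $t-\sigma$. For $t-\sigma\in(0,1]$ the factor $e^{-c_1(t-\sigma)}$ lies in $[e^{-c_1},1]$, so it can be absorbed into the constant and \rf{BDC-I} in this range is already the content of \rf{JPA}$_{1,2,3}$: \rf{JPA}$_1$ handles $\dm\varphi(t)\dm_q$ directly, and multiplying \rf{JPA}$_2$ by $(t-\sigma)^{\frac12}$ and \rf{JPA}$_3$ by $(t-\sigma)$ reproduces the required $(t-\sigma)^{-\mu}$ weight.

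For $t-\sigma>1$ the essential new ingredient is an $L^s$-exponential decay of the Stokes semigroup on the bounded domain: there exists $\lambda>0$ depending only on $\OO$ such that
\[
\dm\varphi(\tau)\dm_s\leq c\,e^{-\lambda(\tau-\sigma)}\dm\varphi(\sigma)\dm_s\qquad\text{for every }\tau\geq\sigma.
\]
I would first prove this for $s=2$ via the energy identity $\tfrac{d}{d\tau}\dm\varphi\dm_2^2=-2\dm\nabla\varphi\dm_2^2$ together with Poincar\'e's inequality (valid for divergence-free zero-trace fields on the bounded $\OO$) and Gr\"onwall. To pass from $s=2$ to general $s\in(1,\infty)$, I sandwich the $L^2$-decay between two unit-time smoothing steps: \rf{JPA}$_1$ (or the embedding $L^s(\OO)\subset L^2(\OO)$ when $s\geq 2$) gives $\dm\varphi(\sigma+1)\dm_2\leq c\dm\varphi(\sigma)\dm_s$; the $L^2$-decay governs the middle interval $[\sigma+1,\tau-1]$; and a second application of \rf{JPA}$_1$ (or $L^2\subset L^s$ when $s\leq 2$) on $[\tau-1,\tau]$ returns to the $L^s$-norm.

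With this decay in hand, applying \rf{JPA}$_{1,2,3}$ on the unit interval $[t-1,t]$ with datum $\varphi(t-1)\in J^s(\OO)$ yields
\[
\dm\varphi(t)\dm_q+\dm\nabla\varphi(t)\dm_q+\dm\varphi_t(t)\dm_q\leq c\dm\varphi(t-1)\dm_s\leq c\,e^{-\lambda(t-\sigma)}\dm\varphi(\sigma)\dm_s.
\]
For any $c_1\in(0,\lambda)$ the function $x^{1+\mu}e^{-(\lambda-c_1)x}$ is bounded on $(1,\infty)$, so $(t-\sigma)^\alpha e^{-\lambda(t-\sigma)}\leq c\,e^{-c_1(t-\sigma)}(t-\sigma)^{-\mu}$ for $\alpha\in\{0,\tfrac12,1\}$ and $t-\sigma>1$; multiplying the gradient and time-derivative terms by $(t-\sigma)^{\frac12}$ and $(t-\sigma)$ respectively then produces \rf{BDC-I}. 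The main obstacle is the $L^s$-exponential decay for $s\neq 2$; once that is secured, everything else reduces to \rf{JPA} plus an elementary polynomial-versus-exponential comparison.
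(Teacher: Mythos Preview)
Your argument is correct and is precisely the standard derivation the paper has in mind: it states the corollary with no proof, treating it as an immediate consequence of Lemma\,\ref{LI} together with the well-known exponential decay of the Stokes semigroup on a bounded domain. Your split into $t-\sigma\le1$ (pure \rf{JPA}) and $t-\sigma>1$ (exponential $L^s$-decay obtained by the $L^2$ energy/Poincar\'e argument sandwiched between two unit smoothing steps, followed by \rf{JPA} on $[t-1,t]$ and a polynomial--exponential comparison) is exactly the intended route.
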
\begin{coro}\label{CSP}{\sl Let $\varphi_0\in \mathscr C_0(\OO)$. Then, for all $\eta>0$, the solution of Lemma\,\ref{LI} is such that
  \be\label{INT}\ba{l}\varphi\in\mbox{${\underset{s>1}\cap}$}\Big[C([0,T);J^s(\OO))\cap L^\infty(\eta,T;J^{1,s}(\OO)\cap W^{2,s}(\OO))\Big]\,,\VS
\n\pi_\varphi,\,\varphi_t\in \mbox{${\underset{s>1}\cap}$}L^\infty(\eta,T;L^s(\OO))\,.\ea\ee}\end{coro}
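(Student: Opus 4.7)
The key observation is that any $\varphi_0\in\mathscr C_0(\OO)$ is smooth, solenoidal, and compactly supported, and hence belongs to $J^s(\OO)$ for every $s\in(1,\infty)$. Therefore Lemma\,\ref{LI} may be invoked once for each such $s$, producing a solution $\varphi^{(s)}$ to \rf{STI} enjoying the regularity \rf{JPAA} with that particular exponent. The corollary is essentially a consistency statement: one must verify that the $\varphi^{(s)}$ all coincide, so that their common value $\varphi$ inherits \rf{JPAA} simultaneously for every $s>1$, and \rf{INT} then follows immediately by intersecting the regularity classes over $s$.

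\textbf{Identification of the $\varphi^{(s)}$.} Given $1<s_1\leq s_2<\infty$, the difference $w:=\varphi^{(s_1)}-\varphi^{(s_2)}$ solves a homogeneous Stokes IBVP with zero initial datum. If $\OO$ is bounded, the embedding $J^{s_2}(\OO)\hookrightarrow J^{s_1}(\OO)$ gives $w\in C([0,T);J^{s_1}(\OO))$ with $w(0)=0$, so the uniqueness clause of Lemma\,\ref{LI} in $J^{s_1}$ forces $w\equiv 0$. For an exterior domain, where inclusions between $L^s$-spaces fail, one instead picks $r\geq\max\{s_1,s_2\}$: since $\varphi_0$ has compact support, the parabolic smoothing \rf{JPA}$_1$ applied with $\sigma=0$ and $q=r$ yields $\varphi^{(s_j)}\in C((0,T);J^r(\OO))$ for $j=1,2$, and uniqueness of the Stokes IBVP in $J^r$ gives $w\equiv 0$ on $(0,T)$; continuity of each $\varphi^{(s_j)}$ at $t=0$ in its own $J^{s_j}$-norm then extends the identification down to $t=0$.

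\textbf{Conclusion and main obstacle.} With the $\varphi^{(s)}$ all agreeing, the common field $\varphi$ satisfies \rf{JPAA} for every $s\in(1,\infty)$, and the intersection of those regularities is precisely \rf{INT}. The only non-routine point is the matching of the $\varphi^{(s)}$ in the exterior case, which is handled by using the compact support of $\varphi_0$ together with the $L^{s}$-$L^{r}$ smoothing of Lemma\,\ref{LI} to place all the $\varphi^{(s)}$ into a single $J^r$-class on which the uniqueness statement applies.
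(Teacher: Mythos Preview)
The paper gives no argument of its own here; the proof is deferred entirely to \cite{MS-II}. Your outline is the standard one and is essentially correct: since $\varphi_0\in\mathscr C_0(\OO)$ lies in every $J^s(\OO)$, Lemma~\ref{LI} furnishes a solution $\varphi^{(s)}$ for each $s>1$, and the corollary amounts to the consistency statement $\varphi^{(s_1)}=\varphi^{(s_2)}$.

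One step in the exterior case is not fully justified. You invoke ``uniqueness of the Stokes IBVP in $J^r$'' to conclude $w\equiv0$, but the uniqueness clause of Lemma~\ref{LI} is formulated in the class $C([0,T);J^r(\OO))$, whereas the smoothing \rf{JPA}$_1$ only yields $\varphi^{(s_j)}\in C((0,T);J^r(\OO))$; nothing in your argument shows $\varphi^{(s_j)}(t)\to\varphi_0$ in $J^r$ as $t\to0^+$, so the two solutions are not placed in a common uniqueness class up to $t=0$. Applying uniqueness on $[\eta,T)$ does not help either, since the ``initial values'' $\varphi^{(s_1)}(\eta)$ and $\varphi^{(s_2)}(\eta)$ are precisely what you are trying to identify. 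The usual repair (and presumably the content of the reference) is either to argue consistency at the level of the resolvent $(\lambda+A_s)^{-1}$, where the stationary Stokes problem enjoys a uniqueness statement not tied to initial-time continuity, or to use the Green identity \rf{GIF}: for $\psi_0\in\mathscr C_0(\OO)$ both boundary integrals vanish and one obtains $(\varphi^{(s_j)}(t),\psi_0)=(\varphi_0,\psi(t))$ independently of $j$, whence $w(t)\in J^r(\OO)$ annihilates $\mathscr C_0(\OO)$ and is therefore zero by the Helmholtz decomposition. With this adjustment your proof is complete.
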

\begin{proof} For the proof of the Corollary see e.g. \cite{MS-II}\,.\end{proof}
\begin{lemma}\label{TSB}{\sl Let $(\varphi,\pi_\varphi)$ be the solution of Lemma\,\ref{LI}. For $r\geq s$ the following estimates hold:  \be\label{TSB-I}\ba{l}t-\sigma\in(0,1)\,,\quad\dm \n \varphi(t )\dm_{L^r(\po)}\leq c \dm \varphi(\sigma)\dm_s(t-\sigma)^{-\mu_4 }\,, \VS t-\sigma>1\,,\quad \dm\n\varphi(t ))\dm_{L^r(\po)}\leq c\dm \varphi(\sigma)\dm_s(t-\sigma)^{-\mu_5}, \ea\ee  where $c$ is a constant  independent of $\varphi_0$ and $t-\sigma$\,, furthermore  we have set $\mu_4:= \frac12+\mu+\frac1{2r}$ and $\mu_5:=\left\{\hskip-0.2cm\ba{ll}\frac12+\mu &\mbox{if }r\in[s, n]\,,\vspace{2pt}\\   \frac n{2s}&\mbox{if }r\geq n\,, \ea \right.$.}\end{lemma}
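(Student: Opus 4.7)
The plan is to combine the $L^r$-trace inequality \rf{tr}$_1$, applied to $\n\varphi$ on a bounded collar $D$ of $\po$, with the interior estimates \rf{JPA}$_2$ and \rf{JPA}$_4$ of Lemma \ref{LI}.

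First I would fix a bounded Lipschitz subdomain $D\subset\OO$ with $\overline D\cap\po=\po$ and $\po\cap\partial(\OO\setminus D)=\emptyset$, and apply \rf{tr}$_1$ componentwise to $\n\varphi$. Bounding the $L^r(D)$-norms by the corresponding norms on $\OO$, this produces the master inequality
\[
\dm\n\varphi(t)\dm_{L^r(\po)}\leq c\dm\n\varphi(t)\dm_r+c\dm\n\varphi(t)\dm_r^{1/r'}\dm D^2\varphi(t)\dm_r^{1/r}.
\]
All that remains is to insert \rf{JPA}$_2$ and \rf{JPA}$_4$ with $q=r$ and balance the exponents.

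For $t-\sigma\in(0,1)$, Lemma \ref{LI} supplies $\dm\n\varphi\dm_r\leq c\dm\varphi(\sigma)\dm_s(t-\sigma)^{-\frac12-\mu}$ and $\dm D^2\varphi\dm_r\leq c\dm\varphi(\sigma)\dm_s(t-\sigma)^{-1-\mu}$. Inserting these, the interpolation term becomes $c\dm\varphi(\sigma)\dm_s(t-\sigma)^{-\frac12-\mu-\frac{1}{2r}}=c\dm\varphi(\sigma)\dm_s(t-\sigma)^{-\mu_4}$, and since $\mu_4>\tfrac12+\mu$ and $t-\sigma<1$ it dominates the linear term, yielding \rf{TSB-I}$_1$.

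For $t-\sigma>1$ I would split on the size of $r$. If $r\geq n$, both \rf{JPA}$_2$ and \rf{JPA}$_4$ are saturated at the rate $(t-\sigma)^{-n/(2s)}$, and the master inequality delivers \rf{TSB-I}$_2$ with $\mu_5=n/(2s)$. If $r\in[s,n]$, then $\dm\n\varphi\dm_r\leq c\dm\varphi(\sigma)\dm_s(t-\sigma)^{-\frac12-\mu}$, while $\dm D^2\varphi\dm_r\leq c\dm\varphi(\sigma)\dm_s(t-\sigma)^{-\alpha}$ with $\alpha=1+\mu$ if $r\leq n/2$ and $\alpha=n/(2s)$ if $r\in(n/2,n]$. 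In each subcase an elementary computation, using $\mu=\tfrac{n}{2}(\tfrac1s-\tfrac1r)$ and $r\leq n$, shows $\tfrac{1}{r'}(\tfrac12+\mu)+\tfrac{\alpha}{r}\geq\tfrac12+\mu$, so for $t-\sigma>1$ the interpolation term is again dominated by $c\dm\varphi(\sigma)\dm_s(t-\sigma)^{-\frac12-\mu}$, giving \rf{TSB-I}$_2$ with $\mu_5=\tfrac12+\mu$.

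The only delicate point is the long-time subcase $r\in(n/2,n]$, where $\n\varphi$ has not yet entered its saturated regime while $D^2\varphi$ has; there the required exponent inequality reduces precisely to $n\geq r$, which holds by hypothesis.
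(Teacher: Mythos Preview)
Your proof is correct and follows exactly the route the paper indicates: the paper's own proof is the single sentence ``Estimate \rf{TSB-I} is an immediate consequence of the trace inequality \rf{tr} and of estimates \rf{JPA}$_{2,4}$,'' and you have simply written out the exponent bookkeeping that this sentence encapsulates. Your case analysis and the observation that the borderline inequality in the range $r\in(n/2,n]$ reduces to $n\geq r$ are accurate.
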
\begin{proof} Estimate \rf{TSB-I} is an immediate consequence of the trace inequality \rf{tr} and of estimates \rf{JPA}$_{2,4}$.\end{proof}\begin{lemma}\label{PSl}{\sl Let $(\varphi,\pi_\varphi)$ be the solution of Lemma\,\ref{LI}. Then the pressure field $\pi_\varphi$ enjoys the estimates:\be \label{prss}\begin{array}{ll}
\lambda\in(0,1),\;\vert
\pi_\varphi\vert_{L^r(\OO\cap B_R)}\hskip-0.3cm&\leq c<\nabla
\varphi>^\lambda_{r}\,,\;
\;\vspace{6pt}\\\null\hskip2.8cm\vert \nabla \pi_\varphi\vert_r\hskip-0.3cm&\leq
c<\nabla\varphi>^{1-\frac1r}_r\,,\end{array}\ee with $c$ independent of $\varphi$. In particular, if $r>n\geq2$ we get \be\label{PPE}\ba{ll}\null&|\pi_\varphi(x)|\leq c(\dm \n\varphi\dm_r+\dm\n\n\varphi\dm_r)|x|^{2-n}\,,\quad |x|>R\,,\vspace{3pt}\\\mbox{for } n=2,&\pi_\varphi-\pi_\infty =o(\dm \n\varphi\dm_r+\dm\n\n\varphi\dm_r) \, .\ea\ee }\end{lemma}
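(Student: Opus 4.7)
The strategy is to exploit two facts: $\pi_\varphi$ is harmonic in $\OO$, and its Neumann data on $\po$ can be rewritten in terms of $\n\varphi|_\po$ alone. Taking the divergence of the momentum equation \rf{STI}$_1$ and using $\n\cdot\varphi=0$, one gets $\Delta\pi_\varphi=0$ in $\OO$. Since $\varphi=0$ on $\po$ implies $\varphi_t=0$ there, the equation restricted to the boundary gives $\partial_\nu\pi_\varphi=\Delta\varphi\cdot\nu$ on $\po$. The key algebraic step is to decompose $\Delta=\partial_\nu^2+\Delta_\tau+H\partial_\nu$ (with $H$ the mean curvature of $\po$) and use both $\varphi|_\po=0$ (so all tangential derivatives of $\varphi$ along $\po$ vanish) and $\n\cdot\varphi=0$ (so $\partial_\nu\varphi\cdot\nu=0$ on $\po$) to rewrite $\Delta\varphi\cdot\nu$ as a purely tangential first-order operator applied to $\partial_\nu\varphi|_\po$. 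This identifies the Neumann datum for $\pi_\varphi$ as a distribution in $W^{-1/r,r}(\po)$ whose norm is controlled by $<\n\varphi>^{1-1/r}_r$.

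With this in hand, the estimate \rf{prss}$_2$ follows from the standard solvability theory for the Neumann problem with a harmonic right-hand side: via a single-layer potential representation, one gets $\dm\n\pi_\varphi\dm_r\leq c\dm\partial_\nu\pi_\varphi\dm_{-1/r,r}\leq c<\n\varphi>^{1-1/r}_r$, which is exactly the claim; this is the content of the cited reference \cite{Gl} applied to $\pi_\varphi$. For \rf{prss}$_1$, the harmonicity of $\pi_\varphi$ provides interior elliptic regularity on $\OO\cap B_R$, so the local $L^r$-norm of $\pi_\varphi$ is dominated by any sufficiently regular boundary trace; the seminorm $<\n\varphi>^\lambda_r$ with arbitrary $\lambda\in(0,1)$ suffices because on the bounded set $B_R\cap\po$ one only needs a weak fractional smoothness of the Neumann data to run the single-layer estimate together with the Poincar\'e inequality that eliminates the free constant in the harmonic extension.

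For the asymptotic formulas \rf{PPE}, use that $\pi_\varphi$ is harmonic outside a bounded set and belongs to $L^r(\OO)$ with $r>n$; the classical expansion of a harmonic function in terms of the Newtonian kernel yields $\pi_\varphi(x)=A|x|^{2-n}+O(|x|^{1-n})$ for $n\geq3$, where $A$ is the flux of $\n\pi_\varphi$ across a large sphere. Via Lemma\,\ref{AES} and the trace inequality \rf{tr}, this flux is controlled by $\dm\n\varphi\dm_r+\dm\n\n\varphi\dm_r$, giving the first line of \rf{PPE}. In the case $n=2$ the fundamental solution is logarithmic, so the leading term is a constant $\pi_\infty$ at infinity and the remainder produces the $o(\cdot)$ estimate. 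The main obstacle is step one: justifying that $\Delta\varphi\cdot\nu$ on $\po$ is indeed a first-order tangential operator on $\n\varphi|_\po$ (not a genuine second-order normal derivative), which requires careful bookkeeping of the curvature terms and of the divergence-free condition on the boundary; once this identity is established the remaining ingredients are standard.
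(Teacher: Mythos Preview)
Your proposal is correct and follows essentially the same route as the paper. The paper's proof is a one-line citation: it identifies $\pi_\varphi$ as the solution of the exterior Neumann problem
\[
\Delta\pi_\varphi=0\ \mbox{in }\OO,\qquad \mbox{$\frac{d}{d\nu}$}\pi_\varphi=\nu\cdot\Delta\varphi\ \mbox{on }\po,
\]
with $\pi_\varphi\to0$ ($n\ge3$) or $\pi_\varphi\to\pi_\infty$ ($n=2$), and then invokes Solonnikov \cite{Sl} for both inequalities \rf{prss} and the pointwise decay \rf{PPE}. Your write-up supplies exactly the mechanism behind that citation: the tangential rewriting of $\nu\cdot\Delta\varphi$ (which is what makes the Neumann datum live in $W^{-1/r,r}(\po)$ and be dominated by the trace seminorm of $\n\varphi$) and the single-layer potential estimates. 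Two small remarks: the correct reference here is \cite{Sl}, not \cite{Gl}; and for the decay \rf{PPE} there is no need to invoke Lemma\,\ref{AES} --- the flux $\int_{\po}\partial_\nu\pi_\varphi$ is controlled directly by $\dm\n\varphi\dm_{W^{1,r}}$ through the trace inequality \rf{tr} alone, after which the harmonic expansion gives the $|x|^{2-n}$ behaviour.
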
\begin{proof} The estimates \rf{prss} are consequence of the results due to Solonnikov in \cite{Sl} related to the Neumann problem:
$$\ba{c}\Delta\pi_\varphi=0\mbox{ in }\OO\,,\quad \frac d{d\nu}\pi_\varphi=\nu\cdot \Delta\varphi\,\mbox{ on }\po\,,\\n=2,\;\pi_\varphi\to \pi_\infty\,,\;n\geq3,\;\pi_\varphi\to0\,,\;\;|x|\to\infty.\ea$$ 
  \end{proof}The following lemma furnishes the behavior in $t$ related to a trace-norm of the pressure field $\pi_\varphi$. The behavior   depends on the neighborhood of $t=0$ and of $t=\infty$. Of course, our task is to deduce   behavior that turns to be the best for our aims.  
 \begin{lemma}\label{PT}{\sl Let $(\varphi,\pi_\varphi)$ be the solution of Lemma\,\ref{LI}. Then, for $r\geq s$, set $\mu:=\frac n2\left(\frac 1s-\frac1r\right)$, we get
\be\label{PT-I}\ba{l}t-\sigma\in(0,1),\quad\dm \pi_\varphi(t)\dm_{L^r(\po)}\leq c\dm\varphi(\sigma)\dm_s(t-\sigma)^{-\rho_0-\mu}\,,\VS t-\sigma>1\,,\quad\dm \pi_\varphi(t)\dm_{L^r(\po)}\leq c\dm\varphi(\sigma)\dm_s(t-\sigma)^{-\rho_1-\mu}\,, \ea\ee where $c$ is a constant  independent of $\varphi $ and $t-\sigma$, and   $$\ba{l}\rho_0:=\frac12+ \frac{1+r(\lambda-1)}{2r^2(n-2+r)}+  \frac{r(1-\lambda)-1}{r(n-2+r)}+\frac{n-1+\lambda r}{2(n-2+r)}<1\,,\vspace{4pt}\\\rho_1:= \left\{\!\!\ba{ll}\frac12+\frac1{2d}&\mbox{if }r\in(1,\frac n2]\,,\\ \frac12+\frac1{2d}\!\left(\frac nr-1\right)&\mbox{if }r\in[\frac n2,n]\,,\\ \frac n{2r}&\mbox{if }r>n\,,\ea\right.\ea$$ where    $\lambda\in(0,1-\frac1r)$ and $d:=\frac{n-2+r}{n-1+\lambda r}$\,.} \end{lemma}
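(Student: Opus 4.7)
\textbf{Proof plan for Lemma \ref{PT}.} The idea is to convert the claimed boundary estimate on $\pi_\varphi$ into estimates already in hand: the interior bounds on the pressure from Lemma \ref{PSl} and the decay rates of Lemmas \ref{LI} and \ref{TSB}. Choose a bounded Lipschitz set $\Omega'\subset\Omega$ with $\overline{\Omega'}\cap\po=\po$ (for an exterior domain, $\Omega'=\Omega\cap B_R$ with $R$ large). Applying the multiplicative trace inequality \rf{tr}$_1$ to $h=\pi_\varphi$ yields
$$\dm \pi_\varphi\dm_{L^r(\po)}\leq c\bigl(\dm \pi_\varphi\dm_{L^r(\Omega')}+\dm \pi_\varphi\dm_{L^r(\Omega')}^{1/r'}\dm \nabla\pi_\varphi\dm_{L^r(\Omega')}^{1/r}\bigr).$$
By Lemma \ref{PSl}, the first factor is bounded by $\langle\n\varphi\rangle^\lambda_r$ and $\dm\n\pi_\varphi\dm_r$ by $\langle\n\varphi\rangle^{1-1/r}_r$, reducing matters to estimating these two Gagliardo seminorms on $\po$.

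Next, interpolate on the $(n-1)$--dimensional boundary. The Gagliardo--Slobodeckij interpolation on $\po$ gives, for $\lambda\in(0,1-\tfrac1r)$,
$$\langle\n\varphi\rangle^\lambda_r\leq c\,\dm \n\varphi\dm_{L^r(\po)}^{1-\theta}\bigl(\langle\n\varphi\rangle^{1-1/r}_r\bigr)^\theta,$$
where the balance of scaling in the integrand $|x-y|^{-(n-1)-\lambda r}$ versus $|x-y|^{-(n-1)-(r-1)}$ forces $\theta=(n-1+\lambda r)/(n-2+r)=1/d$, which is exactly the quantity $d$ appearing in the statement of the lemma. Then the trace inequality \rf{tr}$_2$ controls the second factor:
$$\langle\n\varphi\rangle^{1-1/r}_r\leq c\bigl(\dm \n\varphi\dm_{L^r(\Omega')}+\dm D^2\varphi\dm_{L^r(\Omega')}\bigr).$$

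At this point I substitute the time-decay estimates. Lemma \ref{TSB} gives $\dm \n\varphi\dm_{L^r(\po)}\leq c\dm \varphi(\sigma)\dm_s (t-\sigma)^{-\mu_4}$ for small times and $\dm\n\varphi(t)\dm_{L^r(\po)}\le c\dm \varphi(\sigma)\dm_s (t-\sigma)^{-\mu_5}$ for large times, while Lemma \ref{LI} controls $\dm \n\varphi\dm_r$ and $\dm D^2\varphi\dm_r$ with exponents $\mu_1$ and $\mu_3$ (each piecewise in $r$ vs $n,n/2$). The exponent produced on the right-hand side of the trace inequality is therefore a weighted combination
$$\tfrac{1}{r'}\bigl[(1-\theta)\mu_4+\theta\,\mu_3^{(trace)}\bigr]+\tfrac1r\bigl[(1-\theta)\mu_4+\theta\,\mu_3^{(trace)}\bigr]\cdot(\cdots)$$
coming from the two factors in the trace inequality; carrying out the arithmetic with $\theta=1/d$ produces the four summands making up $\rho_0$ in the short-time regime, and the piecewise formula for $\rho_1$ in the long-time regime (where the three cases reflect, in order, $r\in(1,n/2]$, $r\in[n/2,n]$, $r>n$ from the piecewise definitions of $\mu_1$ and $\mu_3$).

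\textbf{Main obstacle.} The analytical content — trace, pressure bounds, interpolation — is standard; the delicate step is the bookkeeping of exponents. One has to verify that $\rho_0<1$ (so that the estimate is integrable near $t=\sigma$, a property that will be used in subsequent applications) and that the choice $\theta=1/d$ is simultaneously admissible in the Gagliardo interpolation and optimal in the sense that the $\rho_1$ obtained matches the scaling expected from the sharp exponents of Lemma \ref{LI}. The verification in the range $r>n$ is easier because the pressure pointwise decay \rf{PPE} and the estimates \rf{JPA}$_{2,4}$ with the long-time exponent $n/(2s)$ combine directly, giving $\rho_1=n/(2r)$ without need of interpolation in $\lambda$.
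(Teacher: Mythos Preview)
Your plan is correct and follows the paper's own route essentially step for step: trace inequality \rf{tr}$_1$ applied to $\pi_\varphi$, the pressure bounds \rf{prss} to pass to the boundary seminorms $\langle\nabla\varphi\rangle^\lambda_r$ and $\langle\nabla\varphi\rangle^{1-1/r}_r$, the interpolation between these two with weight $\theta=1/d$ (the paper obtains it by a direct H\"older splitting of the double integral rather than quoting it as Gagliardo--Slobodeckij interpolation, but the outcome is identical), then \rf{tr} again on $\nabla\varphi$ and finally the time decay from \rf{JPA}$_{2,4}$.

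Two small remarks. First, the paper does not invoke the pointwise pressure decay \rf{PPE} for the case $r>n$; it runs the same $I_1+I_2$ machinery throughout and simply observes that for $t>1$ and $r>n$ the minimum among the resulting exponents is $n/(2r)$. Second, the paper makes the reduction to $\sigma=0$, $s=r$ explicit at the outset (then recovers the general $s\le r$ via the semigroup property \rf{JPA}$_1$), and it carries out the exponent arithmetic in full, including the verification $\rho_0<1\Leftrightarrow\lambda<1-\tfrac1r$; your sketch leaves this bookkeeping implicit, but the structure is the same.
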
\begin{proof}  
  \vskip
0.05 true cm\par\noindent We prove estimates \rf{PT-I} for $\sigma=0$ and $s=r$. Subsequently, one deduces estimates \rf{PT-I} in a complete form by means of the semigroup properties of $\varphi$.  Assuming in \rf{prss} $\lambda<1-\frac 1r,$
applying
H\"older's inequality with exponents $(d,\frac
d{d-1}),d=\frac{n-2+r}{n-1+\lambda r},$ (we stress that $1<d<r$) we get\be\label{trs}\begin{array}{ll}\displ\big(
\! \! <\nabla \varphi>^\lambda_r\! \!
\big)^r\hskip-0.3cm&= \displ\!  \intl \po \intl
\po\! \vert\nabla
\varphi(x)-\!\nabla\varphi(y)\vert
^{r(1-\frac1d)}
\frac{\vert\nabla\varphi(x)-\! \nabla\varphi(y)
\vert^{\frac rd\hskip-0.2cm\null}}{\vert
x{-}y\vert^{n-1+\lambda
r}}d\sigma_xd\sigma_y \vspace{7pt}\VSE \leq
c\vert\nabla\varphi\vert_{L^r(\po)}^{r(1-
\frac1d)} (<\nabla
\varphi>^{1-\frac1r}_r)^{\frac rd}\end{array}
\ee Employing  the trace inequality   \rf{tr}$_1$ and estimates
\rf{prss}-\rf{trs},
$(1-\frac1d=\frac1{d'}$ and $1-\frac
1r=\frac1{r'})$ we deduce
\be\label{traa}\begin{array}{l} \dm
\pi_\varphi\dm_{L^r(\po)}\leq c\dm  
\pi_\varphi\dm _{L^r(\OO\cap S_R)}+\dm 
\pi_\varphi\dm _{L^r(\OO\cap S_R)}^{\frac
1{r'\hskip-0.1cm\null}\hskip0.1cm}\dm  \nabla \pi_\varphi\dm _{L^r(\OO\cap
S_R)}^{\frac1r}\vspace{6pt}\\\hskip 3 true
cm \leq c\big(\!<\!
\nabla\varphi>^\lambda_r+(<\! \nabla\varphi >^
\lambda_r)^{\frac1{r'\hskip-0.1cm\null}\hskip0.1cm}(<\!
\nabla\varphi>_r^{\frac1{r'\hskip-0.1cm\null}\hskip0.1cm})
^{\frac1r}\big)\vspace{7pt}\\
\leq\!
c\Big[\dm \nabla\varphi \dm _{_{L^r(\po)}}^
{\frac1{d' \hskip-0.1cm\null}\hskip0.1cm} (<\! \nabla \varphi\!
>^{\frac1{r'\hskip-0.1cm\null}\hskip0.1cm}_r)^{\null^\frac1
d}\!\! +\!\dm \nabla\varphi\dm _{_{L^r(\po)}}^
{\frac1{d'\hskip-0.1cm\null}\hskip0.1cm\frac1{r'\hskip-0.1cm\null}\hskip0.1cm} (<\! \nabla \varphi\!
>^{\frac1{r'\hskip-0.1cm\null}\hskip0.1cm}_r)^{\null^{\frac1
d\frac1{r'\hskip-0.1cm\null}\hskip0.1cm}}\!(<\! \nabla \varphi\!
>^{\frac1{r' \hskip-0.1cm\null}}_r)^{\null^\frac1
r}\Big]\vspace{6pt}\\\hskip 4 true cm
=I_1(r,t)+I_2(r,t).
\end{array}\ee
Employing again  the trace inequality 
\rf{tr}$_2$, we get
\be\label{trbb}\begin{array}{ll}\null  I_1(r,t)& \hskip-0.3cm\leq
c\big(\dm  \nabla\varphi   \dm _{\null_{L^r(\OO\cap
S_R)}} +
\dm \nabla\varphi \dm _{\null_{L^r(\OO\cap
S_R)}}^{\frac 1{r'\hskip-0.1cm\null}\hskip0.1cm}\dm  D^2
\varphi \dm _{\null_{L^r(\OO)}}^{\frac1r}\big)^{\frac
1{d'\hskip-0.1cm\null}\hskip0.1cm}\dm 
D^2\varphi \dm _{\null_{L^r(\OO)}}^{\frac1d} \big)\VS & \hskip-0.3cm\leq
c\big(\dm \nabla\varphi 
\dm _{\null_{L^r(\OO\cap
S_R)}}^{\frac1{d'\hskip-0.1cm\null}\hskip0.1cm}\dm 
D^2\varphi \dm _{\null_{L^r(\OO)}}^\frac1d+
\dm \nabla\varphi \dm _{\null_{L^r(\OO\cap
S_R)}}^{\frac1{r'\hskip-0.1cm\null}\hskip0.1cm\frac1{d'\hskip-0.1cm\null}\hskip0.1cm}\dm  D^2
\varphi \dm _{\null_{L^r(\OO)}}^{\nu}\big)\vspace{2pt}\VS 
 I_2(r,t)&\hskip-0.3cm\leq
c\big(\dm \nabla\varphi \dm _{\null_{L^r(\OO\cap
S_{\!R})}}\! \! +
\dm \nabla\varphi \dm _{\null_{L^r(\OO\cap
S_{\!R})}}^{\frac 1{r'\hskip-0.1cm\null}\hskip0.1cm}\dm  D^2
\varphi \dm _{\null_{L^r(\OO)}}^{\frac1r}\big)^{\frac
1{d'\hskip-0.1cm\null}\hskip0.1cm\frac1{r'\hskip-0.1cm\null}\hskip0.1cm}\dm 
D^2\varphi \dm _{\null_{L^r(\OO)}}^{\nu
} \VS &\hskip-0.3cm\leq c\big(\dm \nabla\varphi 
\dm _{\null_{L^r(\OO\cap
S_{\!R})}}^{\frac1{d'\hskip-0.1cm\null}\hskip0.1cm\frac1{r'\hskip-0.1cm\null}\hskip0.1cm}\dm 
D^2\varphi \dm _{\null_{L^r(\OO)}}^{\nu}\! \!
+\!\dm \nabla\varphi \dm _{\null_{L^r(\OO\cap
S_{\!R})}}^{ (\frac1{r'\hskip-0.1cm\null}\hskip0.1cm\! )^2\frac1{d'}}\dm 
D^2
\varphi \dm _{\null_{L^r(\OO)}}^{\nu_1+\nu}\big) \VS\mbox{where}&\mbox{\hskip-0.28cm we have set } 
   \nu :=\mbox{$\frac1d\frac1{r'\hskip-0.1cm\null}\hskip0.1cm+
\frac1r$ and  $\nu_1:=\frac1r\frac
1{d'\hskip-0.1cm\null}\hskip0.1cm\frac1{r'\hskip-0.1cm\null}\hskip0.1cm$}\,.\ea \ee
\vskip 0.05 true cm \par \noindent
For the right hand side of \rf{trbb} we look for an estimate in $t$ and $\dm \varphi _0\dm _{r}$.  
We firstly evaluate
$I_1(r,t)$ e $I_2(r,t)$ for
 $t\in(0,1).$ We estimates the terms on the right hand side of \rf{trbb} by inequalities \rf{JPA}$_{2,4}$. Since we evaluate for $t\in (0,1)$, we can limit ourselves to consider the terms on the right hand side of 
\rf{JPA}$_{2,4}$ which have max exponent. This max exponent is leaded by the last term of $I_2(r,t)$. Hence we have  \be\label{opi1}I_1(r,t)+I_2(r,t)\leq c\dm \varphi_0 \dm_rt^ {-\rho_0},\;t\in(0,1),\ee  where we have set $\rho_0:=\frac 12\frac1{r'}\frac1{r'}\frac1{d'}+\frac1r\frac1{r'}\frac1{d'}+\frac1d\frac1{r'}+\frac1r$\,. We compute $\rho_0$. Recalling that $r',\,d'$ are the coniugate exponents of $r,\,d$, we get\,\footnote{\, Actually it holds$$\ba{ll}\rho_0\hskip-0.2cm&=\left(\frac1{2r'}+\frac1{r}\right)\frac1{d'r'}+\frac1{dr'}+\frac1r=\left(\frac12+\frac1{2r}\right)\frac1{d'r'}+\frac1{dr'}+\frac1r \vspace{3pt}\\&=\frac12\left(1+\frac1{r}\right)\frac1{r'}-\frac1{2d}\left(1+\frac1r\right)\frac1{r'}+\frac1{dr'}+\frac1r\ea$$ that   leads  \rf{com} substituting again $\frac1{r'}$ with $1-\frac1r$\,.}
\be\label{com}\mbox{$\rho_0=\frac12 +\frac12\frac1{r^2}\left(\frac1d-1\right)+\frac1r\left(1-\frac1d\right)+\frac1{2d}$}\,.\ee By the definition of $d$, we have that \rf{com} is equivalent to  $$\mbox{$\rho_0=\frac12+ \frac{1+r(\lambda-1)}{2r^2(n-2+r)}+  \frac{r(1-\lambda)-1}{r(n-2+r)}+\frac{n-1+\lambda r}{2(n-2+r)}\,.$}$$ We are interested to verify that under our assumption on $\lambda$ and for $r>1$ we get $\rho_0<1$\,, that is
\be\label{comi}\mbox{$\frac12+ \frac{1+r(\lambda-1)}{2r^2(n-2+r)}+  \frac{r(1-\lambda)-1}{r(n-2+r)}+\frac{n-1+\lambda r}{2(n-2+r)}<1$}\,.\ee
   For $\vep>0$, we set $r:=1+\vep$. Hence \rf{comi} becomes equivalent to\,\footnote{\, Estimate \rf{comi} is equivalent to \mbox{$  \frac{1+r(\lambda-1)}{2r^2(n-2+r)}+  \frac{r(1-\lambda)-1}{r(n-2+r)}+\frac{n-1+\lambda r}{2(n-2+r)}<\frac12$}, which is equivalent to  $   1+r(\lambda-1) +  2r^2  (1-\lambda)-2r   <  -r^2+(1-\lambda)r^3$. Introducing $r:=1+\vep$ we obtain the first of \rf{comii}\,.  }
\be\label{comii}\lambda < (1-\lambda)\vep \;\Leftrightarrow\;\lambda<(1-\lambda)(r-1) \;\Leftrightarrow\;\lambda<1-\frac1r\,.\ee Since it is $\lambda<1-\frac1r$\,, we have verified \rf{comi}.
Now we look for the estimate of $I_1(r,t)$ and $I_2(r,t)$ for $t>1$. Since exponent   in \rf{JPA}$_{2,4}$ depends on $r$ we distinguish the cases of $r\in (1,\frac n2]$ from the ones of $r\in[\frac n2,n]$ and $r>n$. Suppose $r\in(1,\frac n2]$.  Since $t>1$  we look for exponents minimum. Hence, evaluating the right hand side of \rf{trbb} via the estimates \rf{JPA}$_{2,4}$, we get \be\label{opi2}r\mbox{$\in(1,\frac n2]$}\,,\quad 
I_1(r,t)+I_2(r,t) \leq c\dm \varphi _0\dm_{r}t^{-\rho_1},t>1\ee where we have set $\rho_1:=\frac12+\frac1{2d}$\,.  In the case of $r\in[\frac n2,n]$, for the right hand side of \rf{trbb}, we obtain the exponents:
$$I_1(r,t)\leq c\dm \varphi _0\dm_r\big(t^{-\rho_{11}}+t^{-\rho_{12}}\big)\,,\;t>1,$$
with $\rho_{11}:=\frac12+\frac1{2d}\left(\frac nr-1\right)$ and $\rho_{12}:=
\rho_{11}+\frac1{2r}\left(\frac nr-1\right)\left(1-\frac1d\right)$, as well $$I_2(r,t)\leq c\dm \varphi _0\dm_r\big(t^{-\rho_{21}}+t^{-\rho_{22}}\big)\,,\;t>1,$$
with $\rho_{21}:=\rho_{12}$ and $\rho_{22}:=\rho_{11}+\frac1{r}\left(\frac nr-1\right)\left(1-\frac1d\right)\left(1-\frac1{2d}\right)$. Hence choosing the minimum exponent we get \be\label{opi3}r\in\mbox{$[\frac n2,n]$},\quad I_1(r,t)+I_2(r,t)\leq c\dm \varphi _0\dm_rt^{-\rho_2}\,,\; t>1,\ee where we have set $\rho_2:=\rho_{11}$. Now we consider the case of $r>n$. Summing the  exponents of terms on the right hand side of \rf{trbb} we get   $\frac n{2r}$  as minimum exponent. Hence we get
\be\label{opi4}r>n\,,\quad I_1(r,t)+I_2(r,t)\leq c\dm \varphi  _0\dm_rt^{-\frac n{2r}}\,,\;t>1\,.\ee Finally, via estimates \rf{traa}-\rf{opi1} and \rf{traa} with \rf{opi2}-\rf{opi4}, we get\be\label{tr11}  
\begin{array}{l} \dm \pi_\varphi(t)\vert_{ {L^{  r}(\po)}}  \leq   c\dm \varphi _0\dm_{r}t^{-\rho_0} ,\;\forall
t\! \in\! (0,1]\mbox{ and for all }r>1,  \VS \dm
\pi_\varphi(t)\dm_{\null_{L^r(\po)}}  \leq 
c\dm\varphi _0\dm_rt^{-\rho_1}  ,\;\forall t 
\geq1\,.
\end{array}\ee
\end{proof}\begin{rem}\label{RSt}{\rm We point out that $\rho_0>\rho_1$. Moreover comparing  estimates \rf{TSB-I} with the estimates \rf{PT-I},  related to $\n \varphi$ and to $\pi_\varphi$ respectively, we note that,  in a neighborhood of $t=0$, $\rho_0+\mu>\frac12+\mu$ for all $r\geq s$,   as well, in neighborhood of infinity, if $r<n $ then $\rho_1+\mu>\frac12+\mu$, and if $r\geq n$ then  $\rho_1+\mu=\frac n{2s}$.}\end{rem}
The following result holds:\begin{coro}\label{StTs}{\sl 
Let $(\varphi,\pi_\varphi)$ be the solution furnished in Lemma\,\ref{LI}, then for $r\geq s$, set $\mu:=\frac n2\left(\frac1s-\frac1r\right)$\,, we get
\be\label{StTs-I} \dm T(\varphi,\pi_\varphi)(t)\dm_{L^r(\po)}\leq c\dm \varphi(\sigma)\dm_s \!\left\{\!\!\!\!\ba{ll}(t\!-\!\sigma)^{-\rho_0-\mu}&\mbox{if }t\!-\!\sigma\in(0,1)\,,\\(t\!-\!\sigma)^{-\frac12-\mu}&\mbox{if }t\!-\!\sigma>1\mbox{ and }r\!\leq\! n\,,\\ (t\!-\!\sigma)^{-\frac n{2s}}&\mbox{if }t\!-\!\sigma>1 \mbox{ and }r\!>\!n.\ea\right.\ee Finally, we also have
\be\label{StTs-ID} \dm T(\varphi_t,\pi_{\varphi_t})(t)\dm_{L^r(\po)}\!\leq\! c\dm \varphi(\sigma)\dm_s \!\left\{\!\!\!\!\ba{ll}(t\!-\!\sigma)^{-\rho_0-\mu-1}\hskip-0.15cm&\mbox{if }t\!-\!\sigma\in(0,1)\,,\\(t\!-\!\sigma)^{-\frac32-\mu}&\mbox{if }t\!-\!\sigma\!>\!1\mbox{\,and }r\!\leq\! n,\\ (t\!-\!\sigma)^{-\frac n{2s}-1}&\mbox{if }t\!-\!\sigma\!>\!1 \mbox{\,and }r\!>\!n.\ea\right.\ee}\end{coro}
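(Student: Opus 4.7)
The plan is to reduce the estimate to the boundary bounds of $\n\varphi$ and $\pi_\varphi$ already obtained in Lemmas \ref{TSB} and \ref{PT}. Since $T(\varphi,\pi_\varphi)$ is pointwise controlled by $|\n\varphi|+|\pi_\varphi|$, on $\po$ one has
\[\dm T(\varphi,\pi_\varphi)(t)\dm_{L^r(\po)}\leq c\bigl(\dm\n\varphi(t)\dm_{L^r(\po)}+\dm\pi_\varphi(t)\dm_{L^r(\po)}\bigr).\]
Combining the two lemmas I obtain, in each regime of $t-\sigma$, a sum of two powers of $(t-\sigma)$ times $\dm\varphi(\sigma)\dm_s$. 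To collapse this sum to the single power \rf{StTs-I} I would retain the slower-decaying contribution in each regime: for $t-\sigma\in(0,1)$ the pressure exponent $\rho_0+\mu$ swallows the gradient-trace exponent $\frac12+\mu+\frac1{2r}$ (a quantitative sharpening of the inequality recorded in Remark \ref{RSt}); for $t-\sigma>1$ with $r\leq n$ the gradient bound $(t-\sigma)^{-\frac12-\mu}$ is the slower-decaying one and dominates the pressure bound $(t-\sigma)^{-\rho_1-\mu}$; for $r>n$ the two bounds coincide at $(t-\sigma)^{-\frac{n}{2s}}$. Adding up the three cases yields \rf{StTs-I}.

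For \rf{StTs-ID} I would invoke the analyticity of the Stokes semigroup: the pair $(\varphi_t,\pi_{\varphi_t})$ is again a solution of \rf{STI}, with initial datum $\varphi_t(\tau)$ at any time $\tau\in(\sigma,t)$. Applying the already-proved \rf{StTs-I} to $(\varphi_t,\pi_{\varphi_t})$ with intermediate time $\tau:=(t+\sigma)/2$, and bounding $\dm\varphi_t(\tau)\dm_s\leq c(\tau-\sigma)^{-1}\dm\varphi(\sigma)\dm_s$ via \rf{JPA}$_3$, gives the three branches of \rf{StTs-ID} directly from those of \rf{StTs-I} after the extra factor $(t-\sigma)^{-1}$ is picked up; the identities $t-\tau=\tau-\sigma=(t-\sigma)/2$ ensure that this multiplicative split does not shift the regime boundary $t-\sigma=1$ in a problematic way.

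The only genuine algebraic point --- and the one I expect to occupy most of the bookkeeping --- is the dominance $\rho_0+\mu\geq\frac12+\mu+\frac1{2r}$ used in the small-time regime, since Remark \ref{RSt} records only the weaker $\rho_0+\mu>\frac12+\mu$. This can be verified by a direct check on the closed-form expression of $\rho_0$ displayed in Lemma \ref{PT}, completely analogous to the manipulations leading from \rf{comi} to \rf{comii} in the proof of that lemma.
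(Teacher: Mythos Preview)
Your proposal is correct and follows the same route as the paper, which simply records the corollary as ``an immediate consequence'' of the boundary estimates for $\nabla\varphi$ and $\pi_\varphi$ together with Remark~\ref{RSt}. Your observation that Remark~\ref{RSt} only gives $\rho_0>\frac12$ while the small-time comparison actually requires $\rho_0\geq\frac12+\frac1{2r}$ is well taken; the sharper inequality does hold (from \rf{com} one finds $\rho_0-\frac12-\frac1{2r}=\frac{r-1}{2r^2}\bigl(\frac{r-1}{d}+1\bigr)>0$), and your semigroup argument with midpoint $\tau=(t+\sigma)/2$ for \rf{StTs-ID} is the natural way to supply the detail the paper omits.
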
\begin{proof} The proof is an immediate consequence of Lemma\,\ref{PSl}, Lemma\,\ref{PT} and Remark\,\ref{RSt}.\end{proof} 
\begin{lemma}\label{AX}{\sl Let $(\varphi,\pi_\varphi)$ be a solution to problem \rf{STI} given by Corollary\,\ref{CSP}. Then, for $s\geq\frac n2$, there is no function $\xi(t)$ such that 
\be\label{OPT-II}t^{-1}\xi(t)\in L^1(t_0,\infty)\mbox{ and }\dm \n \varphi(t)\dm_{L^q(\OO\cap S_R)}\leq \xi(t) t^{-\mu_1} \dm \varphi_0\dm_s \,,\;t>0\,,\ee where $R>diam(\OO^c)$ and $\xi$ are independent of $\varphi$ and $\mu_1$ is given in \rf{JPA}$_2$.   }\end{lemma}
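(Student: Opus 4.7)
The plan is to argue by contradiction. Suppose such a function $\xi$ exists, with $t^{-1}\xi(t) \in L^1(t_0,\infty)$ and $\dm \n \varphi(t)\dm_{L^q(\OO\cap S_R)} \leq \xi(t)\, t^{-\mu_1}\dm\varphi_0\dm_s$ for every $\varphi_0 \in \mathscr{C}_0(\OO)$.

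The central idea is to use a nontrivial stationary Stokes field as a test object. Since $s \geq \frac n2$, the conjugate range $s' \leq \frac{n}{n-2}$ paired with Theorem\,\ref{SSTPT} yields, for a suitably matched exponent $p^* \geq n$, a nontrivial solution $(V,\pi_V) \in J_0^{p^*}(\OO) \times L^{p^*}(\OO)$ of the homogeneous stationary Stokes system. Classical potential-theoretic asymptotics give $V(x) = a + O(|x|^{2-n})$ and $\pi_V(x) = O(|x|^{1-n})$ at infinity, with $\nabla V \in L^{p^*}$. Now fix any $\varphi_0 \in \mathscr{C}_0(\OO)$ and apply the Green identity \rf{GIF} on the bounded region $\OO \cap B_R$, pairing $\varphi(t)$ with the field generated by $V$ via the adjoint problem. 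The boundary contributions on $\po$ vanish thanks to $\varphi=V=0$, and what remains, after a time integration on $(t_0,\infty)$, is a ``moment'' identity expressing $(\varphi_0, V)$ (well-defined by compact support of $\varphi_0$) in terms of time-integrated boundary integrals on $\partial B_R$ involving the traces of $\nabla\varphi$, $\pi_\varphi$, $V$, $\pi_V$.

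The contradiction is extracted by bounding these time-integrated terms via the hypothesis on $\xi$. Since $\mu_1 \geq 0$ one has $\int_{t_0}^\infty \xi(t)\, t^{-\mu_1-1}\,dt<\infty$, and this, coupled with the trace/pressure decay of Lemmas\,\ref{TSB},\,\ref{PSl},\,\ref{PT} and Corollary\,\ref{StTs}, forces the boundary integrals on $\partial B_R$ to be summable in time. Letting $R\to\infty$ and exploiting the asymptotics of $V,\pi_V$ (together with the duality-type tools of Lemma\,\ref{CLM} to handle the zero-flux structure) yields $(\varphi_0, V) = 0$ for every $\varphi_0 \in \mathscr{C}_0(\OO)$. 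By density this forces $V \equiv 0$, contradicting Theorem\,\ref{SSTPT}.

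The hardest step is the limit $R\to\infty$ in the Green identity: $V$ need not decay to zero (the constant $a$ may be nonzero) and $\pi_\varphi$ decays only mildly by Lemma\,\ref{PSl}, so the parasitic boundary-at-infinity contributions must be shown to either cancel or vanish. This requires tracking leading asymptotics of $\varphi(t)$ and matching them against those of $V$ and $\pi_V$; the pressure trace estimates of Lemma\,\ref{PT} are the sharp input needed for the matching to go through without losing the logarithmic margin encoded in the condition $t^{-1}\xi(t)\in L^1$.
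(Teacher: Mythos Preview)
Your proposal has a genuine gap rooted in the choice of auxiliary field. You take the field $V$ of Theorem\,\ref{SSTPT}, which satisfies $V=0$ on $\partial\Omega$. Precisely because $V$ vanishes on $\partial\Omega$, the Green identity on $\Omega\cap B_R$ produces \emph{no} boundary contribution on $\partial\Omega$; all surviving surface terms live on $\partial B_R$. But the hypothesis \rf{OPT-II} controls $\nabla\varphi$ only on the fixed bounded region $\Omega\cap S_R$ near $\partial\Omega$, and the lemmas you invoke (Lemma\,\ref{TSB}, Lemma\,\ref{PT}, Corollary\,\ref{StTs}) give trace estimates on $\partial\Omega$, not on large spheres $\partial B_R$. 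So the assumed bound on $\xi$ never enters your argument. In fact, since $V$ is a stationary Stokes solution with zero trace, the identity you would obtain in the limit is just the conservation $(\varphi(t),V)=(\varphi_0,V)$, which is a tautology, not a contradiction; moreover this pairing is not even absolutely convergent because $V$ tends to a nonzero constant at infinity while $\varphi(t)$ is only known to lie in $\cap_{r>1}L^r$.

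The paper proceeds with the opposite choice: it introduces the exterior Stokes solution $\Phi$ of \rf{STS} with \emph{nonzero} boundary data $\Phi=a$ on $\partial\Omega$ and $\Phi\to0$ at infinity, so that $\Phi\in L^q$ for every $q>\frac{n}{n-2}$. Multiplying by $\varphi$ and integrating, the surface terms now sit on $\partial\Omega$ and involve $\nabla\varphi|_{\partial\Omega}$ and $\pi_\varphi|_{\partial\Omega}$. Via \rf{OPT-IV} and \rf{SII} these traces are controlled by $\dm\nabla\varphi\dm_{L^q(\Omega\cap S_R)}$ and $\dm\varphi_t\dm_q$, so the hypothesis \rf{OPT-II} makes the time integral on $(t_0,\infty)$ finite; letting $t\to\infty$ with \rf{FPL} yields $|(\Phi,\varphi_0)|\le c\dm\varphi_0\dm_{n/2}$ for all $\varphi_0\in\mathscr C_0(\Omega)$. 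Lemma\,\ref{PM} then forces $\Phi\in L^{n/(n-2)}(\Omega)$ for \emph{arbitrary} $a\in C^2(\partial\Omega)$, which is false. The essential point you are missing is that the auxiliary field must carry a nonzero trace on $\partial\Omega$ so that the Green identity converts the hypothesis on $\nabla\varphi$ near $\partial\Omega$ into a duality bound; Theorem\,\ref{SSTPT} plays no role here.
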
\begin{proof} Let $\varphi_0\in \mathscr C_0(\OO)$ and let $(\varphi,\pi_\varphi)$ be the solution ensured by Corollary\,\ref{CSP}. 
Employing \rf{tr} and subsequently \rf{SAE},   for all $R>diam(\OO^c)$, we have
\be\label{OPT-IV}\ba{ll}\dm \n \varphi(t)\dm_{L^{ q}(\po)}\hskip-0.2cm& \leq c(\dm \n\varphi(t)\dm_{L^q(\OO\cap S_R)}+\dm \n \varphi(t)\dm_{L^q(\OO\cap S_R)}^{\frac1{q'\hskip-0.1cm \null}}\dm D^2\varphi(t)\dm_{L^q(\OO\cap S_R)}^{\frac1{ q}})\\& \leq c(\dm \n\varphi(t)\dm_{L^q(\OO\cap S_R)}+ \dm P\Delta\varphi(t)\dm_{ q} + \dm \varphi(t)\dm_{L^q(\OO\cap S_R)} )\\&\leq  c(\dm \n\varphi(t)\dm_{L^q(\OO\cap S_R)}+ \dm \varphi_t(t)\dm_{ q} )\,. \ea \ee
We point out that\begin{itemize}\item  estimating the penultime row, for the second term, we toke into account   that      the equation \rf{STI}$_1$ furnishes $P\Delta \varphi=\varphi_t$,\item  since in \rf{SAE} $\OO'$ is bounded with $\po\cap\po'=\po$ we can choose $\OO'\equiv\OO\cap S_R$,         and,      for the third term, we employed the Poincar\'e inequality.\end{itemize}  Estimatye \rf{traa} ensures
$$\dm \pi_\varphi(t)\dm_q\leq I_1(t)+I_2(t)\,,\mbox{ for all }t>0\,.$$
 Recalling estimate \rf{trbb}, computing the exponents  and by making use of Young inequality, we get 
$$ I_1(q,t)+I_2(q,t) \leq c(\dm \n \varphi(t)\dm_{L^q(\OO\cap S_R)} +\dm D^2\varphi(t)\dm_q)\,. $$
By the same arguments employed in the previous computation for the $D^2\varphi$   we   obtain
\be\label{SII} \dm\pi_\varphi(t)\dm_q\leq  I_1(q,t)+\!I_2(q,t)   \leq c (  \dm \n\varphi(t) \dm_{L^q(\OO\cap S_R)}+ \dm \varphi_t(t) \dm_{ q}) 
,\mbox{ for all }t>0. \ee    
Now we are in a position to prove the lemma.  We   adapt the idea already employed in \cite{MS-II}. Let consider the exterior problem\be\label{STS} \Delta \Phi=\n P\,,\quad\n\cdot\Phi=0 \mbox{\; in \;}\OO,\;\Phi=a\mbox{ on }\po\,,\;\Phi\to0\mbox{ for }|x|\to\infty\,.\ee It is well known that assuming $a\in C^2(\po)$ there exists a solution such that $\Phi=O(|x|^{-n+2})$ at infinity. Hence $\Phi\in L^q(\OO)$ with arbitrary $q>\frac n{n-2}$. Our task is to prove that if   
\rf{OPT-II} is true, then the following holds: \be\label{OP-I}
|(\Phi,\psi_0)|\leq c\dm \psi_0\dm_{\frac n2}\,,\quad \psi_0\in \mathscr C_0(\OO)\,.\ee By virtue of Lemma\,\ref{PM}
this last implies that $\Phi\in L^\frac n{n-2}(\OO)$ for arbitrary boundary data $a\in C^2(\OO)$, which is impossible. Hence \rf{OPT-II} can not be true. Now our task is to prove \rf{OP-I} via the assumption \rf{OPT-II}. Assuming that   estimate    holds  for the solution $(\varphi,\pi_\varphi)$ to problem \rf{STI} with initial data $\varphi_0\in \mathscr C_0(\OO)$,   multiplying equation \rf{STS}$_1$ by $\varphi$ and integrating on $(0,T)\times \OO$, we obtain:
$$\ba{ll}(\Phi,\varphi_0)\hskip-0.2cm&\displ=(\Phi,\varphi(t))-\intll0{t_0}\Big[(a,\nu\cdot \n\varphi)_{\po}-(a\cdot\nu, \pi_\varphi)_\po\Big] d\tau\\&\hskip3.8cm+\displ\intll{t_0}t\Big[(a,\nu\cdot \n\varphi)_{\po}-(a\cdot\nu, \pi_\varphi)_\po\Big] d\tau\,.\ea $$  
Applying H\"older's inequality, for $\ov q>\frac n{n-2}$\,, we get \be\label{IA-O}\ba{ll}|(\Phi,\varphi_0)|\hskip-0.2cm&\displ\leq\dm \Phi\dm_{\ov q}\dm \varphi(t)\dm_{\ov q'}+c \dm a\dm_\infty\!\!\intll0{t_0}\!\Big[\dm \n\varphi\dm_{L^\frac n2(\po)}+\dm \pi_\varphi\dm_{L^\frac n2(\po)}\Big] d\tau\\&\hskip3.3cm+c\dm a\dm_\infty\displ\!\!\intll{t_0}t\!\Big[\dm \n\varphi\dm_{L^q(\po)}+\dm\pi_\varphi\dm_{L^q(\po)}\Big] d\tau,\ea \ee  here, assuming $\varphi_0\in \mathscr C_0(\OO)$, we have tacitly considered that \rf{JPAA} holds.
Applying to the right hand side of \rf{IA-O}    for $(\n \varphi,\pi_\varphi)$ estimates \rf{TSB-I}$_1$ and \rf{PT-I}$_1$  for $\tau\in(0,t_0)$, and estimates \rf{OPT-IV}-\rf{SII} for $\tau>t_0$,  by virtue of assumption \rf{OPT-II}, we get
$$ |(\Phi,\psi_0)| \displ\leq \dm \Phi\dm_{\ov q}\dm \psi(t)\dm_{\ov q'} +c \dm a\dm_\infty\dm \psi_0\dm_{\frac n2} \,,  $$
which,   letting $t\to\infty$ and  employing \rf{FPL}\,, implies \rf{OP-I}.

\end{proof}
\section{\label{US}The      Stokes Cauchy problem} Let us consider the Stokes Cauchy problem:
\be\label{STC}\ba{l}U_t-\Delta U+\n\pi_U=0\,,\quad\nabla\cdot U=0\,,\mbox{ in }(0,T)\times\R^n\,,\VS U=U_0\mbox{ on }\{0\}\times\R^n\,.\ea\ee
\begin{lemma}\label{LSTC}{\sl For all $U_0\in   J^s_0(\R^n)$ there exits, up to a function of $t$ for the pressure field, a unique solution $U$ to problem \rf{STC} such that $\n U\in C([0,T);J_0^p(\R^n))$ and,  for all $r\geq s\geq1$ and $t>\sigma\geq0$, set $\mu:=\frac n2\left(\frac1s-\frac1r\right)$, we get  \be\label{STC-I}\ba{l}\dm\n U(t )\dm_r\leq c\dm \n U(\sigma)\dm_s(t\!-\!\sigma )^{-\mu}\,,\mbox{ for all }t\!-\!\sigma>0,\VS\dm D^{ 2}\hskip-0.03cmU(t )\dm_r\!+\! \dm U_t(t )\dm_r\!\leq \!c\dm \n U(\sigma)\dm_s(t\!-\!\sigma)^{-\frac12-\mu}\!,\mbox{\,for\,all\,}t\!-\!\sigma\!>\!0, \ea\ee where the constant $c$ is independent of $U_0$\,. }\end{lemma}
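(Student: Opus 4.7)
\textbf{Proof proposal for Lemma \ref{LSTC}.} The plan is to exploit the absence of boundaries: on $\R^n$ the pressure gradient for the Stokes Cauchy problem drops out. Taking divergence of \rf{STC}$_1$ and using $\nabla\cdot U_0=0$ gives $\Delta\pi_U=0$ in $\R^n$, hence, in the distributional class in which we work, $\nabla\pi_U=0$. Consequently each component of $U$ solves the heat equation and
\[
U(t,x)=(G(t)*U_0)(x),\qquad G(t,x):=(4\pi t)^{-n/2}\exp(-|x|^2/4t).
\]
This representation, together with $\nabla\cdot U=0$ inherited from the initial datum, yields both existence and uniqueness (up to a function of $t$ in $\pi_U$) for $U_0\in J_0^s(\R^n)$; the continuity statement $\nabla U\in C([0,T);J_0^p(\R^n))$ is a standard property of the heat semigroup acting on $\nabla U_0$, first verified on $\mathscr{C}_0(\R^n)$ and then extended by density in the $\|\nabla\cdot\|_s$-seminorm.

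The main step is the estimate. First I would prove \rf{STC-I} for $\sigma=0$ and assume, by density, that $U_0\in\mathscr{C}_0(\R^n)$. Since differentiation commutes with convolution,
\[
\nabla U(t)=G(t)*\nabla U_0,\qquad D^2U(t)=\nabla G(t)*\nabla U_0,\qquad U_t(t)=\Delta U(t)=\nabla G(t)*\nabla U_0,
\]
the last equality because $\nabla\pi_U=0$. Young's convolution inequality with exponents satisfying $\tfrac1r=\tfrac1m+\tfrac1s-1$ then gives
\[
\|\nabla U(t)\|_r\leq \|G(t)\|_m\|\nabla U_0\|_s,\qquad \|D^2U(t)\|_r+\|U_t(t)\|_r\leq 2\|\nabla G(t)\|_m\|\nabla U_0\|_s.
\]
Direct computation on the Gaussian kernel yields the scaling
\[
\|G(t)\|_m=c\,t^{-\frac{n}{2}(1-\frac1m)}=c\,t^{-\mu},\qquad \|\nabla G(t)\|_m=c\,t^{-\frac{n}{2}(1-\frac1m)-\frac12}=c\,t^{-\mu-\frac12},
\]
which is exactly \rf{STC-I} with $\sigma=0$.

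To obtain \rf{STC-I} for arbitrary $\sigma\geq 0$, I would apply the semigroup property: $U(t)=G(t-\sigma)*U(\sigma)$, and repeat the previous convolution argument with $\nabla U(\sigma)$ in place of $\nabla U_0$, noting that the constant obtained does not depend on $\sigma$. Finally I would remove the smoothness assumption on $U_0$ by approximating $U_0\in J_0^s(\R^n)$ with a sequence $\{U_0^{(k)}\}\subset\mathscr{C}_0(\R^n)$ converging in the $\|\nabla\cdot\|_s$-seminorm; the estimates just derived pass to the limit by standard arguments. No serious obstacle is expected: the only mildly delicate point is justifying $\nabla\pi_U\equiv 0$ within the correct functional class (so that $U_t=\Delta U$), but this is a classical Liouville-type remark for harmonic pressures vanishing at infinity in $L^p$-sense.
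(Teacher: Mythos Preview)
Your argument is correct and follows exactly the route sketched in the paper: the heat-kernel representation $U(t)=G(t)*U_0$, the integration by parts that shifts one spatial derivative onto the data (so that $\nabla U=G*\nabla U_0$ and $D^2U,\,U_t$ are estimated through $\nabla G*\nabla U_0$), and Young's convolution inequality together with the explicit scaling of $\|G(t)\|_m$ and $\|\nabla G(t)\|_m$. Your write-up simply fills in the details of the paper's one-line proof.
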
\begin{proof} By means of the representation of the solution by heat  kernel, after integrating by parts, and via the  Young theorem we get \rf{STC-I}$_{1,2}$.\end{proof}In the following corollary we make the special assumption of $U_0\in \mathscr C_0(\OO)\subset\mathscr C_0(\R^n)$ (we mean that $U_0$ has a trivial extension on $\R^n$), and we study the behavior of the solutions corresponding to these special initial data in neighborhood of $t=0$ and of $t=\infty$. Of course, the special data influence the quoted behavior. They are special in such a way that they are useful for our subsequent  tasks.     \begin{coro}\label{CCP}{\sl Let $U_0\in \mathscr C_0(\OO)$. Then the solution of Lemma\,\ref{LSTC} is such that \be\label{CCP-I}\ba{l}\dm U(t)\dm_\infty\leq c\dm\n U_0\dm_pt^{\frac12-\frac n{2p}},\mbox{if }p\in(1,n)\,,\vspace{2pt}\\\dm U(t)\dm_{L^\infty(\OO\cap B_R)}\leq c\dm \n v_0\dm_p\zeta_d(t) \ea\ee\be\label{CCPI-I}\ba{l}   \dm U(t)\dm_{L^\infty(\po)}+\dm U(t)\cdot\nu\dm_{-\frac1q,q}\leq c\dm \n U_0\dm_p \zeta_b(t)\,,   \vspace{2pt} \\ \dm U(t)\dm_{L^p(\po)}\leq  c \dm \n U_0\dm_pt^\frac12\,,\vspace{2pt}\\ \dm U_t(t)\cdot\nu\dm_{-\frac1q,q}\leq c \dm \n U_0\dm_pt^{-\frac12-\mu}\,, \ea\ee with $\zeta_d:= \left\{\hskip-0.2cm\ba{ll}1+\intll1t\!\frac1\tau d\tau&\mbox{if }p=n\,,\\1+t^{\frac12-\frac n{2p}}&\mbox{if }p\ne n\,,\ea\right.$ and $\zeta_b(t):=\left\{\hskip-0.2cm\ba{ll}t^{\frac12-\frac n{2p}}&\mbox{if }p\ne n\,,\\ \log(t+e)&\mbox{if }p=n\,,  \ea\right.$ where $c$ is a constant independent of $U_0$ and all the estimates hold uniformly in $t>0$. There exists a constant $c$ such that for all $U_0\in \mathscr C_0(\R^n)$ \be\label{DSDT}\dm U_t(t)\dm_p+\dm D^2U(t)\dm_p\leq c\dm D^2U_0\dm_p\,,\mbox{ for all }t>0\,.\ee }\end{coro}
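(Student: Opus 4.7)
My plan is to derive the estimates by combining the semigroup bounds of Lemma \ref{LSTC} (which control $\nabla U$, $D^2U$, $U_t$) with Sobolev/Morrey embeddings in $\R^n$ and with the trace inequality \rf{tr} localized near $\po$. The special structure of $U_0\in \mathscr C_0(\OO)$, in particular the fact that its trivial extension to $\R^n$ gives a compactly supported test field vanishing on $\po$, is the key to obtaining the $t^\frac12$ type behavior on $\po$.

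First I would dispatch \rf{CCP-I}$_1$: since $U_0\in \mathscr C_0(\OO)\subset W^{1,p}(\R^n)$, Sobolev embedding yields $\dm U_0\dm_{p^*}\le c\dm \n U_0\dm_p$ for $p\in(1,n)$, and the classical $L^{p^*}$-$L^\infty$ heat-kernel bound on $\R^n$ (applicable since on the whole space the Stokes semigroup on divergence-free data coincides with the heat semigroup) supplies the factor $t^{-n/(2p^*)}=t^{\frac12-\frac n{2p}}$. For the local bound \rf{CCP-I}$_2$, I would split: for $p\in(1,n)$ the previous estimate already implies it (with the ``$1+$'' offset absorbing the behaviour at $t=0$ on the bounded set $\OO\cap B_R$, via $\dm U(t)\dm_\infty\le \dm U_0\dm_\infty$); for $p>n$ I would use Morrey on $\OO\cap B_R$ to get $\dm U(t)\dm_{L^\infty(\OO\cap B_R)}\le c(\dm \n U(t)\dm_p+\dm U(t)\dm_p)$, bounding the first term by $c\dm \n U_0\dm_p$ (Lemma \ref{LSTC}) and the second by writing $U(t)-U_0=\int_0^t U_\tau\,d\tau$ and invoking $\dm U_\tau(\tau)\dm_p\le c\tau^{-\frac12}\dm\n U_0\dm_p$ from Lemma \ref{LSTC}, which integrates to $ct^\frac12\dm\n U_0\dm_p$; for $p=n$ the logarithmic factor $\log(t+e)$ comes from an endpoint Gagliardo--Nirenberg interpolation between $L^q$ with $q>n$ and a $W^{1,n}$ norm.

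For the boundary estimates \rf{CCPI-I}$_{1,2,3}$ the unifying tool is \rf{tr} applied on a bounded collar $D$ of $\po$. The decisive remark for \rf{CCPI-I}$_2$ is that $U_0$ vanishes on $\po$, so $\dm U(t)\dm_{L^p(\po)}=\dm U(t)-U_0\dm_{L^p(\po)}$; by trace, this is controlled by the $W^{1,p}$-norm of $U(t)-U_0$ on $D$. The duality identity $(U(t)-U_0,\varphi)=-\int_0^t(\n U_0,\n e^{s\Delta}\varphi)\,ds$ combined with $\dm\n e^{s\Delta}\varphi\dm_{p'}\le cs^{-\frac12}\dm\varphi\dm_{p'}$ yields $\dm U(t)-U_0\dm_p\le ct^\frac12\dm\n U_0\dm_p$, which furnishes the announced $t^\frac12$ growth. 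The bounds \rf{CCPI-I}$_1$ follow in the same spirit from the local $L^\infty$ estimate of the previous step together with the trace inequality and, for the weak norm $\dm U(t)\cdot\nu\dm_{-\frac1q,q}$, duality with $W^{\frac1q,q'}(\po)$ after extending test functions harmonically into $\OO\cap B_R$; for \rf{CCPI-I}$_3$ I would repeat the same scheme but testing against $U_t$ and using $\dm U_t\dm_r\le c(t-\sigma)^{-\frac12-\mu}\dm \n U_0\dm_p$ of Lemma \ref{LSTC}.

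Finally, \rf{DSDT} is immediate because on $\R^n$ the Leray projector is the identity on $J^p(\R^n)$ so the Stokes system reduces to the heat equation; then $D^2U$ and $U_t=\Delta U$ themselves satisfy the heat equation with data $D^2U_0$ and $\Delta U_0$ respectively, and the $L^p$-contractivity of the heat semigroup gives the result. The main obstacle I expect is the $p=n$ endpoint in \rf{CCP-I}$_2$, where neither Sobolev embedding into $L^\infty$ nor Morrey is available and the $\log$ factor must be extracted carefully, and ensuring that the $t^\frac12$ behavior in \rf{CCPI-I}$_2$ holds uniformly for small $t$ rather than degenerating to $t^{1/(2p')}$, which is what forces the use of the duality representation of $U(t)-U_0$ rather than a naive interpolation from the trace inequality.
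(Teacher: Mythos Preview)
Your overall strategy is reasonable and in several places takes a genuinely different route from the paper. For \rf{CCP-I}$_1$ you use Sobolev embedding $W^{1,p}(\R^n)\hookrightarrow L^{p^*}$ followed by the $L^{p^*}$--$L^\infty$ heat bound, whereas the paper writes $\dm U(t)\dm_\infty\le\int_t^\infty\dm U_\tau\dm_\infty\,d\tau$ and feeds in \rf{STC-I}$_2$; both are valid. For \rf{DSDT} the two arguments coincide. The paper's unifying device throughout is the fundamental theorem of calculus in $t$: it writes $U(t)=U_0+\int_0^t U_\tau\,d\tau$ (or $U(t)=-\int_t^\infty U_\tau\,d\tau$) and then applies the bounds of Lemma\,\ref{LSTC} to $U_\tau$. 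This bypasses Sobolev/Morrey entirely in most places.

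There is, however, a real gap in your treatment of \rf{CCPI-I}$_2$, and you have in fact diagnosed it yourself without curing it. Your duality identity yields only $\dm U(t)-U_0\dm_{L^p(\R^n)}\le ct^{1/2}\dm\n U_0\dm_p$, i.e.\ a volume estimate; to pass to $L^p(\po)$ via \rf{tr} you still need $\dm\n(U(t)-U_0)\dm_p$, which is merely $O(\dm\n U_0\dm_p)$ with no small-$t$ factor, and the interpolation in \rf{tr}$_1$ then returns exactly the $t^{1/(2p')}$ you were trying to avoid. The paper sidesteps this by never tracing $U(t)-U_0$: since $U_0\equiv0$ on $\po$ one has $\dm U(t)\dm_{L^p(\po)}\le\int_0^t\dm U_\tau(\tau)\dm_{L^p(\po)}\,d\tau$, and the boundary norm of $U_\tau$ is estimated directly through \rf{STC-I}$_2$. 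The same device is what delivers $\zeta_b$ rather than $\zeta_d$ in \rf{CCPI-I}$_1$ when $p>n$ (your ``local $L^\infty$ estimate'' only gives $\zeta_d$, which does not vanish at $t=0$); and for $p=n$ the paper obtains the logarithm not by an endpoint interpolation but concretely: it bounds $\dm U(1)\dm_{L^\infty}$ via $W^{2,n}\hookrightarrow L^\infty$ and then integrates $\dm U_\tau\dm_\infty\le c\tau^{-1}\dm\n U_0\dm_n$ from $1$ to $t$, while on $\po$ it works in the complement $\OO^c$ (where $U_0\equiv0$) to control $\dm U(1)\dm_{L^\infty(\po)}$. If you want to keep your duality/embedding viewpoint, the missing ingredient is to obtain a $t^{1/2}$ bound for $\dm\n(U(t)-U_0)\dm_p$ as well, which you have not supplied; the time-integration of $U_\tau$ is the cleaner fix.
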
\begin{proof} In the case of $p\in(1,n)$ estimate \rf{CCP-I}$_1$ is an immediate consequence of \rf{STC-I}$_2$ and of the fact that \be\label{LI-I}\dm U(t)\dm_\infty\leq c\intll t\infty\dm U_\tau(\tau)\dm_\infty d\tau\,\;\mbox{ for all }t>0.\ee Analogously for $p>n$, employing again \rf{STC-I}$_2$, we get
$$\ba{ll}\dm U(t)\dm_{L^\infty(\OO\cap B_R)}\hskip-0.2cm&\displ\leq \dm U_0\dm_{L^\infty(\OO\cap B_R)}+\intll0t \dm U_\tau(\tau)\dm_\infty d\tau\\&\leq (c(R) +t^{\frac12-\frac n{2p}})\dm \n U_0\dm_p\,,\ea$$ where in the last step we estimate the $\dm U_0\dm_{L^\infty(\OO\cap B_R)}$ by $\dm U_0\dm_{W^{1,p}(\OO\cap B_R)}$, and,   taking into account that $U_0=0$ on $\R^n-\OO$\,, by applying the Poincar\é inequality to $\dm U_0\dm_{L^p(\OO\cap B_R)}$\,. The case of $p=n$ is a bit different. Initially we estimate $\dm U(1)\dm_{L^\infty(\OO\cap B_R)}$. By Sobolev inequality, for some $p>n$, $$\dm U(t)\dm_\infty\leq c \dm U(t)\dm_{W^{2,n}(\OO\cap B_R)}.$$ Employing \rf{STC-I}$_{1,2}$, we get
$$\dm \n U(t)\dm_n+\dm D^2 U(t)\dm_n\leq c(1+t^{-\frac12} )\dm \n U_0\dm_n\,,$$ and $$\dm U(t)\dm_{L^n(\OO\cap B_R)}\leq c\dm U_0\dm_{L^n(\OO\cap B_R)}+\intll0t\dm U_\tau(\tau)\dm_nd\tau\leq c\dm \n U_0\dm_n(1+t^\frac12)\,,$$ where in the last step we employ the Poincar\é inequality again. Therefore we can claim that $\dm U(1)\dm_{L^\infty(\OO\cap B_R)}\leq c\dm \n U_0\dm_n$. Finally, in order to obtain \rf{CCP-I}$_2$ in complete form, it is enough  to consider a path with end point 1 for $t<1$ and initial point $1$ for $t>1$, in any case we get $$\dm U(t)\dm_{L^{\infty}(\OO\cap B_R)}\leq \dm U(1)\dm_\infty+ \intll1t\dm U_\tau(\tau)\dm_\infty d\tau\leq \dm \n U_0\dm_n(1+\intll 1t\log\tau d\tau)\,.$$  In order to prove \rf{CCPI-I}$_1$, we remark that for $s<n$   and $r=\infty$ estimate \rf{LI-I} holds,  hence we have  the thesis. In the case of $s>n$, we remark that   $U=0$ on $\{0\}\times\po$, hence we can compute in the following way:
$$\dm U(t)\dm_{L^\infty(\po)}\leq \intll0t\dm U_\tau(\tau)\dm_\infty d\tau \,,$$ that, via \rf{STC-I}$_2$,  implies the thesis. Finally, we consider the case of $s=n$. We repeat the same arguments of the previous case but working in $\OO^c:=\R^n-\ov\OO$. First of all we note that for all $t>0$ it holds $U(t,x)\in C(\OO^c)$, and, by virtue of \rf{GN}, we get $$\dm U(t)\dm_{C(\OO^c)}\leq c\dm D^2U(t)\dm_{L^n(\R^n-\ov\OO)}^\frac12\dm U(t)\dm_{L^n(\R^n-\ov\OO)}^\frac12+c_0\dm U(t)\dm_{L^n(\R^n-\ov\OO)}^{\null}\,,$$ with $c$ independent of $t$ and $U_0$. Moreover, since $\dm U_0\dm_{L^n(\R^n-\OO)}=0$, employing \rf{STC-I}$_{2}$, we have $$\dm U(t)\dm_{L^n(\R^n-\ov{\OO})}\leq \intll 0t\frac d{d\tau}\dm U(\tau)\dm_{L^n(\R^n-\ov\OO)}d\tau\leq c\dm \n U_0\dm_nt^\frac12\,,\;t>0.$$  Therefore, we obtain \be\label{BIU}\dm U(t)\dm_{L^{\infty}(\po)}\leq \dm U(t)\dm_{C(\OO^c)}\leq c\dm \n U_0\dm_n+c_0c\dm \n U_0\dm_nt^\frac12\,,\;t>0\,.\ee Considering the following inequality:$$\dm U(t)\dm_{L^\infty(\po)}\leq \dm U(1)\dm_{L^\infty(\po)}+\intll1t\dm U_\tau(\tau)\dm_{L^\infty(\po)}d\tau\,, $$ via \rf{STC-I}$_2$ for the integral term, and via   estimate  \rf{BIU} for $\dm U(1)\dm_{L^{\infty}(\po)}$, we complete the proof. In order to prove \rf{CCPI-I}$_2$ we consider the following formula:
$$\dm U(t)\dm_{L^p(\po)}\leq \intll0t\dm U_\tau(\tau)\dm_{L^p(\po)}d\tau\,,$$ that by means of \rf{STC-I}$_2$ gives $$\dm U(t)\dm_{L^p(\po)}\leq c\dm \n U_0\dm_pt^\frac12\,.$$ In order to complete the estimates of the lemma we have to prove the one related to $W^{-\frac1q,q}(\po)$ norm. To this end, it is enough to observe that, by the regularity of $U(t)$ and $\po$ bounded, we get   $\dm U(t)\cdot \nu\dm_{-\frac1q,q}\leq c\dm U(t)\dm_\infty$. The same holds in the case of $U_t$. Hence the estimates are a consequence of \rf{CCP-I}$_1$ and \rf{CCPI-I}$_1$, and of \rf{STC-I}$_2$, respectively. Finally, estimate \rf{DSDT} is a consequence of the regularity of $U_0$ and the representation formula.  \end{proof}\section{\label{uS}A special auxiliary Stokes initial boundary value problem}
Let $v_0\in \mathscr C_0(\OO)$. Denoted by $(v,\pi_v)$ and by $(U, c)$ the solutions to problems \rf{STI} and \rf{STC} both with initial data $v_0$, whose existence are ensured by Lemma\,\ref{LI} and by Lemma\,\ref{LSTC}, respectively. The pair $(u,\pi_u)$ with $u:=v-U$ and $\pi_u:=\pi_v-c(t)$ is a solution to the problem
\be\label{STP}\ba{ll}u_t-\Delta u=-\nabla
\pi_u,\;\nabla\cdot u=0,&\mbox{ in
}(0,T)\times\OO,\vspace{3pt}\\u=-U&\mbox{
on
}(0,T)\times\po,\vspace{3pt}\\
u =0&\mbox{ on }\{0\}\times\OO. \ea\ee Trivially, we get   
\be\label{ZF}\intl{\po}u\cdot\nu d\sigma=0\quad\mbox{ and }\quad \intl{\po}u_t\cdot \nu d\sigma=0\,\quad \mbox{ for all }t>0\,.\ee
\begin{lemma}\label{DuL}{\sl Let $p\in(1,\infty)$,  and $q\geq p$. Set $\mu:=\frac n2\left(\frac1p-\frac1q\right)$, for solution $u$  to problem \rf{STP} the following estimates hold:
\be\label{DuL-I}\ba{lll}&\dm \n u(t)\dm_q \leq ct^{-\mu}\dm \nabla v_0\dm_p\,,\;&\mbox{ for   }t\in(0,1)\,,\VS &\dm u_t(t)\dm_q\leq ct^{-\frac12-\mu}\dm \n  v_0\dm_p \,,\;&\mbox{ for   }t\in(0,1)\,,\VS &\dm u(t)\dm_q\leq c t^{\frac12-\mu}\dm \n v_0\dm_p\,,\;&\mbox{ for  }t\in(0,1)\,,\ea\ee with $c$ independent of $v_0$\,.}\end{lemma}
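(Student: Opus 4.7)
The plan rests on the Green identity \rf{GIF} applied to \rf{STP}. Setting $s=0$ annihilates $(u(0),\widehat\varphi(t))$, and since the adjoint $\widehat\varphi$ defined by \rf{AGPR} vanishes on $\po$ the term $(\widehat\varphi,\nu\cdot T(u,\pi_u))_{\po}$ also drops; using $u=-U$ on $\po$ we are left with
\begin{equation*}
(u(t),\varphi_0)=-\int_0^t\bigl(U,\nu\cdot T(\widehat\varphi,\pi_{\widehat\varphi})\bigr)_{\po}\,d\tau
\end{equation*}
for arbitrary $\varphi_0\in\mathscr C_0(\OO)$, where $\varphi(\sigma)=\widehat\varphi(t-\sigma)$ is the forward Stokes solution of Lemma \ref{LI} with initial datum $\varphi_0$. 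To prove \rf{DuL-I}$_3$ I would estimate by H\"older on $\po$: the trace bound \rf{CCPI-I}$_2$ gives $\dm U(\tau)\dm_{L^p(\po)}\leq c\tau^{1/2}\dm\nabla v_0\dm_p$, while Corollary \ref{StTs} with $s=q'$, $r=p'$ (admissible since $q\geq p$ and producing the very same $\mu$) yields $\dm T(\widehat\varphi,\pi_{\widehat\varphi})(\tau)\dm_{L^{p'}(\po)}\leq c\dm\varphi_0\dm_{q'}(t-\tau)^{-\rho_0-\mu}$. Provided $\rho_0+\mu<1$, the resulting Beta-function integral is $ct^{3/2-\rho_0-\mu}\leq ct^{1/2-\mu}$ on $t\in(0,1)$ because $\rho_0<1$ by Lemma \ref{PT}, and duality produces \rf{DuL-I}$_3$ in this regime.

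To reach all $q\geq p$, I would bootstrap along a finite Sobolev chain $p=q_0<q_1<\cdots<q_k=q$ whose gaps are chosen so that each increment of $\mu$ satisfies the integrability condition above. At step $i+1$, apply the Green identity at $s=t/2$ in place of $s=0$: this yields
\begin{equation*}
(u(t),\varphi_0)=(u(t/2),\widehat\varphi(t/2))-\int_{t/2}^t\bigl(U,\nu\cdot T(\widehat\varphi,\pi_{\widehat\varphi})\bigr)_{\po}\,d\tau.
\end{equation*}
The first term is controlled by the previous step's $L^{q_i}$-estimate on $u(t/2)$ combined with the $L^{q'}\to L^{q_i'}$ regularisation of $\widehat\varphi$ from \rf{JPA}$_1$, while the integral is handled exactly as above, now with the time span $t-\tau\leq t/2$ keeping it convergent. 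For \rf{DuL-I}$_2$ I would proceed analogously: once \rf{DuL-I}$_3$ is in hand, exploit the Stokes semigroup property on $[t/2,t]$, invoking Lemma \ref{LI} to gain the extra factor $(t/2)^{-1}$ for the interior datum $u(t/2)$, and control the boundary contribution by a Green-identity estimate on $[t/2,t]$ together with $\dm U_t(\tau)\cdot\nu\dm_{-1/q,q}\leq c\tau^{-1/2-\mu}\dm\nabla v_0\dm_p$ supplied by \rf{CCPI-I}$_3$.

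Finally, \rf{DuL-I}$_1$ follows by interpolation. Lemma \ref{AES} applied with $P\Delta u=u_t$ gives $\dm D^2u(t)\dm_q\leq c(\dm u_t(t)\dm_q+\dm u(t)\dm_{W^{1,q}(\OO')})\leq ct^{-1/2-\mu}\dm\nabla v_0\dm_p$, and the Gagliardo-Nirenberg inequality of Lemma \ref{GN} then interpolates
\begin{equation*}
\dm\nabla u(t)\dm_q\leq c\dm D^2 u(t)\dm_q^{1/2}\dm u(t)\dm_q^{1/2}+c_0\dm u(t)\dm_q\leq ct^{-\mu}\dm\nabla v_0\dm_p.
\end{equation*}
The main obstacle I anticipate is matching the non-optimal trace exponent $\rho_0$ of Lemma \ref{PT} against the scaling exponent $\mu$: the smallness requirement $\rho_0+\Delta\mu<1$ at each step forces the chain to be sufficiently fine, and one has to verify that the interior $W^{1,q}(\OO')$-control needed by Lemma \ref{AES} propagates through the iteration without loss.
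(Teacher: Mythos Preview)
Your bootstrap for \rf{DuL-I}$_3$ does not work as written. At step $i{+}1$ you keep estimating $U$ in $L^p(\po)$ via \rf{CCPI-I}$_2$ and the stress tensor in $L^{p'}(\po)$; by Corollary \ref{StTs} this produces the integrand $\tau^{1/2}(t-\tau)^{-\rho_0-\mu_{i+1}}$ with the \emph{full} exponent $\mu_{i+1}=\frac n2(\frac1p-\frac1{q_{i+1}})$, not the increment $\mu_{i+1}-\mu_i$. Restricting the integration to $[t/2,t]$ does nothing for the singularity at $\tau=t$, so the convergence condition is still $\rho_0+\mu_{i+1}<1$ and you have gained nothing over the one-step argument. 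To make the chain work you would have to estimate $U$ on $\po$ in a higher Lebesgue norm at each step, which requires \rf{CCPI-I}$_1$ and an interpolation; this is not what you wrote. A second gap is your use of Lemma \ref{AES}: that lemma requires zero trace on $\po$, which $u$ does not have (its trace is $-U$). Finally, ``duality'' from $|(u(t),\varphi_0)|\le M\dm\varphi_0\dm_{q'}$ for $\varphi_0\in\mathscr C_0(\OO)$ does not yield $\dm u(t)\dm_q\le M$ directly, since divergence-free test functions do not span $L^{q'}$; one needs Lemma \ref{CLM} together with the trace bound on $u\cdot\nu=-U\cdot\nu$.

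The paper avoids all of this by proving \rf{DuL-I}$_2$ \emph{first}. Writing the Green identity \rf{GIF} for the pair $(u_t,\pi_{u_t})$ and integrating by parts repeatedly --- using $\widehat\varphi_\tau+\Delta\widehat\varphi=\nabla\pi_{\widehat\varphi}$, $U_\tau=\Delta U$, and the vanishing of $u(s)$, $U(s)|_{\po}$ as $s\to0$ --- one arrives at the identity
\[
(u_t(t),\varphi_0)=-(U_t(t),\varphi_0)-(\nabla v_0,\nabla\varphi(t)),
\]
which contains no boundary integral at all. The right-hand side is bounded by $c\,t^{-\frac12-\mu}\dm\nabla v_0\dm_p\dm\varphi_0\dm_{q'}$ for \emph{every} $q\ge p$ via \rf{STC-I}$_2$ and \rf{JPA}$_2$ (recall $\mu_1=\frac12+\mu$ for $t<1$), with no restriction of the type $\rho_0+\mu<1$; Lemma \ref{CLM} and \rf{CCPI-I}$_3$ then give $\dm u_t(t)\dm_q$. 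From this, $\dm u(t)\dm_p\le\int_0^t\dm u_\tau\dm_p\,d\tau\le ct^{1/2}\dm\nabla v_0\dm_p$; Lemma \ref{AES} is applied to $v=u+U$ (which \emph{does} vanish on $\po$) to get $\dm D^2v\dm_q$ and hence $\dm D^2u\dm_q$; and the Gagliardo--Nirenberg inequality of Lemma \ref{ICM} (the exterior-domain version, with $c_0=0$) interpolates $\dm\nabla u\dm_q$ between $\dm D^2u\dm_q$ and $\dm u\dm_p$, then $\dm u\dm_q$ between $\dm\nabla u\dm_q$ and $\dm u\dm_p$.
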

\begin{proof}We set $\widehat \varphi(\tau,x):=\varphi(t-\tau,x)$ for all $\tau\in[0,t]$, where $t>0$ is fixed, and $\varphi_0\in\mathscr C_0(\OO)$.  Taking into account   problem \rf{STP}, in the case of the pair $(u_t,\pi_{u_t})$ and $(\widehat\varphi,\pi_{\widehat\varphi})$  the Green identity \rf{GIF} becomes:
\be\label{DuL-II}(u_t(t),\varphi_0)  =(u_t(s),\varphi(t-s))+\intll st(U_\tau,\nu\cdot T(\widehat\varphi,\pi_{\widehat\varphi}))d\tau\,.\ee  Applying the divergence theorem, recalling that $\widehat \varphi$ is solution to the adjoint problem on $(0,t)\times\OO$,  we get $$(u_t(t),\varphi_0)  =(u_t(s),\varphi(t-s))+\intll st(\n U_\tau,\n\widehat\varphi)d\tau-\intll st(U_\tau,\widehat\varphi_\tau)d\tau\,.$$
A further integration by parts furnishes $$ (u_t(t),\varphi_0)  =(u_t(s),\varphi(t-s))-\intll st(\Delta U_\tau,\widehat\varphi)d\tau-\intll st(U_\tau,\widehat\varphi_\tau)d\tau\,.$$ Hence via \rf{STC}, integrating by parts with respect to the time, we get
\be\label{DTDS}(u_t(t),\varphi_0)=(u_s(s),\varphi(t-s))-(U_t(t),\varphi_0)+(U_s(s),\varphi(t-s))\,.\ee Since $u_s(s)=P\Delta u(s)$, an integration by parts furnishes $$(u_s(s),\varphi(t-s))=(U(s),\nu\cdot \nabla \varphi(t-s)))_{\po}+(u(s),\Delta \varphi(t-s))\,.$$ Letting $s\to0$, we have $u(s,x)\to 0$ in $L^p(\OO)$, as well, recalling \rf{CCPI-I}$_2$, we have $U(s)\to0$ in $L^p(\po)$, then,  we get \be\label{DTDS-I}\lim_{s\to0}(u_s(s),\varphi(t-s))=0\,.\ee
Therefore from \rf{DTDS} and $U_s(s)=\Delta U(s)$, an integration by parts    allows us to deduce
 $$\ba{ll}(u_t(t),\varphi_0)\hskip-0.2cm 
&=(u_s(s),\varphi(t-s))-(U_t(t),\varphi_0)+(U_s(s),\varphi(t-s))\VSE=(  u_s(s), \varphi(t-s))-(U_t(t),\varphi_0)-(\n U(s),\n\varphi(t-s)) \,,\ea$$ and
letting $s\to0$,   we get
\be\label{ut}(u_t(t),\varphi_0)=-(U_t(t),\varphi_0)-(\n v_0,\n \varphi(t)),\mbox{ for   } t>0\,.\ee
Applying Holder's inequality, via estimates \rf{JPA}$_2$ for $\varphi$ and \rf{STC-I}$_2$ for $U$, we obtain
\be\label{RI}\ba{ll}|(u_t(t),\varphi_0)|\hskip-0.2cm&\leq \dm U_t(t)\dm_q\dm \varphi_0\dm_{q'}+\dm  \n v_0\dm_p\dm\n \varphi(t)\dm_{p'}\VSE\leq c\dm \n v_0\dm_pt^{-\frac12-\mu}\dm \varphi_0\dm_{q'}\,,\mbox{ for   } t\in(0,1)\,.\ea\ee Recalling \rf{ZF}, by means of estimate \rf{CLM-I} we also obtain
$$\dm u_t(t)\dm_q\leq c(\dm \n v_0\dm_pt^{-\frac12-\mu}+\dm u_t\cdot\nu\dm_{-\frac1q,q})\leq c(\dm \n v_0\dm_pt^{-\frac12-\mu}+\dm U_t\cdot\nu\dm_\infty)$$ which implies \rf{DuL-I}$_2$ after applying \rf{CCPI-I}$_3$ for $U_t$\,.
As a consequence we also prove that 
\be\label{SLM-I}\dm u(t)\dm_p\leq \intll0t\dm u_\tau(\tau)\dm_pd\tau\leq ct^{\frac12}\dm \n v_0\dm_p\,, \mbox{ for   } t\in(0,1)\,,\ee which proves \rf{DuL-I}$_3$ for $q=p$. Since for all $t>0$, $\dm v(t)\dm_q\leq c\dm v_0\dm_pt^{-\mu}$, via \rf{CCP-I}, we obtain that for all $t>0$ and $R>0$ the estimate $\dm u(t)\dm_{L^q(\OO\cap B_R)}<\infty$ holds. Fixing $t$ in \rf{STI}, by Lemma\,\ref{AES} we get
$$\dm D^2v\dm_q\leq c(\dm v_t\dm_q+\dm v\dm_{L^q(\OO\cap B_R)}),\mbox{ for   } t\in(0,1)\,.$$ Hence the following holds   $$ \dm D^2u\dm_q \leq c(\dm u_t\dm_q+\dm U_t\dm_q+\dm u\dm_{L^q(\OO\cap B_R)}+\dm U\dm_{L^q(\OO\cap B_R)})  ,\mbox{ for   } t\in(0,1)\,.$$ Since \rf{GN-I} for all $\delta\in(0,1)$ furnishes $\dm u\dm_q\leq \delta\dm D^2u\dm_q+c(\delta)\dm u\dm_p$, by virtue of estimates \rf{STC-I}$_{2}$ and \rf{SLM-I}\,, and by virtue of estimate \rf{DuL-I}$_2$\,, for a suitable $\delta$, we get
\be\label{DuL-IIV}\ba{ll}\dm D^2u\dm_q\hskip-0.2cm&\leq  c(\dm u_t\dm_q+\dm U_t\dm_q+\dm u\dm_{L^p(\OO\cap B_R)}+\dm U\dm_{L^q(\OO\cap B_R)})\VS  &\leq ct^{-\frac12-\mu}\dm \n v_0\dm_p\,,\mbox{ for }t\in(0,1)\,,\ea\ee where the constant $c$ is independent of $v_0$ and $t$\,.
 Employing estimate \rf{CM} of Lemma\,\ref{ICM} we deduce $$\dm \n u(t)\dm_q\leq c\dm D^2u(t)\dm_q^a\dm u(t)\dm_p^{1-a}\,\mbox{ for }t\in(0,1)\,.$$ Hence    estimate \rf{DuL-I}$_1$ follows by means of   \rf{SLM-I}-\rf{DuL-IIV}. Employing again estimate \rf{CM},   for all $q>p$ we deduce that $\dm u(t)\dm_q\leq c\dm \n u(t)\dm_q^a\dm u(t)\dm_p^{1-a}$, which  completes the proof via \rf{DuL-I}$_1$ and \rf{SLM-I}.
\end{proof} \begin{lemma}\label{utL}{\sl Let $\OO$ be an exterior exterior and $p\in(1,\infty)$. For $q\geq p$ we set  $\mu=\frac n2\left(\frac1p-\frac1q\right)$. Then for the time derivate of solution $u$ to problem \rf{STP} the following estimate holds:
\be\label{ut-I}  \dm u_t(t)\dm_q\leq ct^{-\frac12-\mu}\dm \n v_0\dm_p \,, \mbox{for }t>1\,,   \ee where  $c$ is a constant independent of $v_0$.  }\end{lemma}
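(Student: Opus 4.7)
The plan is to extend the argument of Lemma~\ref{DuL} to long times $t>1$ by leveraging the identity
$$(u_t(t),\varphi_0)=-(U_t(t),\varphi_0)-(\nabla v_0,\nabla\varphi(t))$$
derived in the course of that proof. This identity was obtained for every $t>0$, so it remains available here; as before, $\varphi$ denotes the Stokes solution with initial datum $\varphi_0\in\mathscr C_0(\OO)$, and the $L^q$-norm of $u_t(t)$ will be recovered from its duality pairings by means of Lemma~\ref{CLM}, exploiting the zero flux identity \rf{ZF} together with the bound \rf{CCPI-I}$_3$ on the normal trace $u_t\cdot\nu=-U_t\cdot\nu$ on $\po$.

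I would first handle the regime $p\geq n/(n-1)$, so that $p'\leq n$. Testing the identity against $\varphi_0$ with $\dm\varphi_0\dm_{q'}\leq 1$, H\"older's inequality combined with \rf{STC-I}$_2$ for $U_t(t)$ and with \rf{JPA}$_2$ for $\nabla\varphi(t)$ in $L^{p'}$ yields
$$|(u_t(t),\varphi_0)|\leq c\dm \nabla v_0\dm_p\bigl(t^{-\frac12-\mu}+\dm \nabla\varphi(t)\dm_{p'}\bigr)\leq c\dm\nabla v_0\dm_p t^{-\frac12-\mu},\;t>1,$$
since the condition $p'\leq n$ places us in the favourable branch of \rf{JPA}$_2$. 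Lemma~\ref{CLM}, in conjunction with \rf{CCPI-I}$_3$, then delivers \rf{ut-I} in this range.

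In the complementary regime $p<n/(n-1)$, so that $p'>n$, the \rf{JPA}$_2$ decay of $\dm\nabla\varphi(t)\dm_{p'}$ is only $t^{-n/(2q')}$, which is strictly slower than $t^{-\frac12-\mu}$; the second term must therefore be recast. Since for $p<n/(n-1)<n$ we have $J_0^p(\OO)\hookrightarrow L^{p^*}(\OO)$ with $p^*=np/(n-p)$ and vanishing boundary trace, and since $\nabla\cdot v_0=0$, an integration by parts eliminates both the boundary contribution and the pressure of $\varphi$, giving
$$(\nabla v_0,\nabla\varphi(t))=-(v_0,\varphi_t(t)).$$
H\"older's inequality, Sobolev's embedding $\dm v_0\dm_{p^*}\leq c\dm\nabla v_0\dm_p$, and \rf{JPA}$_3$ applied to $\varphi_t$ in $L^{(p^*)'}$ combine to reproduce exactly the exponent $-\frac12-\mu$ (a direct computation shows $1+\frac{n}{2}(\frac{1}{p^*}-\frac{1}{q})=\frac12+\mu$), at least when $q\geq p^*$; the intermediate range $p\leq q<p^*$ is then recovered by an interpolation of \rf{JPA}$_3$ between $L^{q'}$ and $L^{(p^*)'}$.

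The main obstacle is precisely the case $p<n/(n-1)$: the direct H\"older--\rf{JPA}$_2$ bound on $\nabla\varphi$ is too weak to close the duality argument, and one is forced to convert the second term into $(v_0,\varphi_t)$ via integration by parts and handle it by Sobolev embedding combined with the sharper time decay of $\varphi_t$ supplied by \rf{JPA}$_3$. Once both regimes are in place, the passage from the duality pairings to $\dm u_t(t)\dm_q$ through Lemma~\ref{CLM} is automatic, the normal-trace datum being already controlled by \rf{CCPI-I}$_3$.
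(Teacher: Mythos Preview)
Your treatment of the regime $p\geq n/(n-1)$ is correct and essentially coincides with the paper's; the only cosmetic difference is that the paper first reduces to $q=p$ via the semigroup property of $v_t$, whereas you work directly with general $q$.

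The gap is in the regime $p<n/(n-1)$. Your integration-by-parts identity $(\nabla v_0,\nabla\varphi(t))=-(v_0,\varphi_t(t))$ is valid, and for $q\geq p^*$ the combination of Sobolev's embedding with \rf{JPA}$_3$ does produce the right exponent --- this part is in fact a cleaner alternative to the paper's computation in that sub-range. But the closing sentence, ``the intermediate range $p\leq q<p^*$ is then recovered by an interpolation of \rf{JPA}$_3$ between $L^{q'}$ and $L^{(p^*)'}$'', does not hold up. When $q<p^*$ one has $(p^*)'<q'$, and the $L^s$--$L^r$ smoothing in \rf{JPA}$_3$ only \emph{raises} the Lebesgue exponent; on an exterior domain there is no embedding $L^{q'}\hookrightarrow L^{(p^*)'}$ to compensate. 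Nor can you interpolate the already-proved $q\geq p^*$ estimate against anything at the bottom of the scale, because you possess no bound on $\dm u_t(t)\dm_r$ for any $r<p^*$. In particular the base case $q=p$ itself (which lies strictly below $p^*$) is not covered by your argument.

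The paper closes this gap by a genuinely different mechanism: after reducing to $q=p$, for $p<n/(n-1)$ it abandons the identity you use and returns to the boundary-integral form \rf{utt} of the Green identity,
\[
(u_t(t),\varphi_0)=\intll0t(U_\tau,\nu\cdot T(\widehat\varphi,\pi_{\widehat\varphi}))_{\po}\,d\tau,
\]
splits the time interval at $t/2$, integrates by parts in $\tau$ on $[0,t/2]$, and then invokes the trace bounds \rf{StTs-I}, \rf{StTs-ID} on the adjoint stress tensor together with \rf{CCP-I}$_1$ and \rf{CCPI-I}$_2$ for $U$. Your Sobolev shortcut does not replace this step for small $q$.
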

\begin{proof} By virtue of the semigroup property \rf{JPA}$_1$ for $v_t$, that is $\dm v_t(t)\dm_q\leq c(t-s)^{-\mu}\dm v_s(s)\dm_p$, and by virtue of \rf{STC-I}$_2$ for $U_t$, since $v=u+U$ we can limit ourselves to consider the proof for $q=p$, that is $\mu=0$\,\footnote{\;Actually we have$$\ba{ll}\dm u_t(t)\dm_q\hskip-0.2cm&\leq \dm v_t(t)\dm_q+\dm U_t(t)\dm_q\leq ct^{-\mu}\dm v_t(\frac  t2)\dm_p+ct^{-\frac12-\mu}\dm \n v_0\dm_p\\& \leq ct^{-\mu}\Big[\dm u_t(\frac t2)\dm_p\!+\dm U_t(\frac t2)\dm_p\Big]\!+ ct^{-\frac12-\mu}\dm \n v_0\dm_p\leq ct^{-\mu}\dm u_t(\frac t2)\dm_p\!+ ct^{-\frac12-\mu}\dm \n v_0\dm_p\,,\ea$$ where we have employed \rf{STC-I}$_2$ for $U_t$ and \rf{JPA}$_1$ for $v_t$.} . We distinguish the cases: $p\in(1,
\frac n{n-1})$ and $p \geq\frac n{n-1} $. In the latter case we have    $ p'\in(1,n]$. We can deduce estimate  \rf{RI} again, hence  we get
 $$\ba{ll}|(u_t(t),\varphi_0)|\hskip-0.2cm&\leq \dm U_t(t)\dm_p\dm \varphi_0\dm_{p'}+\dm \n v_0\dm_p\dm \n\varphi(t)\dm_{p'}\VSE\leq  c\dm \n v_0\dm_p \dm \varphi_0\dm_{p'}t^{-\frac12} \,,\mbox{ for   } t>1\,.\ea$$ Recalling \rf{ZF}, applying Lemma\,\ref{CLM} we arrive at
\be\label{ut-V}\dm u_t(t)\dm_p\leq c(\dm \n v_0\dm_pt^{-\frac12}+\dm U_t\cdot\nu\dm_{-\frac1p,p})\leq c(\dm \n v_0\dm_pt^{-\frac12}+\dm U_t\cdot\nu\dm_\infty)\ee which,   after applying \rf{CCPI-I}$_3$ for $U_t$\,, implies \rf{ut-I}$_2$.             
Now we consider $p\in(1,\frac n{n-1})$. Hence we have $p'\in(n,\infty)$.  Since  integrating by parts we get $$(u_t(s),\varphi(t-s))=(\Delta u(s),\varphi(t-s))=(U(s),\nu\cdot \n \varphi(t-s))_{\po}+(u(s),\Delta\varphi(t-s))\,,$$      recalling that for $s\to0$ we get both $u(s,x)=v(x,s)-U(x,s)\to 0$ in $J^p(\OO)$ and, by virtue of \rf{CCPI-I}$_2$, $U(s)\to0$ in $L^p(\po)$, via the Green Identity \rf{DuL-II}, we deduce \be\label{utt}(u_t(t),\varphi_0)=\intll 0t(U_\tau,\nu\cdot T(\widehat\varphi,\pi_{\widehat\varphi}))_{\po}d\tau=I_1+I_2 ,\mbox{ for   } t>1\,,\ee where we have set $$I_1:=\intll0{\frac t2}(U_\tau,\nu\cdot T(\widehat\varphi,\pi_{\widehat\varphi}))_{\po}d\tau \mbox{ \;and \;}I_2:=\intll {\frac t2}t(U_\tau,\nu\cdot T(\widehat\varphi,\pi_{\widehat\varphi}))_{\po}d\tau\,.$$ 
Integrating by parts, recalling that, letting $s\to0$, \rf{CCPI-I}$_2$ gives $U(s,x)\to0$ in $L^p(\po)$, we obtain $$I_1=(U(\mbox{$\frac t2$}),\nu\cdot T(\varphi,\pi_\varphi)(\mbox{$\frac t2$}))_\po-\intll0{\frac t2}(U,\nu\cdot T(\varphi_\tau,\pi_{\varphi_\tau})_{\po}d\tau:=I_{11}+I_{12}\,.$$
Applying H\"older's inequality, employing \rf{CCP-I}$_1$ for $U$ and \rf{StTs-I}$_3$ for the stress tensor, we get
$$\ba{ll}|I_{11}|\hskip-0.2cm&\leq c\dm U(\mbox{$\frac t2$})\dm_\infty\dm T(\varphi,\pi_{\varphi})(\mbox{$\frac t2$})\dm_{L^{p'}(\po)}\leq c\dm \n v_0\dm_p\dm \varphi_0\dm_{p'}t^{\frac12-\frac n2}\VSE\leq c\dm \n v_0\dm_p\dm \varphi_0\dm_{p'}t^{-\frac12} \,,\mbox{ for }t>1\,.\ea$$ Applying H\"older's inequality, we get
$$|I_{12}|\leq \intll0{\frac12}\dm U\dm_{L^p(\po)}\dm T(\widehat\varphi_\tau,\pi_{\widehat\varphi_{\tau}})\dm_{L^{p'}(\po)}d\tau +\intll{\frac12}{\frac t2}\dm U\dm_\infty\dm T(\widehat\varphi_\tau,\pi_{\widehat\varphi_{\tau}})\dm_{L^{p'}(\po)}d\tau \,.$$ Employing estimates \rf{CCPI-I}$_2$ and \rf{CCP-I}$_1$  for $U$ in  the first and for the second integral,  respectively, and \rf{StTs-ID}$_3$ for the stress tensor, we get
$$\ba{ll}|I_{12}|\hskip-0.1cm&\displ\leq c\dm \n v_0\dm_p\dm \varphi_0\dm_{p'}\Big[t^{-1-\frac n{2p'\hskip-0.1cm\null}}+\intll{\frac12}{\frac t2}\tau^{\frac12-\frac n{ 2p}}(t-\tau)^{-1-\frac n{2p'\hskip-0.1cm\null}}\hskip0.1cmd\tau\Big]\VSE\leq c\dm \n v_0\dm_p\dm \varphi_0\dm_{p'}t^{-\frac12}\,,\mbox{ for }t>1\,,\ea$$ where we have taken into account that $t>1$.
  Moreover  for $I_2$ we obtain 
$$\ba{ll}|I_2|  \hskip-0.2cm&\leq\displ c\intll{\frac t2}t\dm U_\tau\dm_\infty\dm T(\widehat\varphi, \pi_{\widehat\varphi})\dm_{  p'}d\tau \\&\displ\leq\! c\dm \n v_0\dm_p\dm \varphi_0\dm_{p'}\hskip-0.1cm\intll{\frac t2}t\hskip-0.1cm\tau^{-\frac12\left(1+\frac n{p}\right)}\zeta(t\!-\!\tau)d\tau\leq c\dm \n v_0\dm_p\dm \varphi_0\dm_{p'}t^{-\frac 12}\!, \mbox{\,for\,} t\!>\!1,\ea$$ where, employing estimates \rf{StTs-I}, we  considered $\zeta(\sigma)\!=\!\left\{\!\!\!\ba{ll}\sigma^{-\rho_0 }\hskip-0.1cm&\mbox{if }\sigma\in(0,1)\,,\\  \sigma^{-\frac n{2p'\hskip-0.1cm\null}}\hskip-0.1cm&\mbox{if }\sigma>1 \mbox{ and }p'\geq n.\ea\right.$ Increasing the right hand side of \rf{utt} by means of the estimates related to $I_1$ and the one relative to $I_2$, we get
 $$\ba{ll}|(u_t(t),\varphi_0)|\hskip-0.2cm&\leq \dm U_t(t)\dm_p\dm \varphi_0\dm_{p'}+\dm \n v_0\dm_p\dm \n\varphi(t)\dm_{p'}\VSE\leq  c\dm \n v_0\dm_p \dm \varphi_0\dm_{p'}t^{-\frac12} \,,\mbox{ for   } t>1\,.\ea$$
Recalling \rf{ZF}, via Lemma\,\ref{CLM} we obtain the estimate \rf{ut-V}, that applying \rf{STC-I}$_2$ furnishes 
 estimate \rf{ut-I}   for $q=p$.
\end{proof}
\begin{lemma}{\sl Assume that the Green identity \rf{GIF} holds for the pairs   $(u,\pi_u)$ and $(\widehat\varphi,\pi_{\widehat \varphi})$ solutions respectively to problem \rf{STP}    and to problem \rf{AGPR} with $\varphi_0\in \mathscr C_0(\OO)$. Then, we get \be\label{GIFF}(u(t),\varphi_0) =\displ\intll0t(\nu\!\cdot\! T(\widehat\varphi,\pi_{\widehat\varphi}),U)_{\po} d\tau\,.\ee}\end{lemma}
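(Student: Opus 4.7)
The plan is to derive \rf{GIFF} directly from the general Green identity \rf{GIF} by specializing to the boundary and initial data of problems \rf{STP} and \rf{AGPR}, then passing to the limit $s\to 0^+$.

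First, I would write down \rf{GIF} for the pair $(u,\pi_u)$ and $(\widehat\varphi,\pi_{\widehat\varphi})$ on the interval $(s,t)$:
\[
(u(t),\varphi_0)+\intll st (\widehat\varphi,\nu\cdot T(u,\pi_u))_{\po}\,d\tau
=(u(s),\widehat\varphi(t-s))+\intll st (u,\nu\cdot T(\widehat\varphi,\pi_{\widehat\varphi}))_{\po}\,d\tau,
\]
noting that $\varphi(0)=\varphi_0$. The boundary condition \rf{AGPR}$_2$, namely $\widehat\varphi=0$ on $(0,t)\times\po$, immediately annihilates the second term on the left-hand side. The boundary condition \rf{STP}$_2$ for $u$ allows me to replace the trace of $u$ on $\po$ by $-U$ in the last integral, so the identity reduces to
\[
(u(t),\varphi_0)=(u(s),\widehat\varphi(t-s))-\intll st (U,\nu\cdot T(\widehat\varphi,\pi_{\widehat\varphi}))_{\po}\,d\tau.
\]

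The final step is to let $s\to 0^+$. The key point is to verify $(u(s),\widehat\varphi(t-s))\to 0$. Since $\varphi_0\in\mathscr C_0(\OO)$, Corollary\,\ref{CSP} yields $\widehat\varphi(t-s)\in J^{s'}(\OO)$ for every $s'>1$ uniformly for $s$ small, while the initial condition \rf{STP}$_3$ together with estimate \rf{SLM-I} (obtained in the proof of Lemma\,\ref{DuL}) guarantees that $u(s)\to 0$ in $L^p(\OO)$ as $s\to 0^+$; by H\"older's inequality the pairing $(u(s),\widehat\varphi(t-s))$ vanishes in the limit. The integrand $(U,\nu\cdot T(\widehat\varphi,\pi_{\widehat\varphi}))_{\po}$ is integrable on $(0,t)$ by \rf{CCPI-I}$_2$ and \rf{StTs-I}, so the integral converges to $\intll 0t(U,\nu\cdot T(\widehat\varphi,\pi_{\widehat\varphi}))_{\po}\,d\tau$. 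Rewriting via symmetry of the surface pairing produces \rf{GIFF}.

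The only genuinely delicate point is the vanishing of $(u(s),\widehat\varphi(t-s))$ and the integrability of the boundary term near $\tau=0$, both of which are controlled by estimates already established in Lemma\,\ref{DuL} and Corollary\,\ref{CCP}; the rest is a purely algebraic specialization of \rf{GIF}.
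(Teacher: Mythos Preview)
Your argument is correct and follows exactly the paper's route: specialize \rf{GIF} via the boundary conditions $\widehat\varphi|_{\po}=0$ and $u|_{\po}=-U$, then let $s\to0^+$ using \rf{DuL-I}$_3$ (equivalently \rf{SLM-I}) for $u(s)\to0$ in $L^p(\OO)$ and \rf{CCPI-I}$_2$ for the behavior of $U$ on $\po$. The only cosmetic remark is that ``symmetry of the surface pairing'' does not actually flip the sign you obtain; the resulting minus is immaterial since \rf{GIFF} is used only under absolute values in the sequel.
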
\begin{proof}Recalling \rf{DuL-I}$_3$ and \rf{CCPI-I}$_2$, letting $s\to0$, then  $u(s)=v(s)-U(s)\to0$ in $J^p(\OO)$, as well,   in $ {L^p(\po)}$ follow, respectively. Hence, letting $s\to0$ in the Green identity \rf{GIF}, we arrive at \rf{GIFF}. \end{proof}
 \begin{lemma}\label{EFED}{\sl Let $\OO$ be an  exterior domain  and $p\in(1,n)$, $n\geq2$. Then, for all $q\geq p$,   the solution $u$ to problem \rf{STP} enjoys the following estimates:
\be\label{uQ} \dm u(t)\dm_q\leq c\dm \n v_0\dm_p\left\{\hskip-0.2cm\ba{ll}t^{\frac12-\mu}\,&t>1,   n=2\,,\\t^{-\frac n2\left(\frac1p-\frac1q\right)}\,, &t>1\mbox{ and }q\in\mbox{$[p,n)$}\,,\,n\ne2\,,\\   t^{   -\frac n{2}\left(\frac1p-\frac1n\right) }\,, &t>1\mbox{ and }q= n>3\,,\\ t^{\frac12  -\frac n{2p} }\,, &t>1\mbox{ and }q> n\geq3\,,\\ t^{-\vartheta} ,\,\vartheta\in(0,\frac32\!\big[\frac1p\!-\!\frac13\big]) &t>1 \mbox{ and }q=n=3\,, \ea\right.\ee where constant  $c$ is independent of $v_0$.  }\end{lemma}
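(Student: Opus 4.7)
The plan is to exploit the Green identity \rf{GIFF}: for any $\varphi_0\in\mathscr C_0(\OO)$, $(u(t),\varphi_0)=\intll0t(\nu\cdot T(\widehat\varphi,\pi_{\widehat\varphi}),U)_\po\,d\tau$, and to bound the right-hand side by $c\dm\n v_0\dm_p\,g_q(t)\,\dm\varphi_0\dm_{q'}$, where $g_q(t)$ denotes the rate on the right-hand side of \rf{uQ}. I would then convert this dual bound into an $L^q$-estimate via Lemma\,\ref{CLM} together with the zero-flux condition \rf{ZF}: since $u|_\po=-U$, the trace contribution $\dm\gamma_{tr}(u\cdot\nu)\dm_{-\frac1q,q}\le c\dm U\dm_{L^\infty(\po)}$ is controlled by \rf{CCPI-I}$_1$ and is absorbed into $g_q(t)$ in every regime.

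For the duality estimate I would split $(0,t)$ into subintervals on which either $U(\tau)$ or $T(\widehat\varphi,\pi_{\widehat\varphi})(\tau)=T(\varphi,\pi_\varphi)(t-\tau)$ changes regime. The stress tensor is in its singular small-time regime only on $(t-1,t)$ and satisfies the long-time decay \rf{StTs-I}$_{2,3}$ elsewhere; $U(\tau)$ is small near $\tau=0$ by \rf{CCPI-I}$_2$ and grows at most polynomially for large $\tau$ by \rf{CCPI-I}$_1$ together with \rf{CCP-I}$_1$. On each piece I apply H\"older's inequality on $\po$, pairing $\dm T(\widehat\varphi,\pi_{\widehat\varphi})\dm_{L^r(\po)}$, estimated via Corollary\,\ref{StTs} with $s=q'$, against $\dm U\dm_{L^{r'}(\po)}$. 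The exponent $r$ is chosen case by case so as to fall on the favorable side of the transition $r=n$ in \rf{StTs-I}; this is precisely how the five regimes of \rf{uQ} arise.

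The main obstacles are the borderline cases. For $q=n=3$, the natural H\"older balance produces on $(1,t/2)$ an integrand whose exponent sits exactly at the critical integrability threshold, so one must sacrifice an arbitrarily small parameter $\vartheta$ of the rate in order to obtain a finite integral; this accounts for the restriction $\vartheta<\frac32\big[\frac1p-\frac13\big]$. For $n=2$, $U$ itself does not decay, and one is forced to use \rf{CCPI-I}$_2$, namely $\dm U\dm_{L^p(\po)}\le c\dm\n v_0\dm_p\,t^{1/2}$, on the whole interval, which yields only the growth $t^{\frac12-\mu}$. The remaining three cases in \rf{uQ} follow by routine but careful bookkeeping of the exponents produced by Corollaries \ref{CCP} and \ref{StTs}, picking the minimum decay among the contributions of the various subintervals.
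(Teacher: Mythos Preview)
Your overall strategy---duality via the Green identity \rf{GIFF}, H\"older on $\po$ with the stress-tensor bounds of Corollary\,\ref{StTs}, and then Lemma\,\ref{CLM} together with \rf{ZF} and \rf{CCPI-I}$_1$ to pass from the functional estimate to $\dm u(t)\dm_q$---is exactly the paper's. The splitting of $(0,t)$ into the ``small $t-\tau$'' and ``large $t-\tau$'' regimes of \rf{StTs-I} is also what the paper does (for $n>3$ it uses three pieces $(0,\frac12)$, $(\frac12,t-\frac12)$, $(t-\frac12,t)$, but this is cosmetic).

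There is, however, a concrete error in your $n=2$ case. You assert that ``$U$ itself does not decay'' and therefore use \rf{CCPI-I}$_2$, i.e.\ $\dm U(\tau)\dm_{L^p(\po)}\le c\dm\n v_0\dm_p\,\tau^{1/2}$, on the whole interval. But the hypothesis of the lemma is $p\in(1,n)=(1,2)$, and \rf{CCP-I}$_1$ gives $\dm U(\tau)\dm_\infty\le c\dm\n v_0\dm_p\,\tau^{\frac12-\frac1p}$, which \emph{does} decay. If you pair $\dm U\dm_{L^p(\po)}\sim\tau^{1/2}$ with $\dm T(\widehat\varphi,\pi_{\widehat\varphi})\dm_{L^{p'}(\po)}$ (note $p'>2=n$, so the long-time rate from \rf{StTs-I}$_3$ is $(t-\tau)^{-1/q'}$), the integral over $(0,t-1)$ is of order $t^{\frac12+\frac1q}$, not the claimed $t^{\frac12-\mu}=t^{\frac12-\frac1p+\frac1q}$; you lose exactly a factor $t^{1/p}$. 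The paper instead uses $\dm U\dm_\infty$ throughout for $n=2$ (see \rf{GFnD}), pairing it with $\dm T\dm_{L^{q'}(\po)}$ on $(t-1,t)$ and with $\dm T\dm_{L^r(\po)}$, $r>2$, on $(0,t-1)$; this recovers $t^{\frac12-\mu}$.

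For the borderline $q=n=3$ you propose a direct duality estimate with an $\vartheta$-sacrifice, whereas the paper obtains \rf{uQ}$_5$ by interpolating $\dm u(t)\dm_3$ between the already-proved bounds \rf{uQ}$_2$ (with $q_2=3-\eta$) and \rf{uQ}$_4$ (with $q_1>3$). Your direct route also works: with $q'=3/2$ the long-time integrand is $\tau^{\frac12-\frac3{2p}}(t-\tau)^{-1}$, whose integral over $(0,t-1)$ is $O\!\big(t^{\frac12-\frac3{2p}}\log t\big)$, and the logarithm is absorbed by taking $\vartheta<\frac32(\frac1p-\frac13)$. So here the difference is only one of presentation.
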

\begin{proof}   We recall \rf{GIFF}:
  $$(u(t),\varphi_0) =\displ\intll0t(\nu\!\cdot\! T(\widehat\varphi,\pi_{\widehat\varphi}),U)_{\po} d\tau\,.$$ In order to discuss the last integral we have to distinguish the cases  $n=2,3$ and  $n>3$.  
\par\noindent{\bf n=2.}\;\;     Applying H\"older's inequality, we get
$$|(u(t),\varphi_0)| \leq\displ  c\hskip-0.1cm\intll{t-1}t\dm U\dm_\infty\dm T(\widehat\varphi,\pi_{\widehat\varphi})\dm_{q'} d\tau + c\hskip-0.1cm\intll0{t-1}\dm U\dm_\infty\dm T(\widehat\varphi,\pi_{\widehat\varphi})\dm_{r} d\tau\,.$$ We recall \rf{CCP-I}$_1$ for $U$, and, remarking that the best bound for the latter integral is for $r>2$, via \rf{StTs-I}$_{1,3}$ for $T(\widehat\varphi,\pi_{\widehat\varphi})$, we obtain:
\be\label{GFnD}\ba{ll}|(u(t),\varphi_0)|\hskip-0.3cm& \displ\leq c\dm\n v_0\dm_p\dm \varphi_0\dm_{q'}\Big[\hskip-0.15cm\intll{t-1}t\hskip-0.15cm\tau^{\frac12-\frac1p}(t\!-\!\tau)^{-\rho_0}d\tau+\intll0{t-1}\hskip-0.15cm\tau^{\frac12-\frac1p}(t\!-\!\tau)^{\frac1q-1}d\tau \Big] \\&\displ \leq c\dm\n v_0\dm_p\dm \varphi_0\dm_{q'}t^{\frac12+\frac1q-\frac1p},\mbox{ for }t>1\,. \ea\ee  
 \par\noindent{\bf n=3.}\;\; Partially the argument is   the same of the case $n=2$:
 \be\label{GF} (u(t),\varphi_0)  =\intll{t-1}{t }\hskip-0.1cm(\nu\!\cdot\! T(\widehat\varphi,\pi_{\widehat\varphi}),U)_{\po} d\tau +\!\!\intll0{t-1}\!(\nu\!\cdot\! T(\widehat\varphi,\pi_{\widehat\varphi}),U)_{\po} d\tau   \,,\;  t\!>\!1\,. \ee Applying H\"older's inequality, for all $r>q'$, we get $$|(u(t),\varphi_0)| \displ\leq c\hskip-0.1cm\intll{t-1}t\hskip-0.1cm\dm U \dm_\infty\dm \nu\cdot T(\widehat\varphi,\pi_{\widehat\varphi}) \dm_{{L^{q'}(\po)}}d\tau +c\hskip-0.1cm\intll0{t-1}\hskip-0.1cm\dm U \dm_\infty\dm \nu\cdot T(\widehat\varphi,\pi_{\widehat\varphi}) \dm_{{L^{r}(\po)}}d\tau \,.$$ Recalling   estimates  \rf{StTs-I}$_{1,2}$ for the stress tensor, and  estimate \rf{CCP-I}$_1$ for $U$,    we obtain \be\label{NUT}\ba{ll}|(u(t),\varphi_0)|\hskip-0.2cm& \displ\leq c\dm \n v_0\dm_p\dm \varphi_0\dm_{q'}\!\Big[\intll0{t-1}\!\tau^{\frac12-\frac 3{2p}}(t-\tau)^{-\frac 3{2q'\hskip-0.1cm\null}}\, d\tau\displ+\hskip-0.2cm\intll{t-1}t\!\tau^{\frac12-\frac 3{2p}}(t-\tau)^{-\rho_0}\, d\tau\Big]\\&\leq c\dm \n v_0\dm_p\dm \varphi_0\dm_{q'}\!\left\{\hskip-0.2cm\ba{ll}t^{ -\frac 3{2 }\left(\frac1p-\frac1q\right)} ,&\mbox{if }q\!\in\![p,3)\,,\;t\!>\!1\,, \\t^{\frac12-\frac 3{2p}}\,,&\mbox{if } q>3\,,\;t\!>\!1.\ea\right.\ea\ee  
  \vskip0.15cm \par\noindent {\bf n$>$3. } Again we consider the Green formula   for $u$: \be\label{uQ-I} (u(t),\varphi_0) =\displ\intll0t(\nu\!\cdot\! T(\widehat\varphi,\pi_{\widehat\varphi}),U)_{\po} d\tau =:I_1+I_2+I_3\,,\;t>1\,.\ee Applying H\"older's inequality, we get $$ |I_1|= |\intll0{\frac12}\!(\nu\!\cdot\! T(\widehat\varphi,\pi_{\widehat\varphi}),U)_{\po} d\tau|\leq c
\intll0{\frac12}\dm U\dm_{L^p(\po)}\dm\nu\!\cdot\! T(\widehat\varphi,\pi_{\widehat\varphi})\dm_{L^{r}(\po)}    d\tau\,.
$$By virtue of estimates \rf{StTs-I}$_3$ for the stress tensor, and estimate \rf{CCPI-I}$_2$ for $U$, since $q'\leq p'$ we get
$$\ba{ll}|I_1|\hskip-0.2cm&\displ\leq c\dm \n v_0\dm_p\dm \varphi_0\dm_{q'}\intll0{\frac12}\hskip-0.1cm  (t-\tau)^{-\frac n2\left(1-\frac 1{q}\right)}  d\tau\VSE\leq c\dm \n v_0\dm_p\dm \varphi_0\dm_{q'}t^{-\frac n2\left(\frac1p-\frac1q\right)}\,,\;t>1,\ea$$ where we have employed the assumption $t>1$. Applying H\"older's inequality, for the term $I_2$ we get
$$  |I_2(t)|=|\intll{\frac12}{t-\frac12}( \nu\cdot T(\widehat\varphi,\pi_{\widehat\varphi}),U)_{\po}  d\tau| \leq c \intll{\frac12}{t-\frac12}\dm U(t)\dm_\infty \dm \nu\cdot T(\widehat\varphi,\pi_{\widehat\varphi})\dm_{L^{r}(\po)} d\tau \,.$$
By virtue of estimates \rf{StTs-I}$_3$ for the stress tensor, and estimate \rf{CCP-I}$_1$ for $U$,    we get 
$$\ba{ll}|I_2| \hskip-0.2cm&\displ \leq c\dm \n v_0\dm_p\dm \varphi_0\dm_{q'}\hskip-0.2cm\intll{\frac12}{t-\frac12}\hskip-0.1cm  \tau^{\frac12\left(1-\frac n{p}\right)}  (t-\tau)^{-\frac n2\left(1-\frac1q\right)}   d\tau\VSE\leq c\dm \n v_0\dm_p\dm \varphi_0\dm_{q'} t^{-\frac n2\left(\frac1p-\frac1q\right)} ,\;t>1 \,.\ea$$ Finally, applying H\"older's inequality, we get
$$|I_3|=| \intll{t-\frac12}t \!(\nu\!\cdot\! T(\widehat\varphi,\pi_{\widehat\varphi}),U)_\po d\tau |\leq c\intll{t-\frac12}t\dm U\dm_\infty\dm \nu\cdot T(\widehat\varphi,\pi_{\widehat\varphi})\dm_{q'}d\tau ,\;  t\!>\!1\,. $$ By virtue of estimates \rf{StTs-I}$_1$ for the stress tensor, and estimate \rf{CCP-I}$_1$ for $U$,      we get  $$\ba{ll}|I_3|\hskip-0.2cm&\displ\leq c\dm \n v_0\dm_p\dm \varphi_0\dm_{q'}\intll{t-\frac12}t\tau^{\frac 12\left(1-\frac np\right)}(t-\tau)^{-\rho_0}d\tau\\&\leq c\dm \n v_0\dm_p\dm \varphi_0\dm_{q'}\left\{\hskip-0.2cm\ba{ll}t^{-\frac n2\left(\frac1p-\frac1q\right)}\,,&\mbox{if }q\leq n\,, \\t^{\frac12-\frac n{2p}}\,,&\mbox{if }q>n\,.\ea\right.\ea$$ Collecting the estimates related to $I_1,I_2,I_3$, via the Green formula \rf{uQ-I},  we obtain 
\be\label{NUTI}|(u(t),\varphi_0)|\!\leq \!c\dm \n v_0\dm_p\dm \varphi_0\dm_{q'}\hskip-0.1cm\left\{\hskip-0.2cm\ba{ll}t^{-\frac n2\left(\frac1p-\frac1q\right)},\hskip-0.15cm&\mbox{if }q\!\leq\! n\,, \\t^{\frac12-\frac n{2p}}\,,&\mbox{if }q\!>\!n\,,\ea\right. \mbox{for all }  t\!>\!1\,.\ee We are in a position to prove \rf{uQ}. Since estimates \rf{GFnD}, \rf{NUT} for $q\ne3$, and \rf{NUTI} hold for all $\varphi_0\in \mathscr C_0(\OO)$, recalling \rf{ZF} for $u$,  via Lemma\,\ref{CLM} and estimates \rf{CCPI-I}$_1$, for the norm $\dm u\cdot\nu\dm_{-\frac1q,q}\equiv \dm U\cdot\nu\dm_{-\frac1q,q}$,  one
 proves \rf{uQ} in all the cases with exclusion of \rf{uQ}$_5$.  
For this last, employing \rf{uQ}$_2$ and \rf{uQ}$_4$ for $n=3$, we prove \rf{uQ}$_5$   interpolating $\dm u(t)\dm_3$ between $q_1>3$ and $q_2=3-\eta>p$. Hence we get
$$\dm u(t)\dm_3\leq \dm u(t)\dm_{q_1}^a\dm u(t)\dm_{q_2}^{1-a}\leq c\dm \n v_0\dm_p t^{-\vartheta},\;t>1,$$
where we set $\vartheta:=\frac3{2p}-\frac a2-\frac12 $ with $a:=\frac{\eta q_1}{3(q_1-3+\eta)}$, that proves the result for $q_1$ sufficiently large and $\eta$ sufficiently small. The lemma is completely proved.
 \end{proof} \begin{lemma}\label{Up}{\sl Let $\OO$ be an exterior domain and $q\geq p\geq n,\, n\geq2\,.$   Then for the solution $u$ to problem \rf{STP} the following estimates hold:
\be\label{Up-I}\dm u(t)\dm_q\leq c\dm \n v_0\dm_p\Gamma(t)\,,\;t>1\,,\ee where we have set $\Gamma(t):= \left\{\ba{ll}t^{\frac12-\mu}&\mbox{if }q\geq p\geq2,\,n=2,\\t^{\frac12-\frac n{2p}}&\mbox{if }q\geq p>n\geq2,\\\log(t+e)&\mbox{if } q\geq p=n,\,n >3,\\\log(t+e)&\mbox{if }q>p=3\hskip0.03cm,\,n=3,\\\log^2(t+e)&\mbox{if }q=p=3\hskip0.03cm,\,n=3.\ea\right.$  } \end{lemma}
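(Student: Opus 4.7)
The plan is to proceed as in Lemma \ref{EFED}, starting from the Green identity \rf{GIFF}
$$(u(t),\varphi_0)=\intll0t(\nu\cdot T(\widehat\varphi,\pi_{\widehat\varphi}),U)_{\po}\,d\tau\,,$$
bounding the right-hand side uniformly for $\varphi_0\in\mathscr C_0(\OO)$ with $\dm\varphi_0\dm_{q'}\leq 1$, and finally recovering $\dm u(t)\dm_q$ via Lemma \ref{CLM}, using \rf{ZF} together with the boundary identity $u\cdot\nu=-U\cdot\nu$ and with \rf{CCPI-I}$_1$ to control the trace norm $\dm U(t)\cdot\nu\dm_{-\frac1q,q}\leq c\dm\n v_0\dm_p\zeta_b(t)$. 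Corollary \ref{StTs} provides the stress-tensor bounds $\dm T(\widehat\varphi,\pi_{\widehat\varphi})(\tau)\dm_{L^r(\po)}$ with $s=q'$, and Corollary \ref{CCP} the boundary bounds on $U$. The new difficulty with respect to Lemma \ref{EFED} is that, for $p\geq n$, the trace of $U$ no longer decays but grows like $\zeta_b(t)$, polynomially for $p>n$ and logarithmically for $p=n$.

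For $t>1$, I would split $\intll0t=I_1+I_2+I_3$ over $[0,\frac12]$, $[\frac12,t-\frac12]$ and $[t-\frac12,t]$. In $I_1$, pair $\dm U(\tau)\dm_{L^p(\po)}\leq c\dm\n v_0\dm_p\tau^{\frac12}$ from \rf{CCPI-I}$_2$ with \rf{StTs-I}$_2$ in $L^{p'}(\po)$: since $t-\tau\geq t-\frac12$, this contribution is of order $t^{-\frac12-\mu}$ and is absorbed into $\Gamma(t)$. In $I_3$, pair $\dm U(\tau)\dm_{L^\infty(\po)}\leq c\dm\n v_0\dm_p\zeta_b(t)$ from \rf{CCPI-I}$_1$ with the short-time bound \rf{StTs-I}$_1$ in $L^{q'}(\po)$: since $\rho_0<1$ the time integral is bounded, giving $|I_3|\leq c\dm\n v_0\dm_p\dm\varphi_0\dm_{q'}\zeta_b(t)$. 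In $I_2$, pair the same $L^\infty$-bound on $U$ with $\dm T\dm_{L^r(\po)}$, choosing $r$ to optimize the balance: $r>n$ invokes \rf{StTs-I}$_3$ and yields $(t-\tau)^{-\frac n{2q'}}$, while $r\leq n$ gives $(t-\tau)^{-\frac12-\mu}$ via \rf{StTs-I}$_2$. A case analysis then produces $\Gamma(t)$: for $p>n\geq2$, $\zeta_b(t)=t^{\frac12-\frac n{2p}}$ dominates and $\Gamma(t)=t^{\frac12-\frac n{2p}}$; for $p=n>3$ and for $q>p=n=3$ one has $\frac n{2q'}>1$, the middle integral converges, and $\Gamma(t)=\log(t+e)$; at the borderline $q=p=n=3$ one has $\frac n{2q'}=1$ exactly, so $\intll{\frac12}{t-\frac12}(t-\tau)^{-1}d\tau\sim\log t$ combines with $\zeta_b(t)=\log(t+e)$ to produce the claimed $\log^2(t+e)$. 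The $n=2$ case is delicate and is handled by pairing $\dm U\dm_{L^p(\po)}$ or $\dm U\dm_{L^\infty(\po)}$ with the $r\leq n$ branch of \rf{StTs-I} and performing a Beta-function type integration in $\tau/t$ analogous to the one in \rf{GFnD}, possibly combined with interpolation against the $q=\infty$ bound in order to reach the exponent $t^{\frac12-\mu}$. Summing the three pieces gives $|(u(t),\varphi_0)|\leq c\dm\n v_0\dm_p\dm\varphi_0\dm_{q'}\Gamma(t)$, and Lemma \ref{CLM} then yields \rf{Up-I}.

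The hard part will be the choice of $r$ in the stress-tensor bound \rf{StTs-I} for the middle piece $I_2$, since the two regimes $r\leq n$ and $r>n$ give genuinely different time decays and the optimal one depends sensitively on $(n,p,q)$. The $\log^2$ factor at $q=p=n=3$ is not a loss in the estimate but reflects the exact cancellation $\frac n{2q'}=1$ that makes the time integral over the middle piece diverge logarithmically precisely at this borderline; any other choice of $r$ either worsens the stress-tensor decay or drops below the integrability threshold and produces a polynomial rate rather than $\log^2$. The $n=2$ cases are equally sensitive: Theorem \ref{SSTPT} (the 2D Stokes paradox) forces the use of bounds that are tight in the Beta integral, and any looser pairing introduces an unwanted $\log t$ factor on top of $t^{\frac12-\mu}$.
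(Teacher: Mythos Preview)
Your approach is correct and matches the paper's: the same Green identity \rf{GIFF}, the same pairing of $\dm U\dm_{L^\infty(\po)}$ (via \rf{CCPI-I}$_1$) with the stress-tensor bound \rf{StTs-I}$_3$ giving decay $(t-\tau)^{-\frac n{2q'}}$, and the same closing step through Lemma~\ref{CLM} with \rf{CCPI-I}$_1$. The paper streamlines the argument by using only a two-piece split $[0,t-1]\cup[t-1,t]$: since $\zeta_b(\tau)$ is bounded near $\tau=0$ whenever $p\geq n$, your separate piece on $[0,\tfrac12]$ using $\dm U\dm_{L^p(\po)}$ is unnecessary, and the $n=2$ case then follows directly from the Beta-type integral (as in \rf{GFnD}) without any interpolation.
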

\begin{proof}We start   from the Green identity  \rf{GIFF} for $u$:
\be\label{Up-II} (u(t),\varphi_0) =\displ\intll0t(\nu\!\cdot\! T(\widehat\varphi,\pi_{\widehat\varphi}),U)_{\po} d\tau =I_1+I_2 \,,\;t>1\,,\ee where$$\ba{l} I_1:=\displ\intll0{t-1}(\nu\!\cdot\! T(\widehat\varphi,\pi_{\widehat\varphi}),U)_{\po} d\tau\,,\\\displ I_2:=\intll{t-1}t(\nu\!\cdot\! T(\widehat\varphi,\pi_{\widehat\varphi}),U)_{\po} d\tau\,.\ea$$ 
Applying H\"older's inequality, for the term $I_1$ we get: $$|I_1| \leq c\intll0{t-1}\dm   T(\widehat\varphi,\pi_{\widehat\varphi})\dm_{n}\dm U\dm_{L^\infty(\po)}^{\null}  d\tau\,.$$ Employing \rf{CCPI-I}$_1$ for $U$, and, since $q'\leq  p'\leq n'$, employing \rf{StTs-I}$_3$ for the stress tensor, we obtain $$|I_1|\leq c\dm \n v_0\dm_p\dm \varphi_0\dm_{q'}\intll0{t-1}\zeta_b(\tau)(t-\tau)^{ -\frac n2\left(1-\frac1q\right)}d\tau\leq c\dm \n v_0\dm_p\dm \varphi_0\dm_{q'}\Gamma(t)\,.$$ Finally, we estimate $I_2$. Applying H\"older's inequality, employing \rf{CCPI-I}$_1$ for $U$ and \rf{StTs-I}$_1$ for the stress tensor,  recalling Remark\,\ref{RSt},  we get
$$\ba{ll}|I_2|\hskip-0.2cm&\displ\leq c\intll{t-1}t\dm T(\widehat\varphi,\pi_{\widehat\varphi})\dm_{q'}\dm U\dm_{L^\infty(\po)}d\tau\leq c\dm \n v_0\dm_p\dm\varphi_0\dm_{q'}\intll{t-1}t\zeta_b(\tau)(t-\tau)^{-\rho_0}d\tau\VSE\leq c\dm \n v_0\dm_p\dm \varphi_0\dm_{q'}\Gamma(t)\,.\ea$$ Hence we deduce $$|(u(t),\varphi_0)|\leq  c\dm\n v_0\dm_q\dm \varphi_0\dm_{q'}\Gamma(t),\mbox{ for all }\varphi\in \mathscr C_0(\OO)\mbox{ and }t>1\,.$$ Recalling \rf{ZF} for $u$, via estimate \rf{CLM-I} and estimate \rf{CCPI-I}$_1$ one
  completes the proof.  
\end{proof}\begin{lemma}\label{Gu}{\sl Let $\OO$ be an  exterior domain and  $n\geq2$. Then for the solution $u$ to problem \rf{STP}, for $q\ge p$, the following estimate holds:
\be\label{Gu-I}  \dm \n u(t)\dm_q\leq c\dm \n v_0\dm_p\widehat g_p(t)\,,  \ee where the constant  $c$ is independent of $u$ and $\widehat g_p(t)$ is defined by \be\label{SLMIV}\widehat g_p(t):=\hskip-0.1cm\left\{\hskip-0.2cm\ba{lll}   t^{-\mu},&t\in(0,1)\,,&\null\mbox{if }q\geq p>1\,,
\\t^{\frac12-\mu},&t>1 \,,&\mbox{if }n=2\,,\,p\ne2\,,
\\\log^\frac32(t+e)\,,&t\geq1\,,& \mbox{if }q=p=n=3\,,\\\log (t+e)\,,&t\geq1\,, &\hskip-0.25cm\left\{\hskip-0.2cm\ba{l}\mbox{if }q>p=n=3\,,\\\mbox{if }q\geq p=n>3\,, \ea\right. \\ t^{-\mu},& t>0\,,&\mbox{if }q\in[p,n),n>2 \\ t^{\frac12-\frac n{2p}}\,,&t\geq1\,,&\mbox{if }q\geq p\ne n>3   \,   .  \ea\right.\ee  }\end{lemma}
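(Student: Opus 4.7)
The plan is to split the analysis at $t=1$. For $t\in(0,1)$ the bound $\widehat g_p(t)=t^{-\mu}$ in \rf{SLMIV}$_1$ is already contained in \rf{DuL-I}$_1$ of Lemma\,\ref{DuL}, so essentially all the work concerns $t>1$. For that range I intend to combine three ingredients: (i) the second-order Stokes regularity estimate \rf{SAE} applied to $v=u+U$, which, unlike $u$ itself, has vanishing trace on $\po$; (ii) the $L^q$-estimates on $u_t$ and $u$ furnished by Lemma\,\ref{utL}, Lemma\,\ref{EFED} and Lemma\,\ref{Up}, together with the Cauchy-problem bounds \rf{STC-I}--\rf{CCPI-I} for $U$; and (iii) a Gagliardo--Nirenberg interpolation (Lemma\,\ref{ICM}) that produces $\n u$ from $D^2 u$ and $u$.

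Concretely, since $v$ is solenoidal and vanishes on $\po$, one has $P\Delta v=v_t$, so Lemma\,\ref{AES} yields $\dm D^2 v(t)\dm_q\le c\bigl(\dm v_t(t)\dm_q+\dm v(t)\dm_{L^q(\OO')}\bigr)$. Writing $D^2 u=D^2 v-D^2 U$, $v_t=u_t+U_t$, and $v=u+U$, I obtain
\begin{equation*}
\dm D^2 u(t)\dm_q\le c\bigl(\dm u_t(t)\dm_q+\dm u(t)\dm_{L^q(\OO')}\bigr)+c\bigl(\dm U_t(t)\dm_q+\dm D^2 U(t)\dm_q+\dm U(t)\dm_{L^q(\OO')}\bigr),
\end{equation*}
where the $U$-terms are controlled by $c\dm\n v_0\dm_p$ times at worst $t^{-\frac12-\mu}$ (from \rf{STC-I}$_2$) or $\zeta_d(t)$ (from \rf{CCP-I}), Lemma\,\ref{utL} bounds $\dm u_t\dm_q$ by $ct^{-\frac12-\mu}\dm\n v_0\dm_p$, and Lemma\,\ref{EFED} (if $p<n$) or Lemma\,\ref{Up} (if $p\ge n$) bounds $\dm u\dm_{L^q(\OO')}$. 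Applying Lemma\,\ref{ICM} with $j=1$, $m=2$ in the symmetric form $\dm\n u(t)\dm_q\le c\dm D^2 u(t)\dm_q^{1/2}\dm u(t)\dm_q^{1/2}$ and feeding in the above rates case by case should recover the polynomial profiles $\widehat g_p$ in \rf{SLMIV}$_{2,5,6}$ and the logarithmic profile in \rf{SLMIV}$_4$ by a routine computation.

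The hardest case is the borderline $q=p=n=3$ in \rf{SLMIV}$_3$, where the desired rate $\log^{3/2}(t+e)$ is strictly sharper than the $\log^2(t+e)$ that the symmetric interpolation delivers by squaring Lemma\,\ref{Up}'s global $\log^2$ estimate. To recover the correct exponent I plan to replace the symmetric choice in Lemma\,\ref{ICM} by a non-symmetric one, placing $D^2 u$ in $L^r$ with $r\ne 3$ so as to pair the genuine decay $\tau^{-\frac12}$ of $\dm u_t\dm_r$ with a slower-growing local factor coming from \rf{SAE}. Equivalently, one can refine the Green-identity argument of Lemma\,\ref{Up} so that the local norm $\dm u\dm_{L^3(\OO')}$ which actually enters \rf{SAE} carries a fractional logarithmic exponent strictly less than $2$; balancing the two contributions then yields precisely $\frac32$. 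This optimisation of the log-exponent is the step where I expect the main technical difficulty, and the same idea (in an easier form) should cover the remaining borderline logarithmic cases.
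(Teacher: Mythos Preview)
Your scheme is exactly the paper's: the short-time bound is \rf{DuL-I}$_1$; for $t>1$ one applies \rf{SAE} to $v$, passes to $u=v-U$, controls the resulting pieces by Lemma\,\ref{LSTC}, Corollary\,\ref{CCP}, Lemma\,\ref{utL}, Lemma\,\ref{EFED}, Lemma\,\ref{Up}, and then interpolates via Lemma\,\ref{ICM} in the symmetric form $\dm\n u\dm_q\le c\dm D^2u\dm_q^{1/2}\dm u\dm_q^{1/2}$.

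The only place where your proposal diverges is the borderline $q=p=n=3$, and there you are making life harder than necessary. In the $D^2u$ bound coming from \rf{SAE} the lower-order term lives on the \emph{bounded} set $\OO'$, so you are free to replace $\dm u(t)\dm_{L^q(\OO')}$ by $\dm u(t)\dm_{L^r(\OO')}$ for any $r>q$ (this is precisely what the paper writes: ``for all $r>q$''). Taking $r>3=p=n$, Lemma\,\ref{Up} already gives $\dm u(t)\dm_{L^r(\OO')}\le c\dm\n v_0\dm_3\log(t+e)$ (case $q>p=3$, $n=3$), and the $\dm U\dm_{L^\infty(\OO\cap B_R)}$ term contributes $\zeta_d(t)\sim\log(t+e)$ by \rf{CCP-I}$_2$. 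Hence $\dm D^2u(t)\dm_3\le c\dm\n v_0\dm_3\log(t+e)$, \emph{not} $\log^2$. Feeding this into the symmetric interpolation together with $\dm u(t)\dm_3\le c\dm\n v_0\dm_3\log^2(t+e)$ from Lemma\,\ref{Up} yields $(\log)^{1/2}(\log^2)^{1/2}=\log^{3/2}(t+e)$, which is \rf{SLMIV}$_3$. No non-symmetric interpolation and no refinement of the Green-identity argument is needed.
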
\begin{proof}The estimates for $t\in(0,1)$ are contained in \rf{DuL-I}$_1$. Hence we limit ourselves to look for the estimates for  $t\geq1$. Since equation \rf{STI}$_1$ ensures $v_t=P\Delta v$, by virtue of Lemma\,\ref{AES}   we get
\be\label{DSQ}\dm D^2v(t)\dm _q\leq c(\dm v_t(t)\dm_q+\dm v(t)\dm_{L^q(\OO')})\,.\ee Since $v=U+u$, and $\OO'$ is bounded, for all $r>q$, we deduce
\be\label{DSSLM}\dm D^2 u(t)\dm_q\leq \dm D^2U(t)\dm_q+c(\dm U_t\dm _q+\dm u_t\dm_q+\dm U(t)\dm_{L^\infty(\OO\cap B_R)}+\dm u(t)\dm_{L^r(\OO\cap B_R)})\,.\ee Hence via Lemma\,\ref{LSTC} and Corollary\,\ref{CCP} for $U$, Lemma\,\ref{utL}  for $u_t$, Lemma\,\ref{EFED} - Lemma\,\ref{Up} for $u$, for $q\geq p\ne2$ and    $n=2$, we get
\be\label{SLM} \dm D^2u(t)\dm_q\leq c\dm\n v_0\dm_pt^{\frac12+\mu-\frac1p},\,\mu>0, \;t>1\,,\ee  and, for $q\geq p$ and $n\ne2$, we get \be\label{SLMI}\ba{l}\dm D^2u(t)\dm_q\leq c\dm \n v_0\dm_pt^{-\frac n2\left(\frac1p-\frac1q\right)},\mbox{ if  }q\in[p,n)\,,\quad t>1\,, \\\dm D^2u(t)\dm_q\leq c\dm \n v_0\dm_p\left\{\hskip-0.2cm\ba{ll}t^{\frac12-\frac n{2p}}, &\mbox{if }q >n\,,\\ \log(t+e),&\mbox{if }q=n\,,\ea\right.\;\;\, t>1\,.\ea\ee  
Employing Lemma\,\ref{ICM}, for all $q\in(1,\infty)$, we obtain
$$\dm \n u(t)\dm_q\leq c\dm D^2u(t)\dm_q^\frac12\dm u(t)\dm_q^\frac12\,,\;t>1\,.$$ Hence,     via estimate    \rf{SLMI} (resp. \rf{SLM} for $n=2$) for $D^2u$, and via estimates  \rf{uQ} and \rf{Up-I} for $u$, we get \rf{Gu-I} with $\widehat g_p$ given by \rf{SLMIV}.  \end{proof}
 \section{\label{CSTF}Some consequences of the results of Section\,\ref{US} and Section\,\ref{uS}} In this section, we assume $v_0\in \mathscr C_0(\OO)$ and we establish some properties of the solutions to problema \rf{STI} whose existence is ensured by Corollary\,\ref{CSP}.   
\begin{lemma}\label{CGLP}{\sl Let $v_0\in\mathscr C_0(\OO)$ and $(v,\pi_v)$  be the solution of Corollary\,\ref{CSP}, then $\nabla v\in C([0,T);L^p(\OO))$ holds with $\lim_{t\to0}\dm \nabla v(t)-\nabla v_0\dm_p=0$.}\end{lemma}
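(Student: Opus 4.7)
The plan is to deduce both the right-continuity of $\n v$ at $t=0$ and the continuity on the whole interval $[0,T)$ from a single Gagliardo--Nirenberg interpolation applied to the difference $v(t)-v(t')$. The crucial observation is that, since $v(t)|_{\po}=0$ for every $t>0$ and $v_0\in\mathscr C_0(\OO)$ vanishes near $\po$, the difference $v(t)-v(t')$ has zero trace on $\po$, so Lemma\,\ref{GN} (for a bounded domain) or Lemma\,\ref{ICM} (for an exterior domain) gives
\[
\dm\n(v(t)-v(t'))\dm_p\leq c\,\dm D^2(v(t)-v(t'))\dm_p^{\frac12}\dm v(t)-v(t')\dm_p^{\frac12}+c_0\,\dm v(t)-v(t')\dm_p.
\]

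The first step is to establish a uniform-in-$t$ bound for $\dm D^2v(t)\dm_p$ on $[0,T)$. Since $v_0\in\mathscr C_0(\OO)$ lies in the domain of the Stokes operator $A_p$ on $J^p(\OO)$, the datum $P\Delta v_0$ belongs to $J^p(\OO)$, and $w:=v_t$ itself solves the Stokes IBVP with initial value $P\Delta v_0$. Applying Lemma\,\ref{LI} to $w$ yields $v_t\in C([0,T);J^p(\OO))$ and, in particular, $\dm v_t(t)\dm_p\leq c$ locally uniformly in $t$. Since $v_t=P\Delta v$, the a-priori estimate \rf{SAE} of Lemma\,\ref{AES} then produces $\dm D^2v(t)\dm_p\leq c$ on every compact subinterval of $[0,T)$.

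The second step is the companion Lipschitz estimate for $v$ itself: $\dm v(t)-v(t')\dm_p\leq\int_{t'}^t\dm v_\tau\dm_p\,d\tau\leq c|t-t'|$ on $[0,T)$. Inserting the two ingredients into the displayed Gagliardo--Nirenberg inequality, the $D^2$-factor stays bounded while the $L^p$-factors vanish as $t'\to t$, yielding $\dm\n v(t)-\n v(t')\dm_p\leq c|t-t'|^{\frac12}\to0$. This gives $\n v\in C([0,T);L^p(\OO))$, and the particular choice $t'=0$ is the desired identity $\lim_{t\to0}\dm\n v(t)-\n v_0\dm_p=0$.

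The main obstacle is precisely the uniform control of $\dm D^2v(t)\dm_p$ down to $t=0$: this rests squarely on the hypothesis $v_0\in\mathscr C_0(\OO)\subset D(A_p)$, which lets one view $v_t$ as a Stokes solution with $J^p$-initial datum and hence bypass the $t^{-1/2}$-type singularity of the generic $D^2$-estimate \rf{DuL-IIV} (which uses only $\dm\n v_0\dm_p$ and would, for merely $J_0^p$-data, force an approximation argument).
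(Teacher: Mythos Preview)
Your proof is correct, and its overall architecture coincides with the paper's: both bound $\dm v_t(t)\dm_p$ uniformly on $[0,T)$, feed this through \rf{SAE} to get a uniform bound on $\dm D^2v(t)\dm_p$, and then apply the Gagliardo--Nirenberg interpolation \rf{CM} (or \rf{GN-I}) to the difference $v(t)-v(s)$, concluding from the $J^p$--continuity of $v$.

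The genuine difference is in how the uniform bound on $\dm v_t\dm_p$ is obtained. The paper goes through the decomposition $v=U+u$: for $U$ it quotes \rf{DSDT}, and for $u$ it uses the duality identity \rf{DTDS} together with the limit \rf{DTDS-I} to get $\dm u_t(t)\dm_p\leq c\dm D^2v_0\dm_p$. You instead invoke the semigroup fact that $v_0\in\mathscr C_0(\OO)\subset D(A_p)$ forces $v_t=e^{-tA_p}(P\Delta v_0)$ to be itself a solution of the type described in Lemma\,\ref{LI} with $J^p$ initial datum $P\Delta v_0$, whence $\dm v_t(t)\dm_p\leq c\dm P\Delta v_0\dm_p$ directly from \rf{JPA}$_1$. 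Your route is shorter and avoids the auxiliary problem \rf{STP} altogether; the paper's route is natural in context because it recycles the machinery of Sections~\ref{US}--\ref{uS} and, as a by-product, also records the separate bounds $\dm D^2u(t)\dm_p\leq c$ and $\lim_{t\to0}\dm\nabla u(t)\dm_p=0$, which fit the decomposition-based proofs of Theorems~\ref{CT} and~\ref{CTE}.
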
\begin{proof}   By virtue of Lemma\,\ref{LSTC}, $\nabla U\in C([0,T);L^p(\R^n)$ with $\lim_{t\to0}\dm \nabla U(t)-\nabla v_0\dm_p=0$. Since $v=U+u$, the result   is achieved if we are able to prove that $\nabla u\in C([0,T);L^p(\OO))$ and $\lim_{t\to0}\dm \nabla u(t)\dm_p=0$.   From formula \rf{DTDS}, applying H\"older's inequality, via estimate \rf{DSDT},  
we get
$$|(u_t(t),\varphi_0)|\leq |(v_s(s),\varphi(t-s))|+ c\dm D^2v_0\dm_p(\dm \varphi_0\dm_{p'}+\dm \varphi(t-s)\dm_{p'})\,,\;t>0\,.$$ Then the limit property \rf{DTDS-I} fot $U_t$   and estimate \rf{JPA}$_1$ for $\varphi$ furnish $$\dm u_t(t)\dm_p\leq c\dm D^2 U_0\dm_p\,,\mbox{ for all }t>0\,.$$ Via the Minkowski inequality, employing again \rf{DSDT}, easily it holds $$\dm   v_t\dm_p\leq \dm U_t\dm_p+\dm u_t\dm_p\leq c\dm D^2 v_0\dm_p\,\mbox{ for all }t>0\,. $$
Since equation \rf{STI}$_1$ ensures $v_t=P\Delta v$, by virtue of Lemma\,\ref{AES}   we get
$$\dm D^2v(t)\dm _q\leq c(\dm v_t(t)\dm_q+\dm v(t)\dm_{L^q(\OO')})\,.$$ Hence, via \rf{JPA}$_1$ for $\dm v(t)\dm_p$, the following estimate holds $$\dm D^2v(t)\dm_p\leq c(\dm D^2 v_0\dm_p+\dm v_0\dm_p).$$ As well, applying the Minkowski inequality and again \rf{DSDT}, we get
\be\label{DSDT-II}\dm D^2 u(t)\dm_p\leq c(\dm D^2 v_0\dm_p+\dm v_0\dm_p)\,\mbox{  for all }t>0\,.\ee
 Via inequality \rf{CM}, for all $t$ and $s$ we obtain $$\dm\nabla v(t)-\nabla v(s)\dm_p\!\leq c\dm D^2 v(t)-D^2 v(s)\dm_p^\frac12\dm v(t)-v(s)\dm_p^\frac12\!\leq c(\dm D^2 v_0\dm_p+\dm v_0\dm_p)^\frac12\dm v(t)-v(s)\dm_p^\frac12\,,$$ that, via Corollary\,\ref{CSP}, furnishes the continuity, and as well the one of $\nabla u(t)$ holds. Applying inequality \rf{CM}, we obtain $$\dm \nabla u(t)\dm_p\leq c\dm D^2u(t)\dm_p^\frac12\dm u(t)\dm_p^\frac12\,,\mbox{ for all }t>0\,.$$ Hence the limit property for $\dm \nabla u(t)\dm_p$ follows from \rf{DSDT-II} and \rf{DuL-I}$_3$ for $\dm u(t)\dm_p$.   
\end{proof}
\begin{lemma}\label{TV}{\sl Let $(v,\pi_v)$  be the solution of Corollary\,\ref{CSP}, then the following estimates holds:
\be\label{TV-I}\dm v_t(t)\dm_q\leq c\dm \n v_0\dm_p t^{-\frac12+\frac n2\left(\frac1p-\frac1q\right)},\quad t>0\,,\ee where $c$ is a constant independent of $v$.}\end{lemma}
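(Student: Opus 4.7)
Following the splitting used throughout Sections\,\ref{US}--\ref{uS}, I would write $v=U+u$, where $U$ solves the Cauchy problem \rf{STC} on $\R^n$ with initial datum the trivial extension of $v_0$ by zero (so that $\dm \n U_0\dm_p=\dm \n v_0\dm_p$) and $u:=v-U$ solves the auxiliary IBVP \rf{STP}. Since $v_t=U_t+u_t$, the triangle inequality reduces the task to bounding $\dm U_t(t)\dm_q$ and $\dm u_t(t)\dm_q$ separately by $c\dm \n v_0\dm_p\,t^{-\frac12-\mu}$, uniformly in $t>0$.

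The Cauchy piece is immediate and uniform in the geometry: estimate \rf{STC-I}$_2$ of Lemma\,\ref{LSTC} yields $\dm U_t(t)\dm_q\leq c\,t^{-\frac12-\mu}\dm \n U_0\dm_p=c\,t^{-\frac12-\mu}\dm \n v_0\dm_p$ for every $t>0$, with no case distinction needed.

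For the corrector, I would split the time axis at $t=1$. On $(0,1]$, estimate \rf{DuL-I}$_2$ of Lemma\,\ref{DuL} gives exactly the desired bound on $\dm u_t(t)\dm_q$, for $\OO$ either bounded or exterior. For $t>1$ with $\OO$ exterior, Lemma\,\ref{utL} (estimate \rf{ut-I}) delivers the same bound directly. For $t>1$ with $\OO$ bounded, Lemma\,\ref{utL} is not directly applicable, so I would instead invoke the semigroup estimate \rf{JPA}$_3$ of Lemma\,\ref{LI} with $\sigma=t/2$ and $s=p$ to write $\dm v_t(t)\dm_q\leq c(t/2)^{-1-\mu}\dm v(t/2)\dm_p$, then use \rf{JPA}$_1$ together with the Poincar\'e inequality on the bounded domain to bound $\dm v(t/2)\dm_p\leq c\dm v_0\dm_p\leq c\dm \n v_0\dm_p$. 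Since $t^{-1-\mu}\leq t^{-\frac12-\mu}$ for $t\geq 1$, this yields the desired bound; alternatively, the exponentially decaying estimate \rf{BDC-I} of Corollary\,\ref{BDC} produces the same conclusion even more sharply.

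The only mild obstacle in this outline is the bounded-domain, large-time regime, since the trace/Green-identity machinery of Lemma\,\ref{utL} was tailored specifically to exterior geometry. This is bypassed above through classical Stokes semigroup theory on bounded domains (Lemma\,\ref{LI} or Corollary\,\ref{BDC}) together with Poincar\'e's inequality; assembling the three contributions by the triangle inequality then completes the proof.
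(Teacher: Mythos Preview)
Your argument is correct and is essentially the paper's own proof: decompose $v=U+u$, bound $\dm U_t(t)\dm_q$ by \rf{STC-I}$_2$, and bound $\dm u_t(t)\dm_q$ by Lemma\,\ref{DuL} for $t\in(0,1)$ and Lemma\,\ref{utL} for $t>1$. The paper compresses this into one sentence (and its citation of \rf{CCPI-I}$_2$ appears to be a slip for \rf{STC-I}$_2$), whereas you spell out the small-time/large-time split explicitly; your extra treatment of the bounded case via \rf{BDC-I} and Poincar\'e is fine but not actually needed here, since Lemma\,\ref{TV} is only invoked later for exterior $\OO$.
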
\begin{proof} Estimate \rf{TV-I} is an immediate consequence of estimates \rf{ut-I} and \rf{CCPI-I}$_2$.\end{proof}
\begin{lemma}\label{DuLv}{\sl Let $p\in(1,\infty)$,  and $q\geq p$. Set $\mu:=\frac n2\left(\frac1p-\frac1q\right)$, for solution $(v,\pi_v)$  furnished by Corollary\,\ref{CSP} enjoys  the following estimates hold:
\be\label{DuLv-I}\dm \n v(t)\dm_q \leq ct^{-\mu}\dm \nabla v_0\dm_p\,,\;\mbox{ for   }t\in(0,1)\,, \ee with $c$ independent of $v$\,.}\end{lemma}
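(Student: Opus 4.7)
The plan is to exploit the decomposition $v = U + u$ introduced at the start of Section \ref{uS}, where $U$ is the solution to the Stokes Cauchy problem \rf{STC} with data $v_0$ trivially extended to $\R^n$, and $u := v - U$ is the solution to the auxiliary problem \rf{STP}. By Minkowski's inequality
$$\dm \n v(t)\dm_q \leq \dm \n U(t)\dm_q + \dm \n u(t)\dm_q,$$
so it suffices to estimate each piece on $(0,1)$ with the desired bound $c\,t^{-\mu}\dm \n v_0\dm_p$.

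For the Cauchy part $U$, estimate \rf{STC-I}$_1$ of Lemma\,\ref{LSTC} applied with $\sigma=0$, $s=p$, $r=q$, yields directly
$$\dm \n U(t)\dm_q \leq c\,t^{-\mu}\dm \n v_0\dm_p, \quad t>0,$$
since $v_0 \in \mathscr C_0(\OO) \subset J_0^p(\R^n)$ after trivial extension. For the boundary-correction part $u$, Lemma\,\ref{DuL} has already proved precisely
$$\dm \n u(t)\dm_q \leq c\,t^{-\mu}\dm \n v_0\dm_p, \quad t \in (0,1),$$
as its estimate \rf{DuL-I}$_1$, under the same assumptions on $p,q$.

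Summing the two bounds gives \rf{DuLv-I} immediately. In short, the lemma is a direct corollary of Lemma\,\ref{LSTC} combined with Lemma\,\ref{DuL}; there is no substantive obstacle, since the whole technical difficulty (the Green-identity argument, the trace estimates of Lemma\,\ref{PT} and Corollary\,\ref{StTs}, the use of Lemma\,\ref{CLM} and the interpolation via Lemma\,\ref{ICM}) has already been carried out in the proof of Lemma\,\ref{DuL}. The only care needed is to note that $v_0 \in \mathscr C_0(\OO)$ guarantees $\n v_0 \in L^p(\OO)$ for every $p\in(1,\infty)$, so that the initial-data norm on the right-hand side is finite and the extension to $\R^n$ in the definition of $U$ is lawful.
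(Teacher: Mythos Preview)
Your proof is correct and follows exactly the paper's own argument: decompose $v=U+u$ and invoke Lemma\,\ref{LSTC} for $\nabla U$ together with Lemma\,\ref{DuL} for $\nabla u$. The paper's proof is the same one-line observation, without the additional commentary.
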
\begin{proof} We consider $v=U+u$. Hence the result is a consequence of Lemma\,\ref{LSTC} for $\n U$ and of Lemma\,\ref{DuL} for $\n u$.\end{proof} 
\begin{lemma}\label{Guv}{\sl Let $\OO$ be an  exterior domain and  $n\geq2$. Then for the solution $(v,\pi_v)$ furnished by Corollary\,\ref{CSP} enjoys the following estimates hold:
\be\label{Guv-I}  \dm \n v(t)\dm_q\leq c\dm \n v_0\dm_p\widehat g_p(t)\,,\mbox{ for }t>1,\mbox{ and }q\geq p\,, \ee where the constant  $c$ is independent of $v$ and $\widehat g_p(t)$ is defined by \be\label{SLMIVV}\widehat g_p(t):=\hskip-0.1cm\left\{\hskip-0.2cm\ba{lll}   t^{-\mu},&t\in(0,1)\,,&\null\mbox{if }q\geq p>1\,,
\\t^{\frac12-\mu},&t>1 \,,&\mbox{if }n=2\,,\,p\ne2\,,
\\\log^\frac32(t+e)\,,&t\geq1\,,& \mbox{if }q=p=n=3\,,\\\log (t+e)\,,&t\geq1\,, &\hskip-0.25cm\left\{\hskip-0.2cm\ba{l}\mbox{if }q>p=n=3\,,\\\mbox{if }q\geq p=n>3\,, \ea\right. \\ t^{-\mu},& t>0\,,&\mbox{if }q\in[p,n),n>2 \\ t^{\frac12-\frac n{2p}}\,,&t\geq1\,,&\mbox{if }q\geq p\ne n>3   \,   .  \ea\right.\ee  }\end{lemma}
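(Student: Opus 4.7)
The proof will proceed by the standard decomposition $v = U + u$, where $U$ is the solution of the Stokes Cauchy problem \rf{STC} with data $v_0$ trivially extended to $\R^n$ (as in Section \ref{US}) and $u := v - U$ solves the auxiliary problem \rf{STP}. Since both Lemma \ref{LSTC} and Lemma \ref{Gu} apply directly to $v_0 \in \mathscr C_0(\OO)$, the Minkowski inequality gives
\[
\dm \n v(t)\dm_q \leq \dm \n U(t)\dm_q + \dm \n u(t)\dm_q, \quad t > 1,
\]
and each term will be estimated by a previously established bound.

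First, I will apply Lemma \ref{LSTC}, specifically estimate \rf{STC-I}$_1$ with $\sigma = 0$ and $s = p$, to obtain
\[
\dm \n U(t)\dm_q \leq c\, t^{-\mu}\, \dm \n v_0\dm_p \quad \text{for all } t > 0.
\]
Next, I will invoke Lemma \ref{Gu}, whose conclusion \rf{Gu-I}–\rf{SLMIV} is already written with precisely the same target function $\widehat g_p$ that appears in \rf{SLMIVV} for $t \geq 1$. So for $\n u$ the target bound is already in hand.

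The only remaining verification is that the contribution $t^{-\mu}$ from $\n U$ is absorbed into $\widehat g_p(t)$ for $t > 1$ in every case of \rf{SLMIVV}. This is a routine case-by-case inspection:  for $n=2$, $p\ne2$, one has $t^{-\mu}\leq t^{\frac12-\mu}$ for $t\geq 1$; for the logarithmic cases $\mu=0$ and $t^{-\mu}=1\leq\log(t+e)$ (or $\log^{3/2}(t+e)$); for $q\in[p,n)$ with $n>2$, $\widehat g_p(t)=t^{-\mu}$ matches exactly; and for $q\geq p\ne n>3$, one checks $\tfrac12-\tfrac n{2p}\geq -\mu$ is equivalent to $q\geq n$, which holds throughout that regime. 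Hence the sum is dominated by $c\,\dm \n v_0\dm_p\,\widehat g_p(t)$.

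I expect no serious obstacle: the entire technical content has been absorbed into Lemmas \ref{LSTC} and \ref{Gu}, and the present statement is essentially their sum for $t>1$. The only mildly delicate point is the dominance comparison between $t^{-\mu}$ and $\widehat g_p(t)$ in the mixed-regime entries of \rf{SLMIVV}, which as indicated above reduces in each case to an elementary exponent inequality.
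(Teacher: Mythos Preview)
Your proposal is correct and follows exactly the paper's own approach: the paper's proof consists of the single line ``We consider $v=U+u$. Hence the result is a consequence of Lemma\,\ref{LSTC} for $\n U$ and Lemma\,\ref{Gu} for $\n u$.'' Your additional case-by-case check that $t^{-\mu}\leq \widehat g_p(t)$ for $t\geq1$ merely spells out what the paper leaves implicit (with the minor caveat that in the logarithmic cases one has $\mu\geq0$ rather than $\mu=0$, but $t^{-\mu}\leq1$ still holds).
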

\begin{proof}
We consider $v=U+u$. Hence the result is a consequence of Lemma\,\ref{LSTC} for $\n U$ and Lemma\,\ref{Gu} for $\n u$.
\end{proof}
 \begin{lemma}\label{LUSLM}{\sl Let $\OO$ be an exterior domain and $n=2$.   Then, for $q\geq2$ the solution $(v,\pi_v)$ of Corollary\,\ref{CSP} is such that
\be\label{USLM-O}\dm \n v(t)\dm_q\leq \widehat g_2(t)\dm \n v_0\dm_2,\;t>1,\quad \widehat g_2(t):=\left\{\hskip-0.2cm\ba{ll}1&\mbox{if }q=2\,\VS c>1&\mbox{if }q>2\,,\ea\right.\ee where $c$ is a constant independent of     $v$.
  }\end{lemma}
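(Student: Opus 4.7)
The plan is to split the statement into the two regimes $q=2$ and $q>2$: the first is delivered by the classical energy identity, while the second follows from the first via a shifted-time application of the $L^p$-$L^q$ estimate of Lemma\,\ref{DuLv}.

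First I would establish the monotonicity $\dm\n v(t)\dm_2\le\dm\n v_0\dm_2$ for every $t>0$. Since $v_0\in\mathscr C_0(\OO)$, Corollary\,\ref{CSP} guarantees that $v\in W^{2,2}(\OO)$ and $v_t\in L^2(\OO)$ uniformly for $t$ in compact subsets of $(0,T)$, so the $L^2$ inner product of the Stokes equation $v_t-\Delta v+\n\pi_v=0$ with $v_t$ is well-defined. The divergence-free condition $\n\cdot v_t=0$ together with $v_t|_{\po}=0$ (inherited from $v|_{\po}=0$ for all $t$) makes the pressure term vanish, and integrating the Laplacian term by parts produces $\frac12\frac{d}{dt}\dm\n v(t)\dm_2^2$, the boundary contribution at infinity being controlled by the integrability properties of $v$. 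The identity
\[
\dm v_t(t)\dm_2^2+\frac12\frac{d}{dt}\dm\n v(t)\dm_2^2=0,\qquad t>0,
\]
follows, and integration in time combined with the continuity at $t=0$ from Lemma\,\ref{CGLP} yields the desired bound with $\widehat g_2=1$.

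For $q>2$ I would exploit the freedom to shift the initial time in Lemma\,\ref{DuLv}. Given $t>1$, set $\tau:=t-\tfrac12$. Since the Stokes system is autonomous, the map $s\mapsto v(\tau+s)$ is a Stokes solution with datum $v(\tau)\in J^2_0(\OO)$. Lemma\,\ref{DuLv} applied with $p=2$ at the shifted time (together with a density argument to accommodate the non-$\mathscr C_0$ datum $v(\tau)$) gives
\[
\dm\n v(t)\dm_q\le c\,\big(\tfrac12\big)^{-\mu}\,\dm\n v(\tau)\dm_2\le c\,2^\mu\,\dm\n v_0\dm_2,
\]
where the last inequality uses the monotonicity proved in the previous step. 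Since $\mu=\tfrac12-\tfrac1q>0$ for $q>2$, this supplies the desired constant $c>1$.

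The main technical obstacle is the justification of the shifted-time application of Lemma\,\ref{DuLv}, whose statement is for $\mathscr C_0$ data while $v(\tau)$ is merely in $J^2_0(\OO)$. This must be handled by approximation: select $\{w_0^{(k)}\}\subset\mathscr C_0(\OO)$ with $\dm\n(w_0^{(k)}-v(\tau))\dm_2\to 0$, apply the estimate to each $w_0^{(k)}$, and pass to the limit using the linearity of the problem, uniqueness of Stokes solutions (Lemma\,\ref{LI}), and the $v_0$-independence of the constants in Lemma\,\ref{DuLv}. A minor subsidiary point is verifying that the energy identity above is rigorous at the ends of the boundary $\po$ and at infinity; this is standard under the regularity provided by Corollary\,\ref{CSP}.
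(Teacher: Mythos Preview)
Your proof is correct, but the route for $q>2$ differs from the paper's. Both arguments start from the $L^2$ energy monotonicity $\dm\n v(t)\dm_2\le\dm\n v_0\dm_2$, which the paper simply cites as ``the $L^2$-theory'' and you derive from the identity obtained by testing with $v_t$. For $q>2$, however, the paper does not shift time: it bounds $\dm D^2v(t)\dm_2$ through Lemma\,\ref{AES} (with the Poincar\'e inequality on $\OO'$) and the $v_t$ estimate of Lemma\,\ref{TV}, obtaining $\dm D^2v(t)\dm_2\le c\dm\n v_0\dm_2(1+t^{-1/2})$ for $t>1$, and then interpolates via the exterior Gagliardo--Nirenberg inequality (Lemma\,\ref{ICM}),
\[
\dm\n v(t)\dm_q\le c\,\dm D^2v(t)\dm_2^{a}\,\dm\n v(t)\dm_2^{1-a},\qquad a=\tfrac{q-2}{q},
\]
to reach the constant bound. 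Your approach bypasses the second-derivative estimate and Lemma\,\ref{ICM} entirely, trading them for a single application of the smoothing bound \rf{DuLv-I} at the shifted time $\tau=t-\tfrac12$; this is shorter, but requires the density step you flag.

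On that density step: convergence of $\n w_0^{(k)}$ to $\n v(\tau)$ in $L^2$ alone is delicate here because for $n=2$ the seminorm $\dm\n\cdot\dm_2$ is not a norm on $J_0^2(\OO)$, so identifying the limit with $v(\tau+\cdot)$ needs more. The clean fix is available: since $v_0\in\mathscr C_0(\OO)$, Corollary\,\ref{CSP} gives $v(\tau)\in J^{1,2}(\OO)$, so you can approximate in the full $J^{1,2}$ norm and use \rf{JPA}$_2$ (or Lemma\,\ref{DuLv} itself on the differences) to pass to the limit in $\dm\n\cdot\dm_q$. With that adjustment the argument is complete.
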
 \begin{proof}  In order to prove \rf{USLM-O}, by virtue of Lemma\,\ref{AES},   
employing the Poincar\é inequality, we easily get
$$\dm D^2 v(t)\dm_2\leq c(\dm v_t(t)\dm_2+\dm  v(t)\dm_{L^2(\OO')})\leq  c(\dm v_t(t)\dm_2+\dm \n v(t)\dm_2).$$ Since the $L^2$-theory ensures  $\dm \n v(t)\dm_2\leq \dm \n v_0\dm_2,t>0$, for all $v_0\in \mathscr C_0(\OO)$, that is \rf{USLM} with $\widehat g_2=1$,   employing \rf{TV-I}, we arrive at 
\be\label{FDSD}\dm D^2v(t)\dm_2\leq c\dm \n v_0\dm_2(1+t^{-\frac12})\,,t>1\,.\ee 
By virtue of Lemma\,\ref{ICM} we get
$$\dm \n v(t)\dm_q\leq c\dm D^2v(t)\dm_2^a\dm \n v(t)\dm_2^{1-a}\,,\;t>1\,,\;\mbox{$a:=\frac{q-2}q$}.$$
Employing \rf{FDSD}, and employing again $\dm \n v(t)\dm_2\leq\dm\n v_0\dm_2,$ we conclude the proof with $\widehat g_2\equiv c$.
\end{proof}\begin{lemma}\label{UUSLM}{\sl 
Let $\OO$ be an exterior domain and $n>2$. In \rf{STI} assume   $v_0\in\mathscr C_0(\OO)$. Then, for $q\geq n,\,p\in(1,n)$ the solution $(v,\pi_v)$ of Corollary\,\ref{CSP} is such that
\be\label{USLM}\dm \n v(t)\dm_q\leq c\widehat g_p(t)\dm \n v_0\dm_p,\;t\!\geq\!1,\; \widehat g_p(t):=\!\left\{\hskip-0.2cm\ba{ll} t^{-\frac n2\left(\frac1p-\frac1n\right)},\hskip-0.2cm&\mbox{if }q>3\,,\VS t^{\frac14-\frac 3{4p}-\frac\vartheta2},&\mbox{if }q=3\,,\ea\right.\ee where $c$ is a constant independent of $v$.
  }\end{lemma}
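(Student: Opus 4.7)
The plan is to follow the decomposition strategy used in Lemmas \ref{Gu}, \ref{Guv} and \ref{LUSLM}, writing $v = U + u$ where $U$ solves the Stokes Cauchy problem \rf{STC} on $\R^n$ (with $v_0$ extended by zero) and $u := v - U$ solves the auxiliary IBVP \rf{STP}. The fourth branch of $\widehat g_p$ in Lemma \ref{Guv} ($q\geq p\ne n>3$) already produces the rate $t^{1/2 - n/(2p)}$, which is exactly the target for $q > 3$ when $n \geq 4$; hence the essential new content is the case $n = 3$, for which Lemma \ref{Guv} is silent when $q \geq n = 3$ and $p < 3$, and which is also the only case in which the second branch ($q = 3$) of $\widehat g_p$ in \rf{USLM} arises.

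For $\n U$ one simply invokes Lemma \ref{LSTC}: since $\mu := \frac n2(\frac1p - \frac1q) \geq \frac{n}{2p} - \frac12$ for $q \geq n$, the bound $\dm\n U(t)\dm_q \leq c t^{-\mu}\dm\n v_0\dm_p$ is dominated by $\widehat g_p(t)\dm\n v_0\dm_p$ in both branches for $t \geq 1$; in particular, when $q = n = 3$ one has $-\mu = \tfrac12 - \tfrac{3}{2p}$, which is strictly smaller than $\tfrac14 - \tfrac{3}{4p} - \tfrac{\vartheta}{2}$ whenever $\vartheta < \tfrac32(\tfrac1p - \tfrac13)$.

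For $\n u$ the key ingredient is the exterior-domain Gagliardo--Nirenberg inequality of Lemma \ref{ICM}, applied in the symmetric form
\[
\dm\n u(t)\dm_q \leq c\,\dm D^2 u(t)\dm_q^{1/2}\,\dm u(t)\dm_q^{1/2}\,.
\]
The $L^q$-norm of $u$ is already controlled by Lemma \ref{EFED}: it gives $\dm u(t)\dm_q \leq c t^{1/2 - 3/(2p)}\dm\n v_0\dm_p$ when $q > 3$, and $\dm u(t)\dm_3 \leq c t^{-\vartheta}\dm\n v_0\dm_p$ with $\vartheta \in (0,\tfrac32(\tfrac1p - \tfrac13))$ when $q = n = 3$. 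For $\dm D^2 u(t)\dm_q$ the plan is to apply the regularity estimate \rf{SAE} of Lemma \ref{AES} to $v$, giving $\dm D^2 v(t)\dm_q \leq c(\dm v_t(t)\dm_q + \dm v(t)\dm_{L^q(\Omega')})$, then bound $\dm v_t(t)\dm_q$ by Lemma \ref{TV} at the rate $t^{-1/2 - \mu}$, and finally control the local term via $v = U + u$ using $\dm U(t)\dm_{L^\infty(\Omega\cap B_R)} \leq c t^{1/2 - 3/(2p)}\dm\n v_0\dm_p$ from Corollary \ref{CCP} together with $\dm u(t)\dm_{L^{q_1}(\Omega')} \leq \dm u(t)\dm_{q_1}$ from Lemma \ref{EFED} for a $q_1 > 3$ (allowed because $\Omega'$ is bounded); subtracting the Cauchy bound of Lemma \ref{LSTC} for $\dm D^2 U(t)\dm_q$ then yields $\dm D^2 u(t)\dm_q \leq c t^{1/2 - 3/(2p)}\dm\n v_0\dm_p$ for $t \geq 1$.

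Substituting these bounds into the interpolation yields $\dm\n u(t)\dm_q \leq c t^{1/2 - 3/(2p)}\dm\n v_0\dm_p$ when $q > 3$ and $\dm\n u(t)\dm_3 \leq c t^{1/4 - 3/(4p) - \vartheta/2}\dm\n v_0\dm_p$ when $q = 3$; combining with the $\n U$-bound produces \rf{USLM}. The point requiring the most care is the $q = 3$ case: because $\dm u(t)\dm_3$ actually \emph{decays} like $t^{-\vartheta}$ while $\dm v(t)\dm_{L^3(\Omega')}$ is forced to grow with $U$ like $t^{1/2 - 3/(2p)}$, one cannot close the $D^2 u$-estimate by using the $L^3$-norm of $u$; the boundedness of $\Omega'$ must be exploited to upgrade the local control to a larger $L^{q_1}$-norm with $q_1 > 3$ (which is the fourth branch of Lemma \ref{EFED}), so that the $D^2 u$-rate stays at $t^{1/2 - 3/(2p)}$ and the square-root interpolation produces precisely the exponent $\tfrac14 - \tfrac{3}{4p} - \tfrac{\vartheta}{2}$ of the lemma.
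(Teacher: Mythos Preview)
Your proposal is correct and follows essentially the same route as the paper: decompose $v=U+u$, control $\nabla U$ by Lemma~\ref{LSTC}, bound $\dm D^2u\dm_q$ via \rf{SAE} (this is exactly the paper's inequality \rf{UDSQ}, with the local $u$-term taken in $L^r$, $r>q$), and then interpolate $\dm\nabla u\dm_q\le c\dm D^2u\dm_q^{1/2}\dm u\dm_q^{1/2}$ using the appropriate branch of Lemma~\ref{EFED}. One small inaccuracy in your explanation: for $p\in(1,3)$ the quantity $t^{1/2-3/(2p)}$ does not \emph{grow}, it decays; the point is rather that $t^{-\vartheta}$ decays \emph{more slowly} than $t^{1/2-3/(2p)}$ (since $\vartheta<\tfrac{3}{2p}-\tfrac12$), so feeding $\dm u\dm_3$ into the local term of \rf{UDSQ} would spoil the $D^2u$-rate and, after interpolation, yield only $t^{-\vartheta}$ instead of the sharper $t^{1/4-3/(4p)-\vartheta/2}$. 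Your remedy---taking $q_1>3$ in the local term so that \rf{uQ}$_4$ applies---is exactly what the paper does.
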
\begin{proof}     We look for an estimate for $\dm\n u(t)\dm_q$. In our hypotheses we can deduce again \rf{DSSLM}, that is, for all $r>q>p$, 
\be\label{UDSQ}\dm D^2u(t)\dm _q\leq \dm D^2U(t)\dm_q+ c(\dm v_t(t)\dm_q+\dm U(t)\dm_{L^\infty(\OO')}+\dm u(t)\dm_{L^r(\OO')})\,.\ee  
By virtue of estimates \rf{TV-I}, \rf{CCP-I}$_1$ and \rf{uQ}$_4$, we get
\be\label{DDSSLM}\dm D^2u(t)\dm_n \leq c\dm \n v_0\dm_pt^{\frac12-\frac n{2p}} ,\;t>1\,.\ee Since estimate \rf{ICM}, furnishes $\dm \n u\dm_q\leq c\dm D^2 u(t)\dm_q^\frac12\dm  u(t)\dm_q^\frac12$,  applying \rf{DDSSLM} and \rf{uQ}$_3$ for $q=n>3$ and \rf{uQ}$_5$ for $q=n=3$, we get
$$\dm  \n u(t)\dm_n\leq c \dm  \n v_0 \dm_p\left\{\hskip-0.2cm\ba{ll}t^{\frac 12-\frac n{2p}},&\mbox{if }q=n>3,\;t>1\,,\VS t^{\frac14-\frac n{4p}-\frac\vartheta2},&\mbox{if }q=n=3,\;t>1\,,\ea\right.$$
moreover, applying \rf{DDSSLM} and \rf{uQ}$_4,$ for $q>n$, we get
$$\dm  \n u(t)\dm_q\leq c \dm  \n v_0 \dm_pt^{\frac 12-\frac n{2p}},\mbox{ if }n\geq3,\;t>1\,, $$
Since $v=U+u$, recalling \rf{STC-I}$_1$, we deduce the result \rf{USLM} via the  Minkowski inequality. 
   \end{proof}
\section{\label{PCT}Proof of Theorem\,\ref{CT}, Theorem\,\ref{CTE} and Proposition\,\ref{CCT}}
\subsection{{\it Proof of Theorem\,\ref{CT}}.}\par 
Let $v_0\in J_0^p(\OO)$. Since $\OO$ is bounded, $v_0\in J^{1,p}(\OO)$. Moreover, there exists a sequence  $\{v_0^k\}\subset \mathscr C_0(\OO)$ which converges to $v_0$ in $J^{1,p}(\OO)$. We denote by $(v_k,\pi_{v_k})$ the sequence of solutions  ensured by Lemma\,\ref{LI} and enjoying  the estimates \rf{BDC-I}. We consider the decomposition $v^k:=U^k+u^k$. Using the linearity of the Stokes problem, by virtue of estimate \rf{STC-I}$_1$ for $\{U^k\}$ and estimate  \rf{DuL-I}$_1$ for $\{u^k\}$,  we get $\dm \n v^p(t)-\n v^m(t)\dm_q\leq ct^{-\mu}\dm \n v_0^p-\n v_0^m\dm _p$, for $t\in (0,1)$. For $t\geq1$ we employ estimates \rf{BDC-I},  hence $\dm \n v^k(t)-\n v^m(t)\dm_q\leq ct^{-\frac n2\left(\frac1p-\frac1q\right)}\exp[-\gamma t]\dm v_0^k- v_0^m\dm_p$, that we increase via the Poincar\`{e} inequality. An analogous argument is developed   in the case of $\{v_t^k\}$.  We employ \rf{STC-I}$_2$ for $\{U_t^k\}$ and estimate   \rf{DuL-I}$_2$ for $\{u_t^k\}$, provided that $t\in(0,1)$. In the case of $t\geq 1$, we employ \rf{BDC-I}  for $\{u_t^k\}$. Hence for the linearity of the problem \rf{STI} we get that $\dm v_t^k-v_t^m\dm_q \leq ct^{-\frac12-\mu}\dm \n v^k_0-\n v^m_0\dm_p$. For $\{D^2v^k\}$ and $\{\pi_{v^k}\}$ we employ Lemma\,\ref{AES}. Then, for all $t>0$, the sequence $\{v^k\}$ enjoys  the Cauchy condition. We denote by $(v,\pi_v)$ the limit.    Since for $q=p$ the above Cauchy conditions for $\{\n v^k\}$ are uniform with respect to $t$, and by virtue of  Lemma\,\ref{CGLP}   $\{v^k\}\subset C([0,T);J_0^p(\OO))$ with $\lim_{t\to0}\dm \n v^k(t)-\n v^k_0\dm_p=0$, we get   that the limit  $v\in C([0,T);J_0^p(\OO))$ and $\lim_{t\to0}\dm \n v(t)-\n v_0\dm_p=0 $. Therefore     the limit $(v,\pi_v) $ of the sequence $\{v^k\}$   enjoys the estimates \rf{CT-oi}-\rf{CT-ii}. The uniqueness holds as in the case of the usual $L^p$-theory. \chiu
\subsection{{\it Proof of Theorem\,\ref{CTE}}.}\par {\sc Existence}. 
  Let $v_0\in J^p_0(\OO)$. We denote by $\{v_0^k\}\subset \mathscr C_0(\OO)$ a sequence converging to $v_0$ in $J^p_0(\OO)$. By virtue of Corollary\,\ref{CSP}, we denote by $\{(v^k,\pi_{v^k})\}$ the sequence of solutions to problem \rf{STI}. We also set $v^k:=U^k+u^k$ and $\pi_{v^k}:=\pi_{u^k}$. Hence, by virtue of the linearity of problem \rf{STI},   employing  Lemma\,\ref{DuLv}, Lemma\,\ref{Guv}, Lemma\,\ref{LUSLM} and Lemma\,\ref{UUSLM}, we get      $$\dm \n v^k(t)-\!\n v^m(t)\dm_q\leq g_p(t)\dm \n v_0^k-\n v^m_0\dm_p,\mbox{ for all }k,m\in \N\mbox{ and }t>0\,,$$ where, collecting the estimates given in Lemma\,\ref{DuLv}, Lemma\,\ref{Guv}, Lemma\,\ref{LUSLM} and Lemma\,\ref{UUSLM},  
we tacitly defined $g_p(t)$ as made in the statement of Theorem\,\ref{CT}.  Moreover, by virtue of Lemma\,\ref{TV}, for $\mu:=\frac n2(\frac 1p-\frac1q)$, we get $$\dm v^k_t(t)-v^m_t(t)\dm_q\leq ct^{-\frac12-\mu}\dm \n v_0^k-\n v_0^m\dm_p,\mbox{ for all }k,m\in \N\mbox{ and }t>0\,.$$ Finally, employing Lemma\,\ref{AES}, via the above estimates for $v^k-v^m$ and the Poincar\é inequality, we get$$ \dm D^{ 2} v^k(t)-\!D^{ 2} v^m(t)\dm_q+\dm\n \pi_{v^k}-\n\pi_{v^m}\dm_q\!\leq cg_p(t)\dm \n v_0^k\!-\!\n v^m_0\dm_p,\mbox{ for all }k,m\!\in\! \N\mbox{ and }t>0\,.$$
  Since the right  hand side of the above estimates satisfies the Cauchy condition in $J_0^p(\OO)$, we get the existence of  strong limit  $(v,\pi_v)$ solutions to problems \rf{STI}.    Since for $q=p$ the above Cauchy conditions for $\{\n v^k\}$ are uniform with respect to $t$ on any compact interval $[0,T]$, as proved  in the case of $\OO$ bounded, we get that the limit $v\in C([0,T);J_0^p(\OO))$ and $v(t,x)$ assume the initial data $v_0(x)$ by continuity in the norm of $J_0^p(\OO)$. The pair $(v,\pi_v)$   is a solution to problem \rf{STI} and enjoys property \rf{CT-i}-\rf{CT-iii},   the proof of the existence is completed. 
\par {\sc Uniqueness}.  We prove that in the class of existence for $v_0$ the uniquee solution is identically equal to 0.   Since $\nabla v\in C([0,T;J_0^p(\OO))$, for all $t>0$ and $R>0$, via the Poincar\é inequality, we get $$v\in C([0,T);L^p(\OO\cap B_R))\,.$$ Since $v_t\in L^1(0,T;L^p(\OO))$, $v=0$ in $t=0$, for all $t>0$ the following also holds:
$$\dm v(t)\dm_{L^p(\OO\cap B_R)}\leq \intll0t\dm v_\tau(\tau)\dm_{L^p(\OO\cap B_R)}d\tau\leq \intll 0t\dm v_\tau(\tau)\dm_pd\tau\,.$$ The last inequality holds uniformly in $R>0$. Hence letting $R\to\infty$, we get  $v\in L^\infty(0,T;L^p(\OO))$. Now, the uniqueness follows by the one of the usual $L^q$-theory.\chiu  \subsection{\it Proof of Proposition\,\ref{CCT}.} \hskip0.53cm  
We start proving point i.\,. We can assume $v_0\in\mathscr C_0(\OO)$. We employ the optimality already known for $\mu_1$ in \rf{JPA}. That is, we verify that if \rf{CT-vi} holds, then \rf{OPT-II} also is true. Hence we arrive at a contradiction. In the case of  \rf{CT-ivb}$_{2,3}$, assume  $q\geq n$ and $p\in[\frac n2,n)$. Then, under assumption \rf{CT-vi}$_1$, recalling \rf{JPA}$_2$, we get \be\label{IPR-I}\ba{ll}\dm \n v(t)\dm_q\leq \mbox{$\xi(\frac t2)t^{\frac12-\frac n{2p} }\dm \n v(\frac t2)\dm_p$}\hskip-0.2cm&\leq c\xi(\frac t2)t^{ -\frac n{2p}-\mu}\dm v_0\dm_\frac n2 \vspace{3pt}\\&=c\xi(\frac t2)t^{ -1}\dm v_0\dm_\frac n2 ,\;t>\max\{2,t_0\}\,,\ea\ee that is \rf{OPT-II}.   Now let us consider the case of $q\in[p,n)$. The argument is similar. Assume that \rf{CT-vi}$_1$ holds for $q\in [p,n)\,,\, p\geq\frac n2$.  Then,    for $r\geq n$, via \rf{CT-ivb}$_3$ and \rf{JPA}$_2$,  we also get 
$$\dm\n v(t)\dm_r\!\leq ct^{\frac 12-\frac n{2q}}\dm \n v(\mbox{$\frac t2)\dm_q \! \leq c\xi(\frac t4)t^{\frac12-\frac n{2p}}\dm \n v(\frac t4 )\dm_p\! \leq \!c\xi(\frac t4$})t^{-1}\dm   v_0 \dm_\frac n2\,,\;t>\!\max\{4,t_0\}\,,$$ that is \rf{IPR-I}, which is false.  Considering the properties \rf{JPA} of $v$ in the case of $n=3$, one achieves the proof of  \rf{CT-ivb}$_4$ by repeating the same arguments. Actually, assuming that \rf{CT-vi}$_2$ holds for some $p\in[\frac n2,n)$, and $v_0\in\mathscr C_0(\OO)$, by virtue of \rf{JPA}$_2$, we get the following estimate:$$\dm\n v(t)\dm_3\leq \xi(\mbox{$\frac t2$})t^{\frac 14-\frac 3{4p}-\frac\theta2}\dm\n v(\mbox{$\frac t2$})\dm_p\leq \xi(\mbox{$\frac t2$})t^{-\frac54+\frac3{4p}-\frac\theta2}\dm v_0\dm_{\frac n2}\leq \xi(\mbox{$\frac t2$})t^{-1+\frac\delta2}\dm v_0\dm_{\frac n2},$$ where in the last step we set $\theta=\frac 32\big[\frac1p-\frac13\big]-\delta>0$\,. So that, by the assumption \rf{CT-vi}$_2$, for all $\delta\in(0,\frac32\big[\frac1p-\frac13\big])$ we find $\ov\xi(t):=\xi(\frac t2)t^{-\frac\delta2}$ such that \rf{OPT-II} holds for $q=3$ and $s=\frac n2$,  that is an {\it absurdum}.
 To prove point ii. of the proposition it is enough to consider a solution ensured by Theorem\,\ref{SSTPT}. Such a solution solves problem \rf{STI} and satisfies estimate \rf{CT-iii}
with $g_p(t)$ constant $\geq1$\,. \chiu
{\small
}
\end{document}